\def\ind{{\lim\limits_{\to}}}
\def\ss{\textbf{ss}}
\def\ben{{\mathfrak{b}}}
\def\gen{{\mathfrak{g}}}
\def\vvec{{\textbf{vec}}}
\def\hen{{\mathfrak{h}}}
\def\pen{{\mathfrak{p}}}
\def\nen{{\mathfrak{n}}}
\def\Ec{{\mathcal E}}
\def\Oc{{\mathcal O}}
\def\dim{\mathrm{dim}}
\def\End{\mathrm{End}}
\def\ad{{\mathrm{ad}}}
\def\Z{\mathbb{Z}}
\def\E{{X}}
\def\H{\mathbf{H}}
\def\Cb{\mathbf{C}}
\def\Ab{\mathbf{A}}
\def\Vb{\mathbf{C}'}
\def\D{\mathbf{D}}
\def\N{\mathbb{N}}
\def\lto{\longrightarrow}
\def\a{\alpha}
\def\b{\beta}
\def\qlb{\overline{\mathbb{Q}}_l}
\def\CC{\mathbb{C}}
\def\UU{\mathbf{U}}
\def\ZZ{\mathbb{Z}}
\def\QQ{\mathbb{Q}}
\def\x{\mathbf{x}}
\def\fqb{\overline{\mathbb{F}_q}}
\def\fq{\mathbb{F}_q}
\def\mod{{\text{mod}}}
\def\on{{\ \text{on}\ }}
\def\Sum{{\rm{\sum}}}
\newtheorem{theo}{\bf{Theorem}}[section]
\newtheorem{lem}[theo]{Lemma}
\newtheorem{cor}[theo]{Corollary}
\newtheorem*{coro}{Corollary}
\newtheorem{prop}[theo]{Proposition}
\newtheorem{conj}[theo]{Conjecture}
\newtheorem{theore}{\bf{Theorem}}
\numberwithin{equation}{section}
\title{Hall algebras of curves, commuting varieties and Langlands duality}
\author{O. Schiffmann, E. Vasserot}
\begin{document}

\begin{abstract} We construct an isomorphism between the (universal) spherical Hall algebra of a smooth projective curve
of genus $g$ and a convolution algebra in the (equivariant) 
$K$-theory of the genus $g$ commuting varieties
$C_{\mathfrak{gl}_r}=\big\{ (x_i, y_i) \in 
\mathfrak{gl}_r^{2g}\;;\; \sum_{i=1}^g [x_i,y_i]=0\big\}$. 
We can view this isomorphism as a version 
of the geometric Langlands duality in the formal neighborhood of 
the trivial local system, for the group $GL_r$. 
We extend this to all reductive groups and we compute the image, 
under our correspondence, of the skyscraper sheaf 
supported on the trivial local system.

\end{abstract}

\maketitle

\tableofcontents

\section{Introduction}

\vspace{.1in}

\paragraph{\textbf{0.1}} Let $X$ be a smooth connected projective curve of genus
$g$ defined over a finite field $\mathbb{F}_q$. Let $Bun_{r}X$ stand for the 
set of all (isomorphism classes of) vector bundles of rank $r$ over $X$. 
Consider the vector space 
$$\H_{Vec(X)}=\bigoplus_{r \geq 1} Fun(Bun_{r}X, \CC),$$ where 
$Fun( \cdot, \CC)$ denotes the set of complex valued functions with finite 
support. Let $P \subset GL_r$ be a parabolic subgroup with Levi factor 
$L \simeq GL_s \times GL_t$. The convolution diagram
\begin{equation}\label{E:intro1}
\xymatrix{ & Bun_PX \ar[dr]^-{p} \ar[ld]_-{q} &\\
Bun_sX \times Bun_tX & & Bun_{r}X}
\end{equation}
equips $\H_{Vec(X)}$ with an associative product and a coassociative coproduct.
Now let $Coh_0X$ stand for the set of all torsion coherent sheaves on $X$. 
A convolution diagram similar to (\ref{E:intro1}) allows one to equip 
$H_{Tor(X)}=Fun(Coh_0X, \CC)$ with the structure of an algebra 
(actually a Hopf algebra). This algebra acts on $\H_{Vec(X)}$. 
This construction has a natural interpretation in the theory of automorphic 
forms over function fields : $\H_{Vec(X)}$ is the direct sum (over $r$) of the 
spaces of unramified automorphic forms for the groups $GL_r$ over the ring of 
ad\`eles of $X$; the product and coproduct in $\H_{Vec(X)}$ correspond to 
parabolic induction and constant term maps respectively; 
finally, $H_{Tor(X)}$ is the algebra of (unramified) Hecke operators acting on 
the above automorphic forms.

In \cite{Kap}, Kapranov initiated the systematic study of $\H_{Vec(X)}$, using 
the language of Hall algebras. The algebra $\H_{Vec(X)}$ is 
generated as by
all cuspidal functions; Kapranov translated the functional equations 
satisfied by Eisenstein series associated to such cuspidal functions 
(or pairs of cuspidal functions) into commutation relations between the 
corresponding generators. These commutation relations bear a strong 
ressemblance with those appearing in Drinfeld's new realization of quantum 
affine algebras. In fact, Kapranov fully determined $\H_{Vec(X)}$ when 
$X=\mathbb{P}^1$ and identified it with the quantum group 
$\mathbf{U}^+_{v}(\widehat{\mathfrak{sl}}_2)$, where $v=q^{-1/2}$. 
The algebra $\H_{Vec(X)}$ is also explicitly described when $X$ is an 
elliptic curve, see \cite{BS}, \cite{Fratila}.
In this case it is related to double affine Hecke algebras, see \cite{SV1}.

\vspace{.2in}

\paragraph{\textbf{0.2}} In this paper, we let $X$ be a curve of arbitrary genus, but we restrict our attention to a subalgebra of $\H_{Vec(X)}$ generated by cuspidal functions \textit{of rank one}. More precisely, we define the \textit{spherical Hall algebra} $\mathbf{U}^>_X$ to be the subalgebra of $\H_{Vec(X)}$ generated by the characteristic functions $1_{Pic^dX}$ for $d \in \Z$. Our motivation here is to understand the
structure of this algebra, as well as its place in the Langlands program.

Our first result gives a combinatorial realization of $\UU^>_X$ in terms of \textit{shuffle algebras}. Let $\alpha_1, \overline{\alpha_1}, \ldots, \alpha_g, \overline{\alpha_g}$ be the Weil numbers of $X$ and set 
$$g_X(z)=z^{g-1}
\frac{1-qz}{1-z^{-1}}\prod_{i=1}^g (1-\a_iz^{-1})(1-\bar\a_iz^{-1}).$$
Let $\CC(x_1, \ldots, x_r)^{ \mathfrak{S}_r}$ stand for the space of symmetric rational functions in $r$ variables. We equip the graded space
$\mathbf{V}=\CC 1 \oplus \bigoplus_{r \geq 1} \CC(x_1, \ldots, x_r)^{ \mathfrak{S}_r}$ with the shuffle product
\begin{equation}\label{E:intro2}
P(x_1, \ldots, x_r) \star Q(x_1, \ldots, x_{s})=\sum_{w \in
Sh_{r,s}} w  \bigg( \hspace{-.1in}\prod_{\substack{1 \leqslant i
\leqslant r\\ r+1 \leqslant j \leqslant r+s}}
\hspace{-.15in}g_X(x_i/x_j) P(x_1, \ldots, x_r) Q(x_{r+1},
\ldots, x_{r+s})\bigg)
\end{equation}
where $Sh_{r,s} \subset \mathfrak{S}_{r+s}$ stands for the set of all $(r,s)$ shuffles on $r+s$ letters. Let $\mathbf{A}_{g_X(z)}$ be the subalgebra of $\mathbf{V}$
generated by $\CC[x_1^{\pm 1}] \subset \mathbf{V}_1$. It is a graded subalgebra
$\mathbf{A}_{g_X(z)} =\bigoplus_{ r \geq 0} \mathbf{A}_r$. Shuffle algebras of 
the above kind associated to rational functions have been studied in particular
by B.~Feigin and his collaborators, see e.g. \cite{FO}.

\vspace{.1in}

\begin{theore} The assignement ${1}_{Pic^lX} \mapsto x^l_1$ 
extends to an algebra isomorphism
$$\Upsilon_{\E}:\UU^>_{\E}\stackrel{\sim}{\longrightarrow}\Ab_{g_X(z)}.$$
\end{theore}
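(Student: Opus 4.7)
The plan is to construct $\Upsilon_X$ from the iterated Hall coproduct and then check that it is an algebra isomorphism onto $\Ab_{g_X(z)}$. For $f \in \UU^>_X[r]$, the iterated parabolic restriction to the Borel subgroup gives an element of $\UU^>_X[1]^{\otimes r}$. Identifying $\UU^>_X[1]$ with $\CC[x_1^{\pm 1}]$ via $1_{Pic^dX} \mapsto x_1^d$, this yields a map $\UU^>_X[r] \to \CC[x_1^{\pm 1}, \ldots, x_r^{\pm 1}]$, which I would argue lands in the symmetric subspace and assembles into $\Upsilon_X$. Surjectivity is then automatic, since both $\UU^>_X$ and $\Ab_{g_X(z)}$ are generated by their rank-$1$ parts, on which $\Upsilon_X$ is the prescribed identification.

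The essential point is multiplicativity, and by Green's twisted bialgebra axiom it reduces to the rank $(1,1)$ case: checking $\Upsilon_X(1_{Pic^lX} * 1_{Pic^mX}) = x_1^l \star x_1^m$. Applying the coproduct to $1_{Pic^lX} * 1_{Pic^mX}$ amounts to enumerating short exact sequences $0 \to L_1 \to V \to L_2 \to 0$ of line bundles over $X$, weighted by the orders of automorphism groupoids. Riemann--Roch computes $\dim \Ext^1(L_2, L_1)$ and hence the size of the extension groupoid, while the Weil conjectures package the resulting sums over Picard varieties into the Euler factors of $\zeta_X$. These contributions reassemble into the rational function $g_X(x_1/x_2)$, and the symmetrization built into the coproduct reconstructs the shuffle formula \eqref{E:intro2} in this case. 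Coassociativity of the Hall coproduct then propagates the identity to arbitrary products, so $\Upsilon_X$ is an algebra homomorphism.

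For injectivity I would invoke the Green Hopf pairing on $\UU^>_X$, which is nondegenerate on each graded piece. Since the iterated coproduct is adjoint to the iterated multiplication on rank-$1$ generators, any nonzero $f \in \ker \Upsilon_X$ would pair trivially against every product $1_{Pic^{d_1}X} * \cdots * 1_{Pic^{d_r}X}$. But such products span $\UU^>_X[r]$ by definition, so nondegeneracy of the Green pairing forces $f = 0$.

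The main obstacle will be the rank $(1,1)$ computation. The intermediate sums, indexed by divisors on $X$ and weighted by automorphism groups of extensions, must collapse through Riemann--Roch and the functional equation of $\zeta_X$ into the clean Euler-factor form $z^{g-1}(1-qz)(1-z^{-1})^{-1}\prod_i(1-\alpha_i z^{-1})(1-\bar{\alpha}_i z^{-1})$. Every other step --- surjectivity, propagation from rank $(1,1)$ to higher rank, and injectivity via the Hopf pairing --- is a more-or-less formal consequence of the bialgebra formalism, but this one rests on a precise enumerative identity whose cancellations reflect the finer arithmetic geometry of $X$.
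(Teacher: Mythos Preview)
Your overall architecture matches the paper's: define the map via the iterated constant term $J_r=(\omega^{\otimes r})\Delta_{1,\ldots,1}$, prove injectivity via nondegeneracy of Green's pairing (exactly as you say), and get surjectivity from generation in degree one. Those parts are fine.

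Two points diverge from the paper, and one of them is a genuine gap.

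First, the constant term map $J_r$ does \emph{not} land in symmetric Laurent polynomials: it takes values in the completion $\CC[x_1^{\pm1},\ldots,x_r^{\pm1}][[x_1/x_2,\ldots,x_{r-1}/x_r]]$, and the image generates a \emph{non-symmetric} shuffle algebra $\mathbf{S}_{h_X(z)}$ with kernel $h_X(z)=q^{1-g}\zeta_X(z)/\zeta_X(q^{-1}z)$. The passage to the symmetric shuffle algebra $\Ab_{g_X(z)}$ is a separate step: one solves $h_X(z)=g_X(z^{-1})/g_X(z)$ (the functional equation for $\zeta_X$ is exactly what makes this possible) and checks that multiplication by $\prod_{i<j}g_X(x_i/x_j)$ carries $\mathbf{S}_{h_X(z)}$ isomorphically onto $\Ab_{g_X(z)}$. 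Your sketch conflates $J$ with $\Upsilon_X$.

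Second, your reduction of multiplicativity to the $(1,1)$ case is not justified, and your identification of the key computation is off. Coassociativity plus the bialgebra axiom do give $\widetilde{\Delta}^{(r-1)}(u_1\cdots u_r)=\widetilde{\Delta}^{(r-1)}(u_1)\cdots\widetilde{\Delta}^{(r-1)}(u_r)$, but one must then apply $\omega^{\otimes r}$, which is \emph{not} an algebra map; the reordering needed to reach normal form $\UU^>_X\cdot\widetilde{\UU}^0_X$ is where all the content lies, and it does not reduce to the case $r=2$. The paper computes $J_r(\mathbf{1}^{\ss}_{1,d_1}\cdots\mathbf{1}^{\ss}_{1,d_r})$ for all $r$ at once: after inserting the explicit coproduct $\widetilde{\Delta}(\mathbf{1}^{\ss}_{1,d})=\mathbf{1}^{\ss}_{1,d}\otimes 1+\sum_{l\geq0}\theta_{0,l}\boldsymbol{\kappa}_{1,d-l}\otimes\mathbf{1}^{\ss}_{1,d-l}$, each pairwise reordering is a commutation of a \emph{torsion} element $\theta_{0,k}$ past a $\mathbf{1}^{\ss}_{1,n}$, not an enumeration of line-bundle extensions. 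The relevant identity is $\theta_{0,d}\mathbf{1}^{\ss}_{1,n}=\sum_{k}\xi_k\mathbf{1}^{\ss}_{1,n+k}\theta_{0,d-k}$ with $\sum_d\xi_d s^d=\zeta_X(s)/\zeta_X(q^{-1}s)$; this in turn comes from the Green pairing $(T_{0,d},T_{0,d})_G$, which is where $\#X(\mathbb{F}_{q^d})$ and hence the zeta function actually enter. So the crucial computation is rank $(0,1)$ (Hecke action of torsion on $Pic$), not rank $(1,1)$, and the Riemann--Roch/extension-count picture you describe is not the mechanism the paper uses.
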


\vspace{.1in}

The map $\Upsilon_X$ is essentially the constant term map (restriction to the torus), and Theorem 1 is a consequence of the Gindikin-Karpelevich formula. The fact that the constant term map lands in a space of \textit{symmetric} functions is a manifestation of the functional equation for Eisenstein series. The Hecke operators preserve the spherical Hall algebra $\UU^>_X$, and their action on each graded component
$\UU^>_X[r]$ is directly related to the action of $\CC[x_1^{\pm 1}, \ldots, x_r^{\pm 1}]^{\mathfrak{S}_r} \simeq Rep\;GL_r$ on $\Ab_r$. It would therefore be interesting to determine precisely the structure of $\mathbf{A}_r$ as a $\CC[x_1^{\pm 1}, \ldots, x_r^{\pm 1}]^{\mathfrak{S}_r}$-module; a first step is taken in Proposition~\ref{T:wheels} where the support of $\mathbf{A}_r$ is determined in terms of \textit{wheel conditions}.

A consequence of Theorem~1 is that $\UU^>_X$ only depends on the genus $g$ of $X$ and on its set of Weil numbers; this allows us to define , for each genus $g$, a `universal'
form $\UU^>_{R_a}$ of $\UU^>_X$ over the representation ring $R_a=Rep\;T_a$ of the torus
$$T_a=\{(\eta_1, \overline{\eta}_1, \ldots, \eta_g, \overline{\eta}_g) \in (\CC^{\times})^{2g}\;;\; \eta_i\overline{\eta}_i=\eta_j\overline{\eta}_j, \;\forall\;i,j\} \simeq (\CC^\times)^{g+1}.$$

\vspace{.2in}

\paragraph{\textbf{0.3}} The Langlands correspondence for function fields, proved for $GL_r$ by Drinfeld ($r=2$) and then by L. Lafforgue (for all $r$) sets up a bijection between cuspidal Hecke eigenforms of rank $r$ over $X$ and irreducible $n$-dimensional $l$-adic representations of the Galois group $Gal(\overline{F}/F)$ where $F$ is the function field of $X$. The only cuspidal Hecke eigenform belonging to the spherical Hall algebra $\UU^>_X$ is the constant function $1_{PicX}$, which corresponds to the trivial (one-dimensional) Galois representation. Hence it is natural to expect that the Langlands correspondence relevant to $\UU^>_X$ will involve Galois representations 'close' to the trivial one.

At the moment we are not able to understand the role of the algebra $\UU^>_X$ in this Langlands picture for any \textit{fixed} curve $X$. However, we will give such an interpretation for the \textit{universal} spherical Hall algebra $\UU^>_{R_a}$ of a fixed genus $g$. For this, one has to pass to the complex setting and consider instead Beilinson and Drinfeld's version of the geometric Langlands program. Recall that this program aims at setting up an equivalence of (unbounded, triangulated) categories 
\begin{equation}\label{E:intro3}
Coh(\underline{Loc}_rX_\CC) \simeq D\hspace{-.03in}-\hspace{-.03in}mod(\underline{Bun}_rX_\CC)
\end{equation}
where $\underline{Loc}_rX_\CC$ and $\underline{Bun}_rX_\CC$ are the stacks of 
rank $r$ local systems and rank $r$ vector bundles on a complex curve $X_\CC$. 
When $r=1$ such an equivalence is known thanks to the work of Lang, Laumon and 
Rosenlicht (the \textit{geometric class field theory}, 
see e.g. \cite{LaumonICM}).

Let us consider the formal neighborhood $\widehat{triv}_r$ of the trivial local
system $triv_r$ in $\underline{Loc}_rX_\CC$ and let us assume for simplicity 
that $g >0$. By deformation theory $\widehat{triv}_r$ is isomorphic to the 
formal neighborhood of $\{0\}$ in the (singular) quotient stack
$C_{\mathfrak{gl}_r}/GL_r$, where $C_{\mathfrak{gl}_r}$ is the quadratic 
cone of solutions to the Maurer-Cartan equation
\begin{equation}\label{E:intro4}
C_{\mathfrak{gl}_r}=\{u \in H^1(X_\CC,\mathfrak{gl}_r)\;;\; [u,u]=0\}.
\end{equation}
Here $[\,,\,]$ is the Lie bracket on the dg Lie algebra 
$H^{\bullet +1}(X_\CC, \mathfrak{gl}_r)$ given by
$[c \otimes x, c' \otimes y]=c \cdot c' \otimes [x,y]$. 
Fixing a symplectic basis of $H^1(X_\CC,\CC)$ yields an identification
\begin{equation}\label{E:intro5}
C_{\mathfrak{gl}_r}=\big\{(a_i, b_i)_{i=1, \ldots, g} \in (\mathfrak{gl}_r)^{2g}\;;\; \sum_i [a_i,b_i]=0\big\}.
\end{equation}
The symplectic group $Sp(H^1(X_\CC,\CC), \cdot)$ naturally acts on $C_{\mathfrak{gl}_r}/GL_r$, and so does the torus
$$T_s=\{(e_1, f_1, \ldots, e_g, f_g) \in 
(\CC^{\times})^{2g}\;;\; e_if_i=e_jf_j, \;\forall\;i,j\} 
\simeq (\CC^\times)^{g+1}.$$
Let $P$ be a parabolic subgroup of $GL_r$ with Levi factor $GL_s \times GL_t$. Using the diagram
\begin{equation}\label{E:intro6}
\xymatrix{ & \underline{Loc}_PX_\CC \ar[dr]^-{p} \ar[ld]_-{q} &\\
\underline{Loc}_sX_\CC \times \underline{Loc}_tX_\CC & & \underline{Loc}_{r}X_\CC}
\end{equation}
(and its restriction to formal neighborhoods of trivial local systems)
we define a convolution product in equivariant $K$-theory
$$K^{T_s}(C_{\mathfrak{gl}_s}/GL_s) \otimes K^{T_s}(C_{\mathfrak{gl}_t}/GL_t) 
\to K^{T_s}(C_{\mathfrak{gl}_r}/GL_r).$$
This equips the graded space 
$$\mathbf{C}=\CC 1 \oplus \bigoplus_{r \geq 1} 
K^{T_s}(C_{\mathfrak{gl}_r}/GL_r)$$ with the structure of an associative 
algebra over the representation ring $R_s=Rep\;T_s$. Observe that the 
formal neighborhood $\widehat{triv}_r$, and hence
the algebra $\mathbf{C}$, only depend, up to isomorphism, on the genus $g$ 
of $X_\CC$.

Let $\mathbf{C}'$ be the subalgebra of $\mathbf{C}$ generated by 
$\mathbf{C}_1=K^{T_s \times GL_1}(\CC^{2g})=R_s[x^{\pm 1}]$. 
Inspired by the note \cite{Groj}, in the context of quivers, we prove the 
following result. Put 
$$k(z)=\frac{1-p^{-1}z^{-1}}{1-z}\prod_{i=1}^g (1-e_i^{-1}z)(1-f_i^{-1}z) \in R_s(z)$$
where we have set $p=e_if_i$ (for any $i$). Let $\mathbf{A}_{k(z)}$ be the $R_s$-shuffle algebra associated to $k(z)$.

\vspace{.1in}

\begin{theore} The assignment $x^l \mapsto x_1^l$ in degree one extends to an algebra isomorphism
$$\Phi: \mathbf{C}'/torsion \stackrel{\sim}{\longrightarrow} \mathbf{A}_{k(z)}.$$
\end{theore}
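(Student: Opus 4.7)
The proof follows the strategy of \cite{Groj} and runs in parallel to that of Theorem~1. Since $\mathbf{C}'$ is by definition generated in degree one, and $\mathbf{A}_{k(z)}$ is likewise generated by its degree one piece (which as an $R_s$-module coincides with $\mathbf{C}_1=R_s[x^{\pm 1}]$), it suffices to verify that the iterated convolution of $r$ rank-one classes in $\mathbf{C}$ computes, under a suitable identification, the shuffle product of the corresponding Laurent polynomials with kernel $k(z)$.

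The first step is to construct a K-theoretic analogue of the constant term embedding $\Upsilon_X$ of Theorem~1. Let $T\subset GL_r$ be the diagonal torus and $W=\mathfrak{S}_r$ its Weyl group. Since any two commuting diagonal matrices automatically satisfy the Maurer--Cartan equation, the $T$-fixed locus of $C_{\mathfrak{gl}_r}$ is the full space $\mathfrak{t}^{2g}$. Thomason's equivariant localization theorem, combined with the identification of $GL_r$-equivariant K-theory with $W$-invariants of $T$-equivariant K-theory, yields (after inverting torsion) an injective $R_s$-linear map
\begin{equation*}
\iota_r:\;K^{T_s}(C_{\mathfrak{gl}_r}/GL_r)/\mathrm{torsion}\;\hookrightarrow\;K^{T_s\times T}(\mathfrak{t}^{2g})^{W}_{\mathrm{loc}}\;=\;R_s[x_1^{\pm 1},\ldots,x_r^{\pm 1}]^{W}_{\mathrm{loc}}.
\end{equation*}

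The core step is to evaluate $\iota_r$ on an $r$-fold convolution $f_1\ast\cdots\ast f_r$ of rank-one classes via the iterated Borel induction diagram
\begin{equation*}
\mathfrak{t}^{2g}/T\;\xleftarrow{q}\;C_{\mathfrak{b}}/B\;\xrightarrow{p}\;C_{\mathfrak{gl}_r}/GL_r,
\end{equation*}
where $B\subset GL_r$ is the Borel with Levi $T$ and $C_{\mathfrak{b}}=\{(a_i,b_i)\in\mathfrak{b}^{2g}\;:\;\sum_i[a_i,b_i]=0\}$. The map $q$ is projection to the Levi part, so after localization $q^*(f_1\boxtimes\cdots\boxtimes f_r)=f_1(x_1)\cdots f_r(x_r)$. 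The map $p$ is proper; Atiyah--Bott localization and Weyl summation over fixed points give
\begin{equation*}
\iota_r\bigl(f_1\ast\cdots\ast f_r\bigr)=\sum_{w\in W}w\!\left(\prod_{j<k}\kappa(x_j,x_k)\,f_1(x_1)\cdots f_r(x_r)\right),
\end{equation*}
with $\kappa(x_j,x_k)$ the ratio of equivariant Euler classes coming from the non-invariant part of the tangent complex of the inclusion $C_{\mathfrak{b}}/B\hookrightarrow C_{\mathfrak{gl}_r}/GL_r$. Collecting contributions at the $\alpha_{jk}$-weight: the $2g$ factors $\prod_i(1-e_i^{-1}x_j/x_k)(1-f_i^{-1}x_j/x_k)$ from the weights of $a_i,b_i$; a numerator $(1-p^{-1}x_k/x_j)$ from the codimension-one Maurer--Cartan relation (the Lie bracket doubling the $e_if_i$-weight); and a denominator $(1-x_j/x_k)$ removed by the quotient $GL_r/B$. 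Reassembling produces exactly $\kappa(x_j,x_k)=k(x_j/x_k)$, matching the shuffle product of $\mathbf{A}_{k(z)}$.

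Surjectivity of $\Phi$ is then immediate from generation in degree one, and injectivity modulo torsion follows from the injectivity of $\iota_r$ together with the equality of convolution and shuffle products. The principal technical obstacle is the singular nature of $C_{\mathfrak{gl}_r}$: to cleanly extract the Maurer--Cartan Euler factor and to justify the K-theoretic pushforward $p_*$ and localization formula in this setting, one needs a derived or Koszul enhancement of $C_{\mathfrak{gl}_r}$ inside the smooth ambient $(\mathfrak{gl}_r)^{2g}$. Working on $(\mathfrak{gl}_r)^{2g}/GL_r$ with an explicit complex of locally free sheaves resolving $\mathcal{O}_{C_{\mathfrak{gl}_r}}$ makes the Maurer--Cartan contribution to the Euler class transparent and should complete the identification of $\kappa$ with $k$.
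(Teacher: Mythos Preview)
Your approach is essentially the same as the paper's: both compute the iterated convolution via equivariant localization and Weyl summation, and both arrive at the weighted symmetrization $\sum_{w}w\bigl(\prod_{j<k}k(x_j/x_k)\,f_1(x_1)\cdots f_r(x_r)\bigr)$, which identifies $\mathbf{C}'/\mathrm{torsion}$ with the shuffle algebra $\mathbf{A}_{k(z)}$.

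The one substantive difference is in how the singularity of $C_{\mathfrak{gl}_r}$ is handled. You work directly on the singular cone and flag the need for a Koszul or derived enhancement inside $(\mathfrak{gl}_r)^{2g}$ as a residual technical issue. The paper builds this into the formalism from the outset by casting the convolution as a Lagrangian correspondence: one realizes $C_{\mathfrak{gl}_r}=T^*_GX'$ inside the smooth cotangent bundle $T^*X'=\mathfrak{g}^g\times\mathfrak{g}^g$, and likewise for $C_{\mathfrak{b}}$, so that all pullbacks and pushforwards take place between smooth varieties. The ``map to torsion-free quotient'' is then simply the pushforward $K^{T_s\times G}(C_{\mathfrak{gl}_r})\to K^{T_s\times G}(\mathfrak{g}^g\times\mathfrak{g}^g)$ followed by restriction $Li^*$ to the origin, rather than localization to the $T$-fixed locus $\mathfrak{t}^{2g}$ as you propose. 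In this framework the Maurer--Cartan factor $(1-p^{-1}z^{-1})$ arises transparently as $\Lambda_{-1}(p\,\mathfrak{n})^*$, the Koszul contribution from the normal bundle of $T^*_GX$ in $T^*X$, and the remaining factors drop out of a standard fixed-point computation on $G/B$. So what you describe as the principal technical obstacle is precisely what the paper's cotangent-bundle setup is designed to make routine; your sketch is correct, but the paper's packaging turns the last paragraph of your argument into the first.
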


\vspace{.1in}

We conjecture that $\mathbf{C}'$ is a torsion free module. 

\vspace{.2in}

\paragraph{\textbf{0.4}} The functions $g(z)$ and $k(z)$ are of the same form. 
Combining Theorems 1 and 2 we deduce the following Langlands type isomorphism. 
We identify $R_a$ and $R_s$ via
$\eta_i \mapsto e_i^{-1}, \overline{\eta}_i \mapsto f_i^{-1}$.

\vspace{.1in}

\begin{coro} There exists a unique algebra (anti)isomorphism
\begin{equation}\label{E:intro7}
\Theta_R~: \mathbf{C}'/torsion \stackrel{\sim}{\longrightarrow}\dot{\UU}^>_{R_a}
\end{equation}
which restricts to geometric class field theory in degree one.
\end{coro}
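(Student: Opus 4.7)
The plan is to construct $\Theta_R$ as the composition $\Upsilon_{R_a}^{-1}\circ\iota\circ\Phi$, where $\Phi$ is furnished by Theorem~2, $\Upsilon_{R_a}$ is the universal version of Theorem~1, and $\iota$ is a comparison map between the two shuffle algebras induced by a direct match of the kernels $g_{R_a}(z)$ and $k(z)$.

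First I would promote Theorem~1 to the universal setting: since $\Upsilon_X$ depends on $X$ only through its Weil numbers, replacing $\alpha_i,\bar\alpha_i$ by the generators $\eta_i,\bar\eta_i$ of $R_a$ (and $q$ by $\eta_i\bar\eta_i$) yields a universal shuffle realization $\Upsilon_{R_a}:\dot{\UU}^>_{R_a}\stackrel{\sim}{\longrightarrow}\mathbf{A}_{g_{R_a}(z)}$. Under the identification $\eta_i\leftrightarrow e_i^{-1}$, $\bar\eta_i\leftrightarrow f_i^{-1}$ (which forces $q\leftrightarrow p^{-1}$), a direct computation shows
\[
g_{R_a}(z)=z^{g-1}\,k(z^{-1}).
\]
The substitution $z\mapsto z^{-1}$ replaces $k(x_i/x_j)$ by $k(x_j/x_i)$ in formula (\ref{E:intro2}) and hence transposes the two factors in the shuffle product; a short verification on two-letter products confirms that this induces an algebra \emph{antiisomorphism} $\mathbf{A}_{k(z)}\stackrel{\sim}{\longrightarrow}\mathbf{A}_{k(z^{-1})}$. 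The remaining monomial prefactor $z^{g-1}$ contributes only an overall degree shift and is absorbed by conjugation with a suitable degree-dependent rescaling $P(x_1,\ldots,x_r)\mapsto(x_1\cdots x_r)^{c(r)}P$. Composing these two steps defines the desired $\iota:\mathbf{A}_{k(z)}\stackrel{\sim}{\longrightarrow}\mathbf{A}_{g_{R_a}(z)}$.

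Setting $\Theta_R=\Upsilon_{R_a}^{-1}\circ\iota\circ\Phi$ produces the required (anti)isomorphism. Uniqueness reduces to degree one: by definition, $\mathbf{C}'$ is generated over $R_s$ by $\mathbf{C}'_1$, and by Theorem~1 the algebra $\dot{\UU}^>_{R_a}$ is generated over $R_a$ by $\dot{\UU}^>_{R_a}[1]$, so any algebra (anti)homomorphism between them is determined by its restriction to degree one. That restriction is prescribed: both $\mathbf{C}'_1=K^{T_s\times GL_1}(\CC^{2g})$ and $\dot{\UU}^>_{R_a}[1]$ are canonically $R_a[x^{\pm 1}]$ after the identification $\eta_i\leftrightarrow e_i^{-1}$, $\bar\eta_i\leftrightarrow f_i^{-1}$, and the resulting assignment $x^l\leftrightarrow 1_{Pic^l X}$ is exactly rank-one geometric class field theory in the formal neighborhood of the trivial character. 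The main obstacle is verifying that the combined effect of the substitution $z\mapsto z^{-1}$ and the monomial rescaling $z^{g-1}$, composed with $\Phi$ and $\Upsilon_{R_a}$ in degree one, reproduces this identification without any spurious sign or character twist; once this degree-one compatibility is checked, both existence and uniqueness of $\Theta_R$ follow.
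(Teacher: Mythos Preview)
Your approach is essentially the same as the paper's: both construct $\Theta_R$ by composing the shuffle isomorphism $\Phi$ of Theorem~2 with the inverse of the universal shuffle realization of $\dot{\UU}^>_{R_a}$, bridged by a direct comparison of the two shuffle kernels. The paper's execution is cleaner in one respect: rather than targeting $\mathbf{A}_{g_{R_a}(z)}$ (which is the shuffle model for the \emph{untwisted} $\UU^>_{R_a}$), it uses the twisted realization $\dot{\Upsilon}:\dot{\UU}^>_{R_a}\stackrel{\sim}{\to}\mathbf{A}_{\tilde{\zeta}(z^{-1})}$ of Corollary~\ref{C:finalshuffle}. Under the identification $\eta_i\mapsto e_i^{-1},\ \bar\eta_i\mapsto f_i^{-1}$ one has exactly $k(z)=\tilde{\zeta}(z)$, so the only bridge required is the observation that the identity map is an anti-isomorphism $\mathbf{A}_{\tilde{\zeta}(z)}\to\mathbf{A}_{\tilde{\zeta}(z^{-1})}$; your monomial-prefactor step then becomes superfluous, since it is precisely the twist already absorbed into $\dot{\Upsilon}$. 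One normalization you do not mention: to make the degree-one restriction match geometric class field theory, the paper further rescales in degree $n$ by the scalar $\prod_{l}(1-x_l)^{-n}(1-y_l)^{-n}\in R$, so that $z^d[\mathcal{O}_{\{0\}}]$ (rather than $z^d[\mathcal{O}_{\CC^{2g}}]$) is sent to $\mathbf{1}^{\ss}_{1,d}$. This is not a monomial rescaling in the $x_i$ but a constant in $R$ per graded piece, and it is what pins down the ``no spurious twist'' condition you flag at the end.
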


\vspace{.1in}

In other words, the (universal) spherical Hall algebra is in correspondence 
with a certain convolution algebra of coherent sheaves on
the stacks $\underline{Loc}_rX_\CC$, supported in the formal neighborhood of 
the trivial local systems. The above corollary also suggests that
the Langlands correspondence should be compatible with the parabolic induction 
functors constructed from diagrams (\ref{E:intro1}) and
(\ref{E:intro6}) respectively. Note that we need to use a slightly twisted 
version $\dot{\UU}^>_{R_a}$ of $\UU^>_{R_a}$, see Section~1.13. This twist
is indeed very classical in the theory of Eisenstein series. Finally, one may 
show that the isomorphism $\Theta_R$ intertwines
the actions of the Wilson and Hecke operators, as required of (geometric) 
Langlands duality. 

Several remarks are in order at this point. First of all, the $K$-theoretic 
Hall algebra $\mathbf{C}'$ is constructed out of the moduli
stacks $\underline{Loc}_rX_\CC$ for a \textit{complex} curve $X_\CC$ of genus 
$g$ while the spherical Hall algebra $\UU^>_{R_a}$
is defined using curves of genus $g$ over \textit{finite} fields. Moreover, 
$\mathbf{C}'$ is a Grothendieck group of $T_s$-\textit{equivariant} coherent 
sheaves while the structure of $\UU^>_{R_a}$ as an $R_a$-module comes from the 
Weil numbers of curves $X$ over $\mathbb{F}_q$.
In order to correct this discrepancy, and to get something closer in spirit 
to (\ref{E:intro3}) one is tempted to consider instead of $\UU^>_{R_a}$ the 
monoidal category $\mathcal{E}is_{X_\CC}$ of Eisenstein automorphic (perverse) 
sheaves over the stacks $\underline{Bun}_rX_{\CC}$,
see \cite{Laumon}, \cite{SLectures}. We expect the symplectic group 
$Sp(H^1(X_{\CC},\CC),\cdot)$ 
and the torus $T_a$ to act on $\mathcal{E}is_{X_{\CC}}$, and we 
expect the resulting $T_a$-graded Grothendieck group
to be isomorphic to (a completion of) $\UU^>_{R_a}$. When $\CC$ is replaced by $\overline{\mathbb{F}_q}$ the $T_a$-grading on $\mathcal{E}is_{X_{\fqb}}$ should come from the Frobenius weight structure of the 
Eisenstein sheaves, see \cite{SLectures} and \cite{Scano} for $g=1$. 
We hope to get back to this in the future.

We give two immediate applications of our Langlands isomorphism 
(\ref{E:intro7}). We determine the image, under $\Theta_R$, of the class 
$[\mathcal{O}_{triv_r}]$ of the skyscraper sheaf supported at the trivial local
system, see Proposition \ref{P:cohtriv}. 
Conversely, we determine the inverse image 
under $\Theta_R$ of the constant function $1_{Bun_rX}$, 
see Proposition \ref{P:buntriv}.

\vspace{.2in}

\paragraph{\textbf{0.5}} To wrap up this paper, we provide two natural 
extensions of our results. By construction the spherical Hall algebra 
$\UU_X$ is generated by the locally constant functions on the Picard group 
$Pic\;X$, i.e., by the simplest cuspidal functions in rank one. We define 
the \textit{principal Hall algebra} to be the subalgebra of $\H_{Vec(X)}$ 
generated by \textit{all} functions on $Pic\;X$. We generalize the description 
given in Theorem 1 as a shuffle algebra to the principal Hall algebra. Note 
that unlike the spherical Hall algebra, the principal Hall algebra depends on 
more than just the Weil numbers of $X$ : it also depends on the group 
structure of $Pic\;X$. 
Finally, the spherical Hall algebra and the K-theoretic 
Hall algebra both admit natural extensions to arbitrary reductive groups. 
And in fact neither the statement nor the proof of the Langlands isomorphism 
(\ref{E:intro7}) use any special feature of the groups $GL_r$. We sketch such 
a generalization here.

\vspace{.1in}

\paragraph{\textbf{0.6}} The content of the paper is as follows. In Section 1 
we recall the definition of the Hall algebra of $X$, and give the shuffle 
presentation of the spherical
Hall algebra $\UU_X$. Section 2 is devoted to the Hall algebra in the 
(equivariant) K-theory of the genus $g$ commuting varieties 
$C_{\mathfrak{gl}_r}$. In Section 3 we state and prove our Langlands 
isomorphism and give the two applications mentioned in \textbf{0.4.} above. 
The extensions of our results to the principal Hall algebra and to other 
reductive groups are sketched in two appendices at the end of the paper.

\vspace{.2in}

\section{Hall algebras of curves}

\vspace{.1in}

\paragraph{\textbf{1.1.}} Let $\E$ be a smooth connected projective curve of genus $g$ defined over some finite field $\fq$. Let $\zeta_{\E}(t) \in \CC(t)$ be its zeta function, i.e.,
$$\zeta_{\E}(t)=\exp \bigg( \sum_{d \geqslant 1} \# \E(\mathbb{F}_{q^d}) \frac{t^d}{d} \bigg).$$
It is well-known that
$$\zeta_\E(t)=\frac{\prod_{i=1}^g (1-\a_i t)(1-\bar\a_it)}{(1-t)(1-qt)},$$
where $\{\a_1, \bar\a_1, \ldots, \a_g, \bar\a_g\}$ are the Frobenius eigenvalues in
$H^1(\E_{\overline{\fq}}, \qlb)$. Recall that $|\a_i|=q^{1/2},$
i.e., $\a_i \bar\a_i=q$, for all $i=1, \ldots, g$.

\vspace{.2in}

\paragraph{\textbf{1.2.}} Let $Coh(\E)$ be the category of coherent sheaves over $\E$,
and let $Tor(\E)$ be the full subcategory of $Coh(\E)$ consisting of
torsion sheaves. We will be interested in the Hall algebra $\H_{\E}$
of $Coh(\E)$. We briefly recall its definition here for the reader's
convenience, and refer to \cite{SLectures}, especially Section~4.11
for its standard properties. Let $K'_0(\E)=\Z^2$ be the numerical
Grothendieck group of $Coh(\E)$. The class of a sheaf $\mathcal{F}$
is $(r_{\mathcal{F}},d_{\mathcal{F}})$ where we have abbreviated
$r_{\mathcal{F}}=rank(\mathcal{F})$,
$d_{\mathcal{F}}=deg(\mathcal{F})$. The Euler form on $K_0'(\E)$ is
given by
\begin{equation*}
\langle \mathcal{F}, \mathcal{G}\rangle=
\text{dim}\;\text{Hom}(\mathcal{F},\mathcal{G})-\text{dim}\;\text{Ext}^1(\mathcal{F},\mathcal{G})=(
1-g)r_{\mathcal{F}}r_{\mathcal{G}} + \left| \begin{array}{cc} r_{\mathcal{F}} & r_{\mathcal{G}} \\
d_{\mathcal{F}} & d_{\mathcal{G}} \end{array} \right|.
\end{equation*}
Let $\mathcal{I}$ be the set of isomorphism classes of coherent sheaves over $\E$.
Let us choose a square root $v$ of $q^{-1}$. As a vector space we have
$$\H_{\E}=\{f~: \mathcal{I} \to \CC; supp(f)\;\text{finite}\}=
\bigoplus_{\mathcal{F} \in \mathcal{I}} \CC 1_{\mathcal{F}}$$
where $1_{\mathcal{F}}$ denotes the characteristic function of $\mathcal{F} \in \mathcal{I}$.
The multiplication is defined as
$$(f \cdot g) (\mathcal{R})=\sum_{\mathcal{N} \subseteq
\mathcal{R}} v^{-\langle \mathcal{R}/\mathcal{N},\mathcal{N}\rangle} 
f(\mathcal{R}/\mathcal{N}) g(\mathcal{N})$$
and the comultiplication is
$$\Delta(f) (\mathcal{M},\mathcal{N})=\frac{v^{\langle \mathcal{M},\mathcal{N}\rangle}}
{|\text{Ext}^1(\mathcal{M},\mathcal{N})|}\sum_{\xi \in
\text{Ext}^1(\mathcal{M},\mathcal{N})} f(\mathcal{X}_{\xi})$$ where
$\mathcal{X}_{\xi}$ is the extension of $\mathcal{N}$ by
$\mathcal{M}$ corresponding to $\xi$. 
Note that the coproduct takes values in a completion $\H_{\E}
\widehat{\otimes} \H_{\E}$ of the tensor space  $\H_{\E}
\otimes \H_{\E}$ only, see e.g., (\ref{E:identishuffle}). 
The Hall algebra $\H_{\E}$ is $\Z^2$-graded, by the class in the Grothendieck group.
We will sometimes write $\Delta_{\a,\beta}$  in order to specify the
graded components of the coproduct.

\vspace{.2in}

\paragraph{\textbf{1.3.}} The triple $(\H_{\E},\cdot, \Delta)$ is not a
(topological) bialgebra, but it becomes one if we suitably twist the
coproduct. For this we introduce an extra subalgebra
$\boldsymbol{\mathcal{K}}=\CC[\boldsymbol{\kappa}_{r,d}]$, with $(r,d)
\in \Z^2$, and we define an extended Hall algebra
$\widetilde{\H}_{\E} = \H_{\E} \otimes \boldsymbol{\mathcal{K}}$
with relations
$$\boldsymbol{\kappa}_{r,d}\, \boldsymbol{\kappa}_{s,l}=\boldsymbol{\kappa}_{r+s,d+l},
\quad \boldsymbol{\kappa}_{0,0}=1,\quad \boldsymbol{\kappa}_{r,d}
1_{\mathcal{M}}\, \boldsymbol{\kappa}_{r,d}^{-1} =
v^{-2r(1-g)r_{\mathcal{M}}} 1_{\mathcal{M}}.$$ The new coproduct is
given by the formulas
$$\widetilde{\Delta}(\boldsymbol{\kappa}_{r,d})=
\boldsymbol{\kappa}_{r,d} \otimes \boldsymbol{\kappa}_{r,d},$$
$$\widetilde{\Delta}(f)=\sum_{\mathcal{M},\mathcal{N}} \Delta(f)(\mathcal{M},\mathcal{N})
1_{\mathcal{M}}\boldsymbol{\kappa}_{r_{\mathcal{N}},d_{\mathcal{N}}}
\otimes 1_{\mathcal{N}}.$$ Then $(\widetilde{\H}_\E, \cdot,
\widetilde{\Delta})$ is a topological bialgebra. Finally, let
$$(\;,\;)_G~: \widetilde{\H}_{\E} \otimes \widetilde{\H}_{\E} \to
\CC$$ be Green's Hermitian scalar product defined by
$$(1_{\mathcal{M}} \boldsymbol{\kappa}_{r,d}, 1_{\mathcal{N}}\boldsymbol{\kappa}_{s,l})_G=
\frac{\delta_{\mathcal{M},\mathcal{N}}}{\#\text{Aut}(\mathcal{M})}v^{-2(1-g)rs}.$$
This scalar product satisfies the Hopf property, i.e., we have
$$(ab,c)_G=(a \otimes b, \widetilde{\Delta}(c))_G,\quad a,b,c \in
\widetilde{\H}_{\E}.$$ Observe that the restriction of $(\;,\;)_G$
to $\H_{\E}$ is nondegenerate.
The Hall algebras  $\widetilde{\H}_{\E}$ is also $\Z^2$-graded.
We will write $\widetilde{\Delta}_{\a,\beta}$ in order to specify the
graded components of the coproduct.

\vspace{.2in}

\paragraph{\textbf{1.4.}} The category $Tor(\E)$ is a Serre subcategory of $Coh(\E)$.
It therefore gives rise to a sub bialgebra $\widetilde{\H}_{Tor(\E)}$ of $\widetilde{\H}_{\E}$ defined by
$$\widetilde{\H}_{Tor(\E)} =\bigg( \bigoplus_{\mathcal{M} \in Tor(\E)}
\CC 1_{\mathcal{M}} \bigg) \otimes \boldsymbol{\mathcal{K}}.$$
The decomposition of $Tor(\E)$ over the points of $\E$ gives rise to a similar decomposition
at the level of Hall algebras
$$\widetilde{\H}_{Tor(\E)} =\bigg( \bigotimes_{x \in \E} \H_{Tor(\E)_x} \bigg) \otimes \boldsymbol{\mathcal{K}}$$
where $Tor(\E)_x$ is the subcategory of torsion sheaves supported at
$x \in \E$. It is well-known that $\H_{Tor(\E)_x}$ is commutative
and cocommutative : it is isomorphic to the \textit{classical}
Hall algebra. 
For $d \geqslant 1$ we set
$$\mathbf{1}_{0,d}=\sum_{\substack{\mathcal{M} \in Tor(\E) \\ deg(\mathcal{M})=d}} 1_{\mathcal{M}}$$
and we define elements $T_{0,d}, \theta_{0,d}$ of $\widetilde{\H}_{Tor(\E)}$ through the relations
$$1+ \sum_{d \geqslant 1} \mathbf{1}_{0,d} s^d=\exp \bigg( \sum_{d}
\frac{T_{0,d}}{[d]}s^d\bigg),\quad 1+ \sum_{d \geqslant 1} \theta_{0,d}
s^d=\exp \bigg( (v^{-1}-v)\sum_{d\geqslant 1} T_{0,d}s^d\bigg).$$ 
Here as
usual $[n]=(v^n-v^{-n})/(v-v^{-1})$. We set also $\mathbf{1}_{0,0}=T_{0,0}=\theta_{0,0}=1$.
The following lemma summarizes
the properties of the elements $T_{0,d}$, $\theta_{0,d}$ which we
will later need.

\vspace{.1in}

\begin{lem}\label{L:Hall1}
The following hold for all $d \in \N$ 

$(a)$ $\widetilde{\Delta}(T_{0,d})=T_{0,d} \otimes 1 +
\boldsymbol{\kappa}_{0,d} \otimes T_{0,d}$,

$(b)$ $\widetilde{\Delta}(\theta_{0,d})=\sum_{l=0}^d
\theta_{0,l}\boldsymbol{\kappa}_{0,d-l} \otimes \theta_{0,d-l}$,

$(c)$ $(T_{0,d},T_{0,d})_G=\big( v^{d-1}\# \E(\mathbb{F}_{q^d}) [d]
\big)/(d q-d).$
\end{lem}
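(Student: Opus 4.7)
The plan is to treat (a) and (b) by formal generating-series manipulations, and (c) by reducing to a local norm calculation at each closed point of $\E$ via the tensor factorization $\widetilde{\H}_{Tor(\E)}=\bigl(\bigotimes_{x\in\E}\H_{Tor(\E)_x}\bigr)\otimes\KK$.

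For (a), I would first compute the untwisted coproduct of $\mathbf{1}_{0,d}$. Since any extension of a torsion sheaf by a torsion sheaf is itself torsion of the expected degree, and since the Euler form between torsion sheaves vanishes, the defining formula for $\Delta$ yields
$$\Delta(\mathbf{1}_{0,d})=\sum_{a+b=d}\mathbf{1}_{0,a}\otimes\mathbf{1}_{0,b},\qquad \widetilde{\Delta}(\mathbf{1}_{0,d})=\sum_{a+b=d}\mathbf{1}_{0,a}\,\boldsymbol{\kappa}_{0,b}\otimes\mathbf{1}_{0,b}.$$
Because $\boldsymbol{\kappa}_{0,b}$ is central in $\widetilde{\H}_\E$, introducing the algebra map $\psi:\widetilde{\H}_\E[s]\to \widetilde{\H}_\E\otimes\widetilde{\H}_\E$ defined by $\psi(xs^n)=\boldsymbol{\kappa}_{0,n}s^n\otimes x$ (with $s$ central) lets me factor the generating series as $\widetilde{\Delta}(F(s))=(F(s)\otimes 1)\cdot\psi(F(s))$, where $F(s)=1+\sum_d\mathbf{1}_{0,d}s^d$; the two factors commute. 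Taking logs and using $T(s):=\log F(s)=\sum_d T_{0,d}s^d/[d]$ gives $\widetilde{\Delta}(T(s))=T(s)\otimes 1+\psi(T(s))$, which is (a) after extracting the coefficient of $s^d$. Applying the same technique to $\Theta(s)=1+\sum_d\theta_{0,d}s^d=\exp((v^{-1}-v)\sum_d T_{0,d}s^d)$ yields $\widetilde{\Delta}(\Theta(s))=(\Theta(s)\otimes 1)\cdot\psi(\Theta(s))$, and its coefficient of $s^d$ is exactly the sum in (b).

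For (c), the factorization of $\widetilde{\H}_{Tor(\E)}$ over closed points lets me decompose $T_{0,d}$ as a sum of local analogues. Setting $f_x=\deg x$ and defining local elements $T^{(x)}_n\in\H_{Tor(\E)_x}$ by $1+\sum_{n\geq 1}\mathbf{1}^{(x)}_{0,nf_x}s^{nf_x}=\exp\bigl(\sum_{n\geq 1} T^{(x)}_n s^{nf_x}/[n]_{v^{f_x}}\bigr)$, taking the logarithm of the product expansion of $F(s)$ over points and using the factorization $[d]_v=[f_x]_v[d/f_x]_{v^{f_x}}$ of quantum integers gives
$$T_{0,d}=\sum_{x\in\E:\,f_x|d}[f_x]_v\, T^{(x)}_{d/f_x}.$$
Since Green's scalar product is orthogonal across distinct closed points, $(T_{0,d},T_{0,d})_G=\sum_{x:\,f_x|d}[f_x]_v^{\,2}\,(T^{(x)}_{d/f_x},T^{(x)}_{d/f_x})_G$. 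The local norm $(T^{(x)}_n,T^{(x)}_n)_G$ is a classical Macdonald/Kapranov-type computation in the Hall algebra of a DVR with residue field of size $q^{f_x}$, essentially the norm of a power sum in the Hall-Littlewood inner product on symmetric functions at parameter $q_x^{-1}$. Plugging in this local formula and using the point-count identity $\#\E(\mathbb{F}_{q^d})=\sum_{f|d}f\cdot\#\{x\in\E:\deg x=f\}$ yields (c). The main obstacle is establishing the precise normalization of the local norm, which requires carefully unpacking Macdonald's isomorphism between $\H_{Tor(\E)_x}$ and the ring of symmetric functions; everything else reduces to formal generating-series manipulation exploiting the centrality of $\boldsymbol{\kappa}_{0,d}$.
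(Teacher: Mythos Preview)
The paper does not actually give a proof of this lemma; it simply refers to \cite[Section~4.11]{SLectures}. Your proposal therefore cannot be compared to the paper's own argument, but it is essentially the standard proof one finds in that reference, and it is correct.

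A few remarks on the details. In (a) and (b) your generating-series argument is sound: the key inputs are that $\widetilde{\Delta}$ is an algebra map, that $\H_{Tor(\E)}$ is commutative, and that $\boldsymbol{\kappa}_{0,n}$ is central, so the two factors of $\widetilde{\Delta}(F(s))$ commute and one may take logarithms. Your auxiliary map $\psi$ is a clean way to package the substitution $s\mapsto (\boldsymbol{\kappa}_{0,1}\otimes 1)s$; just note that its target should be $(\widetilde{\H}_\E\otimes\widetilde{\H}_\E)[[s]]$ rather than $\widetilde{\H}_\E\otimes\widetilde{\H}_\E$.

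For (c), the local decomposition $T_{0,d}=\sum_{x:\,f_x\mid d}[f_x]_v\,T^{(x)}_{d/f_x}$ and the orthogonality across distinct points are exactly right. The remaining local computation is indeed Macdonald's: under his isomorphism $\H_{Tor(\E)_x}\simeq\Lambda$ the element $T^{(x)}_n$ corresponds (up to normalization) to the power sum $p_n$, whose Hall--Littlewood norm is $n/(1-t^n)$ with $t=q_x^{-1}$. Tracking the scalar carefully and summing over $x$ with $f_x\mid d$ yields the stated formula via $\#\E(\mathbb{F}_{q^d})=\sum_{f\mid d} f\cdot\#\{x:\deg x=f\}$. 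You correctly identify the normalization bookkeeping as the only genuine work.
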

\begin{proof} See \cite[Section~4.11]{SLectures}.
\end{proof}

\vspace{.1in}

\noindent The sets $\{\mathbf{1}_{0,d}\;;\; d \in \N\}$,
$\{T_{0,d}\;;\; d \in \N\}$, and $\{\theta_{0,d}\;;\; d \in \N\}$
all generate the same subalgebra $\UU^0_{\E}$ of
$\widetilde{\H}_{Tor(\E)}$. It is known that $$\UU^0_{\E}=
\CC[\mathbf{1}_{0,1}, \mathbf{1}_{0,2}, \ldots],$$ i.e., the
commuting elements $\mathbf{1}_{0,d}$ for $d \geqslant 1$ are
algebraically independent. Clearly, the same holds also for the collections of generators
$\{\theta_{0,d}\}$, $\{T_{0,d}\}$.

\vspace{.2in}

\paragraph{\textbf{1.5.}} Now let $Vec(\E)$ be the exact subcategory of $Coh(\E)$ consisting
of vector bundles. It gives rise to a subalgebra $\H_{Vec(\E)}$
of $\H_{\E}$. This subalgebra is, however, not stable under the
coproduct $\Delta$. The multiplication map yields isomorphisms
\begin{equation}\label{E:isomo}
\H_{Vec(\E)} \otimes \widetilde{\H}_{Tor(\E)}\to
\widetilde{\H}_{\E}, \qquad \H_{Vec(\E)} \otimes {\H}_{Tor(\E)}\to{\H}_{\E}.
\end{equation}
Indeed, if $\mathcal{V}$ is a vector bundle and $\mathcal{T}$ is a torsion sheaf then 
$$1_\mathcal{V} \cdot 1_{\mathcal{T}}=v^{-\langle \mathcal{V}, \mathcal{T}\rangle} 1_{\mathcal{V} \oplus \mathcal{T}},$$ because $\text{Ext}^1(\mathcal{V},\mathcal{T})=0$ and because the subsheaf $\mathcal{T} \subset \mathcal{V} \oplus \mathcal{T}$ is canonically defined. The comultiplication provides an inverse to (\ref{E:isomo}). To be more precise, given $\mathcal{V}$, $\mathcal{T}$ as above we have 
\begin{equation}\label{E:isomocop}
\Delta_{\overline{\mathcal{V}}, \overline{\mathcal{T}}}(1_{\mathcal{V}\oplus \mathcal{T}})=v^{\langle \mathcal{V},\mathcal{T}\rangle} 1_{\mathcal{V}} \otimes 1_{\mathcal{T}}.
\end{equation}
This comes from the fact that $\mathcal{T}$ is the only torsion subsheaf of $\mathcal{V}\oplus\mathcal{T}$ of  degree $d=deg(\mathcal{T})$.
We may use (\ref{E:isomo}) to define a projection
$\widetilde{\H}_{\E} \to \H_{Vec(\E)}$, which we will denote by
$\omega$. It satisfies
\begin{equation}\label{E:Hallpi}
\omega(u_{v}u_{t}\boldsymbol{\kappa}_{r,d})=\begin{cases} u_{v} & \text{if}\; u_{t}=1\\
0 &\text{otherwise}\end{cases},
\quad u_{v} \in \H_{Vec(\E)},\quad u_{t} \in \H_{Tor(\E)},\quad (r,d)
\in \Z^2.
\end{equation}

\vspace{.2in}

\paragraph{\textbf{1.6.}} There is a natural action of the Hall algebra $\widetilde{\H}_{Tor(\E)}$ on $\H_{Vec(\E)}$ by means of the so-called \textit{Hecke operators}, given by the formula
\begin{equation*}
\begin{split}
\widetilde{\H}_{Tor(\E)}\otimes \H_{Vec(\E)} \to \H_{Vec(\E)},
\quad u_0 \otimes u
\mapsto u_0 \bullet w= \omega( u_0 w),
\end{split}
\end{equation*}
in other words, we have $$(u_0 u'_0) \bullet v=u_0 \bullet ( u'_0 \bullet v).$$ 

\begin{prop}
\label{prop:Hecket0d}
The Hecke action on $\H_{Vec(\E)}$ preserves each graded component
$\H_{Vec(\E)}[r]$, and we have
$T_{0,d} \bullet w= [T_{0,d}, w]$ for $w \in \H_{Vec(\E)}$ and $d>0$.
\end{prop}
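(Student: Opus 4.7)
Part (a) is immediate from the $\Z^2$-grading of $\widetilde{\H}_\E$: since $T_{0,d}$ has class $(0,d)$, the product $T_{0,d}\cdot w$ has rank $r$, and the graded decomposition (\ref{E:isomo}) shows that $\omega$ preserves the rank grading, so $T_{0,d}\bullet w\in\H_{Vec(\E)}[r]$. For part (b) the plan is to prove that $T_{0,d}\cdot w - w\cdot T_{0,d}$ is supported on vector bundles and coincides there with $\omega(T_{0,d}\cdot w)$. By linearity we may take $w=1_\mathcal{V}$; since $\Ext^1(\mathcal{V},\mathcal{T})=0$ for any torsion $\mathcal{T}$, the product $1_\mathcal{V}\cdot T_{0,d}$ is a sum of terms supported on $\mathcal{V}\oplus\mathcal{T}$ with $\mathcal{T}\neq 0$, hence $\omega(1_\mathcal{V}\cdot T_{0,d})=0$. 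It therefore suffices to establish $(T_{0,d}\cdot 1_\mathcal{V})(\mathcal{R})=(1_\mathcal{V}\cdot T_{0,d})(\mathcal{R})$ for every $\mathcal{R}$ which is \emph{not} a vector bundle.

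Write such an $\mathcal{R}=\mathcal{V}_\mathcal{R}\oplus\mathcal{T}_\mathcal{R}$ with $\mathcal{T}_\mathcal{R}\neq 0$ (splitting exists thanks to $\Ext^1(\mathcal{V}_\mathcal{R},\mathcal{T}_\mathcal{R})=0$). I would then parametrize subsheaves $\mathcal{V}'\subseteq\mathcal{R}$ isomorphic to $\mathcal{V}$ with torsion quotient by pairs $(\mathcal{V}'',\phi)$, where $\mathcal{V}''\subseteq\mathcal{V}_\mathcal{R}$ is the projection of $\mathcal{V}'$ (necessarily isomorphic to $\mathcal{V}$, as $\mathcal{V}'$ is torsion-free) and $\phi\in\Hom(\mathcal{V}'',\mathcal{T}_\mathcal{R})$ encodes the graph of the embedding. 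A short diagram chase identifies the quotient $\mathcal{R}/\mathcal{V}'$ with the extension $0\to\mathcal{T}_\mathcal{R}\to\mathcal{R}/\mathcal{V}'\to\mathcal{V}_\mathcal{R}/\mathcal{V}''\to 0$ whose class is the image of $\phi$ under the connecting map $\Hom(\mathcal{V}'',\mathcal{T}_\mathcal{R})\to\Ext^1(\mathcal{V}_\mathcal{R}/\mathcal{V}'',\mathcal{T}_\mathcal{R})$ associated with $0\to\mathcal{V}''\to\mathcal{V}_\mathcal{R}\to\mathcal{V}_\mathcal{R}/\mathcal{V}''\to 0$; this map is surjective since $\Ext^1(\mathcal{V}_\mathcal{R},\mathcal{T}_\mathcal{R})=0$. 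Consequently, summing $T_{0,d}(\mathcal{R}/\mathcal{V}')$ over $\phi$ at fixed $\mathcal{V}''$ reduces, up to explicit powers of $v$ and Hom cardinalities, to the coproduct coefficient $\Delta(T_{0,d})(\mathcal{T}_\mathcal{R},\mathcal{V}_\mathcal{R}/\mathcal{V}'')$.

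The decisive input is Lemma~\ref{L:Hall1}(a): the twisted primitivity $\widetilde{\Delta}(T_{0,d})=T_{0,d}\otimes 1+\boldsymbol{\kappa}_{0,d}\otimes T_{0,d}$, unpacked via the definition of $\widetilde{\Delta}$ in \textbf{1.3}, translates into $\Delta(T_{0,d})(\mathcal{B},\mathcal{A})=0$ unless $\mathcal{A}=0$ or $\mathcal{B}=0$. Every contribution with $\mathcal{V}''\subsetneq\mathcal{V}_\mathcal{R}$ therefore vanishes, leaving only the extreme case $\mathcal{V}''=\mathcal{V}_\mathcal{R}$, which forces $\deg(\mathcal{T}_\mathcal{R})=d$ and $\mathcal{V}_\mathcal{R}\simeq\mathcal{V}$, so that $\mathcal{R}/\mathcal{V}'=\mathcal{T}_\mathcal{R}$. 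Combining the $|\Hom(\mathcal{V},\mathcal{T}_\mathcal{R})|=v^{-2\langle\mathcal{V},\mathcal{T}_\mathcal{R}\rangle}$ choices of $\phi$ with the Hall-product twist $v^{-\langle\mathcal{T}_\mathcal{R},\mathcal{V}\rangle}$ and the Euler form identity $\langle\mathcal{T},\mathcal{V}\rangle+\langle\mathcal{V},\mathcal{T}\rangle=0$ (immediate from the formula in \textbf{1.2}) yields exactly $v^{-\langle\mathcal{V},\mathcal{T}_\mathcal{R}\rangle}T_{0,d}(\mathcal{T}_\mathcal{R})=(1_\mathcal{V}\cdot T_{0,d})(\mathcal{R})$, as required. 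The main technical hurdle is the combinatorial subsheaf parametrization and the matching of cardinalities with the comultiplication formula; once that bookkeeping is in place, the primitivity of $T_{0,d}$ supplies the required cancellation essentially for free.
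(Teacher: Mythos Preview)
Your proof is correct. You and the paper both rely on the same key input, the primitivity of $T_{0,d}$ from Lemma~\ref{L:Hall1}(a), but you deploy it in a genuinely different way.

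The paper argues abstractly at the level of the coproduct: since $\widetilde{\Delta}(w)\in\widetilde{\H}_\E\widehat{\otimes}\H_{Vec(\E)}$ (subsheaves of vector bundles are vector bundles), one computes $\widetilde{\Delta}([T_{0,d},w])=[\widetilde{\Delta}(T_{0,d}),\widetilde{\Delta}(w)]$ and observes that the rank-zero pieces of the second tensor factor of $\widetilde{\Delta}(w)$ are scalar multiples of $1$, which commute with $T_{0,d}$; primitivity then forces $\widetilde{\Delta}_{(r,*),(0,l)}([T_{0,d},w])=0$ for all $l>0$, and (\ref{E:isomocop}) detects the torsion part. Your approach instead unpacks the Hall product on a fixed non-vector-bundle $\mathcal{R}=\mathcal{V}_{\mathcal{R}}\oplus\mathcal{T}_{\mathcal{R}}$, parametrizes the relevant subsheaves by pairs $(\mathcal{V}'',\phi)$, and reassembles the sum over $\phi$ into a coproduct coefficient of $T_{0,d}$, which primitivity kills unless $\mathcal{V}''=\mathcal{V}_{\mathcal{R}}$. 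The paper's route is shorter and immediately generalizes to any primitive element of $\H_{Tor(\E)}$; yours makes the mechanism completely transparent and would be the natural starting point if one wanted quantitative information about the Hecke action rather than just the qualitative statement. One small slip: with the paper's conventions the coproduct coefficient you obtain is $\Delta(T_{0,d})(\mathcal{V}_{\mathcal{R}}/\mathcal{V}'',\mathcal{T}_{\mathcal{R}})$ rather than $\Delta(T_{0,d})(\mathcal{T}_{\mathcal{R}},\mathcal{V}_{\mathcal{R}}/\mathcal{V}'')$, but this is immaterial since primitivity makes it vanish whenever both arguments are nonzero.
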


\begin{proof}
We have $T_{0,d} \bullet w = \omega (T_{0,d}w)=\omega( [T_{0,d},w])$. 
We claim that $[T_{0,d},w]$ already belongs to $\H_{Vec(\E)}$. Note that 
$$\widetilde{\Delta}(T_{0,d}w)=(T_{0,d} \otimes 1 + \boldsymbol{\kappa}_{0,d} \otimes T_{0,d}) \widetilde{\Delta}(w),$$
$$\widetilde{\Delta}(w T_{0,d})=\widetilde{\Delta}(w)(T_{0,d} \otimes 1 + \boldsymbol{\kappa}_{0,d} \otimes T_{0,d}),$$
$$\widetilde{\Delta}(w) \in \widetilde{\H}_{\E} \widehat{\otimes} \H_{Vec(\E)}.$$ 
Therefore $\widetilde{\Delta}([T_{0,d},w])=[\widetilde{\Delta}(T_{0,d}), \widetilde{\Delta}(w)]$ has no component in $\H_{Vec(\E)}\boldsymbol{\mathcal{K}} \otimes \H_{Tor(\E)}$.
Hence (\ref{E:isomocop}) implies that $[T_{0,d},w]$ belongs to $\H_{Vec(\E)}$.
\end{proof}

\vspace{.2in}

\paragraph{\textbf{1.7.}} For $d \in \Z$ let $Pic^d(\E)$ be the set of line bundles over $\E$ of degree
$d$. Put
$$\mathbf{1}^{\ss}_{1,d}=\mathbf{1}_{Pic^d(\E)}=\sum_{\mathcal{L} \in Pic^d(\E)} 1_{\mathcal{L}}.$$
Let us denote by $\UU^>_{\E}$ the subalgebra of
$\H_{\E}$ generated by $\{\mathbf{1}^{\ss}_{1,d}\;;\;
d \in \Z\}$. The \textit{spherical Hall algebra} of $\E$ is the 
subalgebra\footnote{this subalgebra is usually denoted $\UU^+_{\E}$,
e.g., in \cite{BS}, \cite{SV1}, \cite{SV2}. We simply write $\UU_{\E}$ here 
since we do not consider any Drinfeld double.} 
$\UU_{\E}$ of $\H_{\E}$ generated by $\UU_{\E}^0$ and 
$\UU^>_{\E}$. We also define
$\widetilde\UU_{\E}$ as the subalgebra of $\widetilde{\H}_{\E}$ generated by
$\boldsymbol{\mathcal{K}}$ and $\UU_{\E}$. It is known that $\widetilde{\UU}_{\E}$ is a
topological sub bialgebra of $\widetilde{\H}_{\E}$. 
Finally, set $\widetilde{\UU}^0_{\E} =  \UU^0_{\E}\boldsymbol{\mathcal{K}} $. The multiplication map gives an
isomorphism $\UU^>_{\E} \otimes \widetilde{\UU}^0_{\E}
\stackrel{\sim}{\lto} \widetilde\UU_{\E}$, see e.g.,
\cite[Section 6]{SV1}.
The coproduct of the elements $\mathbf{1}^{\ss}_{1,d}$ can be explicitly computed,
see e.g., \cite[Ex. 4.12]{SLectures},
\begin{equation}\label{E:Hall1}
\widetilde{\Delta}(\mathbf{1}^{\ss}_{1,d})=\mathbf{1}^{\ss}_{1,d} \otimes 1 +
\sum_{l \geqslant 0} \theta_{0,l} \boldsymbol{\kappa}_{1,d-l} \otimes
\mathbf{1}^{\ss}_{1,d-l}.
\end{equation}

\vspace{.2in}

\paragraph{\textbf{1.8.}} 
Our aim for the rest of this first section is to give a 
presentation of the algebra
$\UU_{\E}$. When $\E$ is of genus $0$ or $1$
this was done explicitly in \cite{Kap}, \cite{BS}. In higher genus,
we will provide a more implicit presentation, which will however
suffice for our purposes here. Our approach, which is based on the notion of 
shuffle algebras, see e.g., \cite{Rosso}, \cite{Feigin}, 
can be developed for the
Hall algebra of a more or less arbitrary hereditary category.
We begin with a couple of preliminary lemmas.

\vspace{.1in}

\begin{lem}\label{L:Hall2} We have
$$[T_{0,d},\mathbf{1}^{\ss}_{1,l}]=
\frac{v^d \#\E(\mathbb{F}_{q^d})[d]}{d}\mathbf{1}^{\ss}_{1,l+d},
\quad l\in\ZZ,\quad d>0.$$
\end{lem}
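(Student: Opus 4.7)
My plan is to combine Proposition~\ref{prop:Hecket0d}, a Picard-invariance argument, and the Hopf-pairing formalism of Section~1.3 to pin down the commutator. First, Proposition~\ref{prop:Hecket0d} tells us that $[T_{0,d},\mathbf{1}^{\ss}_{1,l}] = T_{0,d} \bullet \mathbf{1}^{\ss}_{1,l}$ already lies in $\H_{Vec(\E)}[1]$; the $\Z^2$-grading forces it into the degree $l+d$ component, so it is a $\CC$-linear combination of $\{1_\mathcal{L} : \mathcal{L} \in Pic^{l+d}(\E)\}$.

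Second, I would show that this combination is proportional to $\mathbf{1}^{\ss}_{1,l+d}$. The Picard group $Pic^0(\E)$ acts on $Coh(\E)$ by tensor product, inducing $\CC$-algebra automorphisms $\tau_\mathcal{M}: 1_\mathcal{F} \mapsto 1_{\mathcal{F} \otimes \mathcal{M}}$ of $\H_\E$. Each $\tau_\mathcal{M}$ fixes $T_{0,d}$ (tensoring a torsion sheaf by a degree-zero line bundle does not change its isomorphism class) and fixes $\mathbf{1}^{\ss}_{1,l}$ (since $\tau_\mathcal{M}$ permutes $Pic^l(\E)$). Hence $[T_{0,d},\mathbf{1}^{\ss}_{1,l}]$ is $Pic^0(\E)$-invariant, and because $Pic^0(\E)$ acts transitively on $Pic^{l+d}(\E)$, this commutator equals $c \cdot \mathbf{1}^{\ss}_{1,l+d}$ for a unique scalar $c$.

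Third, I would extract $c$ from the non-degenerate Green pairing on $\H_\E$. Using the Hopf property $(ab,c)_G = (a \otimes b, \widetilde{\Delta}(c))_G$ together with the coproduct formula (\ref{E:Hall1}), the term $(\mathbf{1}^{\ss}_{1,l}T_{0,d}, \mathbf{1}^{\ss}_{1,l+d})_G$ vanishes, because no summand of $\widetilde{\Delta}(\mathbf{1}^{\ss}_{1,l+d})$ has right-hand factor of bidegree $(0,d)$; the remaining term isolates the $k=d$ summand and equals $(T_{0,d}, \theta_{0,d}\boldsymbol{\kappa}_{1,l})_G \cdot (\mathbf{1}^{\ss}_{1,l}, \mathbf{1}^{\ss}_{1,l})_G$. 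The $\boldsymbol{\kappa}_{1,l}$ factor contributes trivially since $T_{0,d}$ has no $\boldsymbol{\mathcal{K}}$-part, and primitivity of $T_{0,d}$ (Lemma~\ref{L:Hall1}(a)) implies $(T_{0,d}, T_{0,a_1}\cdots T_{0,a_k})_G = 0$ for $k \geq 2$, so only the linear term of $\theta_{0,d} = (v^{-1}-v)T_{0,d} + \cdots$ survives, giving $(T_{0,d}, \theta_{0,d})_G = (v^{-1}-v)(T_{0,d}, T_{0,d})_G$. Since $|Pic^l(\E)| = |Pic^{l+d}(\E)|$, the two norm factors cancel, and substituting Lemma~\ref{L:Hall1}(c) along with the identity $(v^{-1}-v)/(q-1) = v$ (using $q = v^{-2}$) yields $c = v^d \#\E(\mathbb{F}_{q^d})[d]/d$, as required.

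The main obstacle is the pairing computation in step three, specifically the cleanest justification that $(T_{0,d}, T_{0,a_1}\cdots T_{0,a_k})_G$ vanishes for $k \geq 2$. This I would handle by iterating the Hopf property and observing that both $(1, T_{0,a})_G$ and $(\boldsymbol{\kappa}_{0,d}, T_{0,a})_G$ vanish for $a > 0$, which follows immediately from the $\delta$-function on basis elements in the definition of $(\cdot,\cdot)_G$.
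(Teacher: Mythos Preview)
Your proof is correct and follows the same overall architecture as the paper's: first show the commutator is a scalar multiple of $\mathbf{1}^{\ss}_{1,l+d}$, then extract the scalar via Green's Hopf pairing and Lemma~\ref{L:Hall1}. Step three is essentially identical to the paper's computation, including the reduction $(\theta_{0,d},T_{0,d})_G=(v^{-1}-v)(T_{0,d},T_{0,d})_G$ via primitivity.

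The one genuine difference lies in step two. The paper does not invoke the $Pic^0(\E)$-action; instead it uses the triangular decomposition $\UU_{\E}\simeq \UU^>_{\E}\otimes\UU^0_{\E}$: since $T_{0,d}$ and $\mathbf{1}^{\ss}_{1,l}$ both lie in $\UU_{\E}$, the commutator can be written $\sum_n \mathbf{1}^{\ss}_{1,n}\,u_n$ with $u_n\in\UU^0_{\E}$, and membership in $\H_{Vec(\E)}$ (Proposition~\ref{prop:Hecket0d}) forces all $u_n$ to be scalars, leaving a single term by degree. Your $Pic^0(\E)$-equivariance argument is more elementary---it bypasses the structure theory of the spherical subalgebra and works directly inside $\H_{Vec(\E)}[1,l+d]$---while the paper's argument has the advantage of staying entirely within the algebraic framework already set up. Both reach the same conclusion with comparable effort.
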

\begin{proof} The proof is very close to \cite[Lemma~4.12]{BS}, 
see also \cite[Theorem~6.3]{SV1}.
Because the left hand side is an element of
$\UU_{\E}$, we may write
$$[T_{0,d},\mathbf{1}^{\ss}_{1,l}]=\sum_n \mathbf{1}^{\ss}_{1,n} u_n,\quad
u_n \in \UU^0_{Tor(\E)}.$$
But because $[T_{0,d},\mathbf{1}^{\ss}_{1,l}]=T_{0,d}\bullet \mathbf{1}^{\ss}_{1,l} \in \H_{Vec(\E)}$ 
we must have in fact 
$$[T_{0,d},
\mathbf{1}^{\ss}_{1,l}]=c_{l,d} \mathbf{1}^{\ss}_{1,l+d},\quad c_{l,d} \in
\CC.$$ For this, we write
\begin{equation*}
\begin{split}
c_{l,d} (\mathbf{1}^{\ss}_{1,l+d},\mathbf{1}^{\ss}_{1,l+d})_G&=(\mathbf{1}^{\ss}_{1,l+d},[T_{0,d}, \mathbf{1}^{\ss}_{1,l}])_G\\
&=(\mathbf{1}^{\ss}_{1,l+d},T_{0,d}, \mathbf{1}^{\ss}_{1,l})_G\\
&=(\widetilde{\Delta}(\mathbf{1}^{\ss}_{1,l+d}), T_{0,d} \otimes \mathbf{1}^{\ss}_{1,l})_G\\
&=(\theta_{0,d}\boldsymbol{\kappa}_{1,l}, T_{0,d})_G
(\mathbf{1}^{\ss}_{1,l},\mathbf{1}^{\ss}_{1,l})_G
\end{split}
\end{equation*}
and hence $c_{l,d}=(\theta_{0,d},T_{0,d})_G$. Developing
$\theta_{0,d}$ in terms of the $T_{0,n}$'s, using the Hopf property
of $(\;,\;)_G$ and Lemma~\ref{L:Hall1} a) we obtain
$$(\theta_{0,d},T_{0,d})_G=(v^{-1}-v)(T_{0,d},T_{0,d})_G=\frac{v^d \#\E(\mathbb{F}_{q^d})[d]}{d}$$
as wanted.
\end{proof}

\vspace{.1in}

\begin{cor}\label{C:corred} Define complex numbers $\xi_k$, $k \geqslant 0$ 
by $\omega(\theta_{0,k}
\mathbf{1}^{\ss}_{1,l})=\xi_k \mathbf{1}^{\ss}_{1,l+k}$.  Then we have

$(a)$ for $l \in \Z$ and $d \geqslant 0$,
\begin{equation*}
\theta_{0,d} \mathbf{1}^{\ss}_{1,l}=\sum_{n=0}^d \xi_{n}
\mathbf{1}^{\ss}_{1,l+n}\theta_{0,d-n},
\end{equation*}

$(b)$ as a series in $\CC[[s]]$ we have
\begin{equation*}
\sum_{d \geqslant 0} \xi_d s^d=\frac{\zeta_{\E}(s)}{\zeta_{\E}(q^{-1}s)}.
\end{equation*}
\end{cor}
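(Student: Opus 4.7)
The plan is to verify first that $\xi_k$ is well-defined as a scalar depending only on $k$, then prove (b) via a generating-function computation, and finally prove (a) by induction using the triangular decomposition of $\UU_{\E}$.

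\emph{Well-definedness of $\xi_k$.} Using the exponential relation of Section~1.10, expand $\theta_{0,k}$ as a polynomial in $\{T_{0,m}\}_{m\geq 1}$. By Proposition~\ref{prop:Hecket0d} and Lemma~\ref{L:Hall2}, $T_{0,m}\bullet \mathbf{1}^{\ss}_{1,l}=t_m\mathbf{1}^{\ss}_{1,l+m}$ with $t_m:=v^m\#\E(\mathbb{F}_{q^m})[m]/m$ independent of $l$. Since the Hecke action is an algebra action, iterating yields $\theta_{0,k}\bullet \mathbf{1}^{\ss}_{1,l}=\omega(\theta_{0,k}\mathbf{1}^{\ss}_{1,l})=\xi_k\mathbf{1}^{\ss}_{1,l+k}$ with $\xi_k\in\CC$ depending only on $k$.

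\emph{Part (b).} The operators $T_{0,m}\bullet$ act diagonally on the subspace $\bigoplus_l\CC\mathbf{1}^{\ss}_{1,l}$, hence mutually commute there, so the exponential formula $1+\sum_d \theta_{0,d}s^d=\exp((v^{-1}-v)\sum_m T_{0,m}s^m)$ yields
\[
1+\sum_{d\geq 1}\xi_d s^d=\exp\Bigl((v^{-1}-v)\sum_{m\geq 1} t_m s^m\Bigr).
\]
Using $v^2=q^{-1}$ gives $(v^{-1}-v)v^m[m]=1-q^{-m}$, so the exponent equals $\sum_{m\geq 1}\frac{(1-q^{-m})\#\E(\mathbb{F}_{q^m})}{m}s^m=\log\zeta_{\E}(s)-\log\zeta_{\E}(q^{-1}s)$; exponentiating proves (b).

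\emph{Part (a).} We induct on $d$, the case $d=0$ being trivial. By the triangular decomposition $\UU_{\E}\simeq \UU^>_{\E}\otimes \UU^0_{\E}$ and the one-dimensionality of the $(1,m)$-graded component of $\UU^>_{\E}$ (spanned by $\mathbf{1}^{\ss}_{1,m}$), we may write
\[
\theta_{0,d}\mathbf{1}^{\ss}_{1,l}=\sum_{n=0}^d \mathbf{1}^{\ss}_{1,l+n}\,y_n
\]
for unique $y_n\in\UU^0_{\E}$ of degree $(0,d-n)$. Applying $\omega$ identifies $y_d=\xi_d$. To identify $y_n$ for $n<d$, we compute the left-rank-$1$ part of $\widetilde{\Delta}$ applied to both sides. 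Using Lemma~\ref{L:Hall1}(b) and (\ref{E:Hall1}), the LHS yields $\sum_k \theta_{0,k}\mathbf{1}^{\ss}_{1,l}\boldsymbol{\kappa}_{0,d-k}\otimes\theta_{0,d-k}$, while the RHS yields $\sum_n (\mathbf{1}^{\ss}_{1,l+n}\otimes 1)\widetilde{\Delta}(y_n)$. Extracting the component whose right tensor factor has degree $(0,d-n')$ for $n'<d$, and invoking the inductive hypothesis to rewrite $\theta_{0,n'}\mathbf{1}^{\ss}_{1,l}$, yields a recursion that uniquely forces $y_n=\xi_n\theta_{0,d-n}$.

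The main obstacle is the coproduct bookkeeping in (a): one must carefully isolate bigraded components, keep track of the group-like $\boldsymbol{\kappa}$'s appearing in (\ref{E:Hall1}), and feed in the inductive hypothesis at the right step. Coassociativity then forces the complete commutation relation, with coefficients $\xi_n$ matching those computed in Step (b).
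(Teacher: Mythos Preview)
Your argument for the well-definedness of $\xi_k$ and for part (b) is essentially identical to the paper's: both expand $\theta_{0,k}$ via the exponential formula and use Lemma~\ref{L:Hall2} to turn the iterated Hecke action into the generating-series identity $\exp\big((v^{-1}-v)\sum_m c_m s^m\big)=\zeta_{\E}(s)/\zeta_{\E}(q^{-1}s)$.

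For part (a), your coproduct-and-induction route is correct but considerably more involved than what the paper does. The paper simply observes that (a) is immediate from Lemma~\ref{L:Hall2} and the definition of $\theta_{0,d}$: writing $\theta(s)=\exp\big((v^{-1}-v)\sum_m T_{0,m}s^m\big)$ and using $[T_{0,m},\mathbf{1}^{\ss}_{1,l}]=c_m\mathbf{1}^{\ss}_{1,l+m}$, one gets
\[
\theta(s)\,\mathbf{1}^{\ss}_{1,l}\,\theta(s)^{-1}
=\exp\!\big((v^{-1}-v)\,\mathrm{ad}\,T(s)\big)\,\mathbf{1}^{\ss}_{1,l}
=\sum_{n\geq 0}\xi_n s^n\,\mathbf{1}^{\ss}_{1,l+n},
\]
since the iterated adjoint action on $\mathbf{1}^{\ss}_{1,l}$ produces exactly the exponential series that defines the $\xi_n$ in (b). Multiplying on the right by $\theta(s)$ and extracting the coefficient of $s^d$ gives (a) in one stroke, with no coproduct bookkeeping and no induction. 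Your approach does have the merit of exhibiting how the triangular decomposition interacts with the bialgebra structure, but the exponential conjugation is much shorter. Incidentally, your own coproduct argument can be streamlined: rather than extracting bigraded pieces and running a double induction, apply $\omega\otimes\mathrm{id}$ to the rank-one left components you wrote down. On the left this yields $\sum_k\xi_k\mathbf{1}^{\ss}_{1,l+k}\otimes\theta_{0,d-k}$ by the very definition of $\xi_k$, and on the right it yields $\sum_n\mathbf{1}^{\ss}_{1,l+n}\otimes y_n$ (only the counit term of $\widetilde{\Delta}(y_n)$ survives $\omega$), forcing $y_n=\xi_n\theta_{0,d-n}$ directly.
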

\begin{proof} Statement $(a)$ is a consequence of Lemma~\ref{L:Hall2} and the definition of $\theta_{0,d}$.
We prove $(b)$. By
Lemma~\ref{L:Hall2} we have $$\omega(T_{0,d} \mathbf{1}^{\ss}_{1,l})=c_d
\mathbf{1}^{\ss}_{1,l+d},\quad c_d=v^d \#\E(\mathbb{F}_{q^d})[d]/{d}.$$
It follows that $\omega( T_{0,d_1} \cdots
T_{0,d_r} \mathbf{1}^{\ss}_{1,l})=c_{d_1} \cdots
c_{d_r}\mathbf{1}^{\ss}_{1,l+d_1+\cdots+d_r}$. Now, we have
\begin{equation*}
\sum_{d \geqslant 0} \xi_d s^d= \exp \bigg( (v^{-1}-v)\sum_{d \geqslant 1} c_d
s^d\bigg) =\exp \bigg( \sum_{d \geqslant 1} \#\E(\mathbb{F}_{q^d})
(1-v^{2d})\frac{s^d}{d} \bigg)
=\frac{\zeta_{\E}(s)}{\zeta_{\E}(q^{-1}s)}.
\end{equation*}
\end{proof}

\vspace{.2in}

\paragraph{\textbf{1.9.}} 
In order to give presentations of ${\UU}_{\E}$ and $\UU^>_{\E}$
we introduce the so-called \textit{constant term map}. For $r \geqslant 1$ we set
$$J_r: \UU^>_{\E}[r] \to  \UU^>_{\E}[1] \widehat{\otimes} \cdots \widehat{\otimes} \UU^>_{\E}[1], \qquad u \mapsto (\omega \otimes \cdots \otimes \omega) \Delta_{1, \ldots, 1}(u)$$
and denote by $J~: \UU^>_{\E} \to \bigoplus_r \big(\UU^>_{\E}[1]\big)^{\widehat{\otimes} r}$ the sum of the maps $J_r$.   
Writing 
$$J(u)=\sum_{\mathcal{L}_1, \ldots, \mathcal{L}_r} u(\mathcal{L}_1, \ldots, \mathcal{L}_r) 1_{\mathcal{L}_1} \otimes \cdots \otimes 1_{\mathcal{L}_r},$$ 
we have
\begin{equation}\label{E:constant}
u(\mathcal{L}_1, \ldots, \mathcal{L}_r)=
\frac{1}{(q-1)^r}(J(u), 1_{\mathcal{L}_1} \otimes \cdots \otimes 1_{\mathcal{L}_r})_G = \frac{1}{(q-1)^r}( u, 1_{\mathcal{L}_1} \cdots 1_{\mathcal{L}_r})_G
\end{equation}
which coincides with the standard notion of constant term in the theory of automorphic forms (up to the factor $(q-1)^{-r}$), see \cite{Kap}.
Observe that because $J_r$ lands in $\big(\UU^>_{\E}[1]\big)^{\widehat{\otimes} r}$ and $\UU^>_{\E}[1]=\bigoplus_d \mathbf{1}^{\ss}_{1,d}$, the function $u(\mathcal{L}_1, \ldots, \mathcal{L}_r)$ only depends on the respective degrees $d_1, \ldots, d_r$ of the line bundles $\mathcal{L}_1, \ldots, \mathcal{L}_r$.

\vspace{.1in}

\begin{lem}\label{L:constantinj} The constant term map $J:\UU^>_{\E} \to \bigoplus_r \big(\UU^>_{\E}[1]\big)^{\widehat{\otimes} r}$ is injective.
\end{lem}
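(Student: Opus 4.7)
The plan is to deduce injectivity from formula (\ref{E:constant}) together with the positive-definiteness of Green's Hermitian scalar product, treating each graded component $\UU^>_{\E}[r]$ separately. Fix $r \geqslant 1$ and suppose $u \in \UU^>_{\E}[r]$ lies in $\ker J_r$. Expanding $J_r(u) = \sum u(\mathcal{L}_1, \ldots, \mathcal{L}_r)\, 1_{\mathcal{L}_1} \otimes \cdots \otimes 1_{\mathcal{L}_r}$, the vanishing of every coefficient $u(\mathcal{L}_1, \ldots, \mathcal{L}_r)$ translates via (\ref{E:constant}) into the identity $(u, 1_{\mathcal{L}_1} \cdots 1_{\mathcal{L}_r})_G = 0$ for every tuple of line bundles $\mathcal{L}_i \in Pic(\E)$.

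Summing these identities over $\mathcal{L}_i \in Pic^{d_i}(\E)$ for any choice of degrees yields $(u, \mathbf{1}^{\ss}_{1,d_1} \cdots \mathbf{1}^{\ss}_{1,d_r})_G = 0$ for every $(d_1, \ldots, d_r) \in \Z^r$. By the very definition of $\UU^>_{\E}$, the length-$r$ products $\mathbf{1}^{\ss}_{1,d_1} \cdots \mathbf{1}^{\ss}_{1,d_r}$ span $\UU^>_{\E}[r]$ linearly; hence $u$ is $(\cdot,\cdot)_G$-orthogonal to all of $\UU^>_{\E}[r]$, and in particular $(u,u)_G = 0$.

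To conclude $u = 0$ I would invoke positive-definiteness: in the basis $\{1_{\mathcal{F}}\}$ of $\H_{\E}$ the Green form is diagonal with positive entries $1/\#\text{Aut}(\mathcal{F})$, so its Hermitian extension is positive definite on $\H_{\E}$, and its restriction to $\UU^>_{\E}[r] \subset \H_{\E}$ remains positive definite; thus $(u,u)_G = 0$ forces $u = 0$. I expect this last step to be the main delicate point: one cannot merely invoke nondegeneracy of $(\cdot,\cdot)_G$ on the ambient $\H_{\E}$, since $u$ is known to be orthogonal only to $\UU^>_{\E}[r]$ and not a priori to all of $\H_{\E}[r]$, and over $\CC$ the restriction of a nondegenerate symmetric bilinear form to a subspace can perfectly well become degenerate---it is the positive-definiteness of the Hermitian Green pairing that saves the day.
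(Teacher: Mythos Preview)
Your argument is correct and follows the same route as the paper's proof: both start from (\ref{E:constant}), deduce that $u$ is Green-orthogonal to all products $\mathbf{1}^{\ss}_{1,d_1}\cdots\mathbf{1}^{\ss}_{1,d_r}$, and hence to all of $\UU^>_{\E}[r]$. The only difference is in the final step. The paper invokes as a black box the nondegeneracy of $(\cdot,\cdot)_G$ restricted to $\UU^>_{\E}$, citing \cite[Thm.~4.52]{SLectures}. You instead argue directly from positive-definiteness: since $(\cdot,\cdot)_G$ is diagonal on $\H_{\E}$ in the basis $\{1_{\mathcal{F}}\}$ with strictly positive entries, it is a positive-definite Hermitian form, and this property survives restriction to any subspace; $(u,u)_G=0$ then forces $u=0$. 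Your observation that mere nondegeneracy on the ambient space would not suffice is spot-on, and your route is more elementary and self-contained than the paper's citation --- indeed the cited nondegeneracy result is itself proved by exactly this positive-definiteness argument.
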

\begin{proof} Let $u \in \UU^>_{\E}[r]$ be in the kernel of $J_r$. Then by (\ref{E:constant}) we have $(u, 1_{\mathcal{L}_1} \cdots 1_{\mathcal{L}_r})_G=0$ for all $r$-tuples of line bundles $(\mathcal{L}_1, \ldots, \mathcal{L}_r)$. In particular, 
$$(u, \mathbf{1}^{\ss}_{1,d_1} \cdots \mathbf{1}^{\ss}_{1,d_r})_G=0,\quad
(d_1, \ldots, d_r) \in \Z^r.$$ By construction $\UU^>_{\E}$ is generated by the elements
$\mathbf{1}^{\ss}_{1,d}$ hence $(u, \UU^>_{\E})_G=0$. But the restriction of $(\;,\;)_G$ to $\UU^>_{\E}$ is known to be nondegenerate, see e.g., \cite[Thm. 4.52]{SLectures}. Therefore $u=0$ and $J_r$ is injective as wanted.\end{proof}

\vspace{.1in}

\noindent The objective is now to determine as precisely as possible the image 
of $J$ and to write the product and coproduct structure of $\UU^>_{\E}$
in terms of $\bigoplus_r\big(\UU^>_{\E}[1]\big)^{\widehat{\otimes} r}$. 
For this it will be convenient to identify $\UU^>_{\E}[1]$ with 
$\CC[x^{\pm 1}]$ via the assignment 
$$\mathbf{1}^{\ss}_{1,d} \mapsto x^d,\quad d\in\ZZ.$$ 
Thus we have 
\begin{equation}\label{E:identishuffle}
\begin{split}
&\big(\UU^>_{\E}[1]\big)^{\otimes r}\simeq\CC[x_1^{\pm 1},\ldots,x_r^{\pm 1}],
\cr
&\big(\UU^>_{\E}[1]\big)^{\widehat{\otimes} r} \simeq \CC\big[x_1^{\pm 1}, \ldots, x_r^{\pm 1}\big] \big[\big[ x_1/x_2, \ldots, x_{r-1}/x_r\big]\big].
\end{split}
\end{equation}
As it turns out, the (co)algebra structure of $\UU^>_{\E}$ may be extended to the whole of $\bigoplus_r\big(\UU^>_{\E}[1]\big)^{\widehat{\otimes} r},$ where it is nicely expressed as a shuffle algebra. Before writing the definition of these shuffles algebras, we begin with a few notations. 
Let $\mathfrak{S}_{r}$ be the group of
permutations of $\{1, \ldots, r\}.$ 
If $w\in \mathfrak{S}_{r}$ and $P(z_1, \ldots, z_r)$ a
function in $r$ variables then we set $wP(z_1, \ldots,
z_r)=P(z_{w(1)}, \ldots, z_{w(r)})$. Let
$$Sh_{r,s}=\{w \in \mathfrak{S}_{r+s}\;;\; w(i) < w(j)\;\text{if}\; 1 \leqslant i < j \leqslant r \;\text{or}\;
r < i<j\leqslant r+s\}$$ be the set of $(r,s)$-shuffles, i.e., the set of minimal lenght representatives
of the left cosets in $\mathfrak{S}_{r+s}/\mathfrak{S}_{r}\times\mathfrak{S}_{s}$. Write
$$I_{w}=\{(i,j)\;;\; 1 \leqslant i   < j \leqslant r+s,\; w^{-1}(i) > r \geqslant w^{-1}(j)\},\quad
w\in Sh_{r,s}.$$
Finally, let $h(z) \in \CC(z)$ be a fixed rational function. 
We define an associative algebra $\mathbf{F}_{h(z)}$ as follows.
As a vector space
$$\mathbf{F}_{h(z)}=\bigoplus_r \CC\big[x_1^{\pm 1}, \ldots, x_r^{\pm 1}\big] \big[\big[ x_1/x_2, \ldots, x_{r-1}/x_r\big]\big],$$ and the multiplication is given by
\begin{equation}\label{E:shuffle1}
P(x_1, \ldots, x_r) \star Q(x_1, \ldots, x_s)=
\sum_{\sigma \in Sh_{r,s}} \hspace{-.04in}
h(x_1,\dots,x_{r+s})\; 
\sigma \big(P(x_1, \ldots, x_r)
Q(x_{r+1}, \ldots, x_{r+s})\big).
\end{equation}
Here $$h_\sigma(x_1,\dots,x_{r+s})=\prod_{(i,j) \in I_{\sigma}}h(x_i/x_j),$$
and the rational function $h(x_i/x_j)$ is developed as a Laurent series in 
$x_1/x_2, \ldots, x_{r-1}/x_r$.
Note that the product 
(\ref{E:shuffle1}) is well-defined, i.e., it lands in the right completion, 
because the sum ranges over $(r,s)$-shuffles.
We equip also $\mathbf{F}_{h(z)}$ with a coproduct $\Delta~: \mathbf{F}_{h(z)} \to \mathbf{F}_{h(z)} \widehat{\otimes} \mathbf{F}_{(h(z)}$ defined by
\begin{equation}\label{E:coprodshuffle}
\Delta_{s,t}(x_1^{i_1} \cdots x_{r}^{i_r})=x_1^{i_1} \cdots x_s^{i_s} \otimes x_1^{i_{s+1}} \cdots x_{t}^{i_{s+t}},
\quad \Delta=\bigoplus_{r=s+t}\Delta_{s,t}.
\end{equation}
The subspace $\mathbf{F}_{h(z)}^{rat}$ of $\mathbf{F}_{h(z)}$ consisting of Laurent series which are expansions of rational functions forms a subalgebra, which is moreover stable under the coproduct.
We are ready to give a first description of $\UU^>_{\E}$ as a shuffle algebra. 
Recall that we have identified 
$\bigoplus_r\big(\UU^>_{\E}[1]\big)^{\widehat{\otimes} r}$ 
with the vector space $\mathbf{F}_{h(z)}$ via (\ref{E:identishuffle}).
 
 \vspace{.1in}
 
\begin{prop}\label{P:shuffle1} Set 
\begin{equation}\label{E:hx(z)}
h_{\E}(z)=q^{1-g}\zeta_{\E}(z)/\zeta_{\E}(q^{-1}z).
\end{equation}
The constant term map $J: \UU^>_{\E} \to \mathbf{F}_{h_{\E}(z)}$ is an algebra morphism such that 
$$(J_s \otimes J_t) \circ (\omega \otimes \omega)\circ\Delta_{s,t}=\Delta_{s,t} \circ J_r,\quad
r=s+t.$$
\end{prop}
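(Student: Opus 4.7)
The plan is to prove the two assertions separately, deducing both from the bialgebra structure of $(\widetilde{\H}_\E,\cdot,\widetilde{\Delta})$ together with Corollary~\ref{C:corred}.

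For the coproduct identity $(J_s \otimes J_t)\circ(\omega \otimes \omega)\circ\Delta_{s,t} = \Delta_{s,t} \circ J_r$, I would start from the pairing characterisation~(\ref{E:constant}), namely $J_r(u)(\mathcal{L}_1,\dots,\mathcal{L}_r) = (q-1)^{-r}(u,1_{\mathcal{L}_1}\cdots 1_{\mathcal{L}_r})_G$. Grouping $1_{\mathcal{L}_1}\cdots 1_{\mathcal{L}_r}$ as $(1_{\mathcal{L}_1}\cdots 1_{\mathcal{L}_s})\cdot(1_{\mathcal{L}_{s+1}}\cdots 1_{\mathcal{L}_r})$ and invoking the Hopf property of $(\,,\,)_G$ with respect to $\widetilde{\Delta}$ transforms the right-hand side into a pairing of $\widetilde{\Delta}_{s,t}(u)$ against $(1_{\mathcal{L}_1}\cdots 1_{\mathcal{L}_s})\otimes(1_{\mathcal{L}_{s+1}}\cdots 1_{\mathcal{L}_r})$. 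Because Green's form vanishes on pairs of non-isomorphic sheaves and line-bundle monomials are supported on vector bundles, only the vector-bundle component of $\widetilde{\Delta}_{s,t}(u)$ contributes; this is exactly $(\omega \otimes \omega)\Delta_{s,t}(u)$, since the $\boldsymbol{\kappa}$-factors that distinguish $\widetilde\Delta$ from $\Delta$ are absorbed by $\omega$. A final application of~(\ref{E:constant}) to each tensor factor delivers the required identity.

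For the algebra morphism, since $\UU^>_\E$ is generated by the $\mathbf{1}^{\ss}_{1,d}$ and $\star$ is associative, it suffices to prove $J_{r+s}(uv)=J_r(u)\star J_s(v)$ when $u$ and $v$ are products of such generators. Multiplicativity of $\widetilde{\Delta}$ implies that the iterated coproduct of $uv$ factors as the product of the iterated coproducts of $u$ and $v$; projecting to rank $(1,\dots,1)$ in $r+s$ tensor slots forces each of the $r+s$ generators $\mathbf{1}^{\ss}_{1,d_i}$ in $uv$ to contribute its rank-$1$ piece to exactly one slot, the other slots receiving rank-$0$ tails $\theta_{0,l}\boldsymbol{\kappa}_{1,\star}$ coming from~(\ref{E:Hall1}). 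These choices are naturally indexed by the $(r,s)$-shuffles $\sigma \in Sh_{r,s}$.

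The main obstacle is the bookkeeping of scalars after the projection by $\omega^{\otimes(r+s)}$. For each shuffle $\sigma$, the $\theta$-tails and $\boldsymbol{\kappa}$-tails produced by~(\ref{E:Hall1}) must be commuted past the rank-$1$ factors $\mathbf{1}^{\ss}_{1,d_i}$ using Corollary~\ref{C:corred}(a) for the $\theta$'s and the relation $\boldsymbol{\kappa}_{1,d}\mathbf{1}^{\ss}_{1,l} = q^{1-g}\mathbf{1}^{\ss}_{1,l}\boldsymbol{\kappa}_{1,d}$ for the $\boldsymbol{\kappa}$'s. The projection $\omega$ then kills every non-trivial $\theta$, producing one factor of $q^{1-g}\xi_{n_{ij}}$ for each pair $(i,j) \in I_\sigma$. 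Using the formula $h_\E(z)=q^{1-g}\sum_l \xi_l z^l$ supplied by Corollary~\ref{C:corred}(b), these scalars assemble into $\prod_{(i,j)\in I_\sigma} h_\E(x_i/x_j)$, matching~(\ref{E:shuffle1}). As a sanity check, the rank-$2$ case yields $J_2(\mathbf{1}^{\ss}_{1,d_1}\mathbf{1}^{\ss}_{1,d_2}) = \mathbf{1}^{\ss}_{1,d_1}\otimes\mathbf{1}^{\ss}_{1,d_2} + q^{1-g}\sum_{l\geq 0}\xi_l\,\mathbf{1}^{\ss}_{1,d_2+l}\otimes\mathbf{1}^{\ss}_{1,d_1-l}$, which coincides with $x^{d_1}\star x^{d_2}$; the general case is an induction tracking the combinatorics of shuffles and the order in which tails are commuted past leading factors.
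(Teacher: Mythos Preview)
Your argument is correct and, for the multiplicative statement, follows the paper's route closely: both rely on the multiplicativity of $\widetilde{\Delta}$, the explicit coproduct formula~(\ref{E:Hall1}) for $\mathbf{1}^{\ss}_{1,d}$, and the straightening provided by Corollary~\ref{C:corred}. The paper packages this slightly differently, computing $J_r(\mathbf{1}^{\ss}_{1,d_1}\cdots\mathbf{1}^{\ss}_{1,d_r})$ in one shot as a sum over all of $\mathfrak{S}_r$ (equation~(\ref{E:Hall34})) rather than as an inductive $(r,s)$-shuffle step, and it introduces the shift operator $\gamma_i$ (multiplication by $x_i$) to make the emergence of $h_\E(x_i/x_j)$ transparent; but the computational content is identical to what you describe.

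For the coproduct identity the two arguments genuinely diverge. You prove it via the pairing characterisation~(\ref{E:constant}) and the Hopf property of Green's form, observing that products of line bundles are supported on vector bundles so that only $(\omega\otimes\omega)\Delta_{s,t}(u)$ survives the pairing. The paper instead invokes the operator identity
\[
\omega^{\otimes r}\circ\widetilde{\Delta}_{(1^r)} = (\omega^{\otimes s}\otimes\omega^{\otimes t})\circ\widetilde{\Delta}_{(1^s),(1^t)}\circ(\omega\otimes\omega)\circ\widetilde{\Delta}_{s,t},
\]
which is a direct consequence of coassociativity together with the compatibility of $\omega$ with iterated coproducts. Your route is perhaps more conceptual (it explains \emph{why} the identity holds in terms of the bilinear form), while the paper's is purely formal and avoids any appeal to the scalar product; either suffices here.
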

\begin{proof} Because $\UU^>_{\E}$ is generated in degree one, to show that $J$ is a algebra homomorphism
it is enough to prove that for  $(d_1, \ldots,d_r) \in \Z^r$ we have
\begin{equation}\label{E:Hall11}
J_r(\mathbf{1}^{\ss}_{1,d_1} \cdots \mathbf{1}^{\ss}_{1,d_r})=J_1(\mathbf{1}^{\ss}_{1,d_1}) \star \cdots \star 
J_1(\mathbf{1}^{\ss}_{1,d_{r}})=x_1^{d_1} \star \cdots \star x_1^{d_r}.
\end{equation}
We will do this by computing the left hand side explicitly. We have 
\begin{equation}\label{E:Hall23}
\begin{split}
J_r(\mathbf{1}^{\ss}_{1,d_1} \cdots \mathbf{1}^{\ss}_{1,d_r})&=\omega^{\otimes r} \widetilde{\Delta}_{(1^r)}(\mathbf{1}^{\ss}_{1,d_1} \cdots \mathbf{1}^{\ss}_{1,d_r})\\
&=\omega^{\otimes r} \sum_{\sigma \in \mathfrak{S}_r} \widetilde{\Delta}_{\delta_{\sigma(1)}}(\mathbf{1}^{\ss}_{1,d_1}) \cdots
 \widetilde{\Delta}_{\delta_{\sigma(r)}}(\mathbf{1}^{\ss}_{1,d_r}),
 \end{split}
 \end{equation}
 where by definition $(\delta_1, \cdots, \delta_r)$ is the standard basis of $\Z^r$. 
In the above, we have made use of the fact that $\widetilde{\Delta}$ is a morphism of algebras. 
Set $\theta_d=\theta_{0,d}$ in an effort to unburden the notation. 
Using (\ref{E:Hall1}) we get
\begin{equation}\label{E:Hall3}
\widetilde{\Delta}_{\delta_k}(\mathbf{1}^{\ss}_{1,l})=
\sum_{d_1, \ldots, d_{k-1} \geqslant 0} \theta_{d_1}
\boldsymbol{\kappa}_{1,l-d_1} \otimes
\cdots \otimes \theta_{d_{k-1}} \boldsymbol{\kappa}_{1,l-\sum_{j <k}d_j} 
\otimes \mathbf{1}^{\ss}_{1,l-\sum_{j<k}d_j}\otimes 1 \otimes\cdots\otimes 1.
\end{equation} 
In order to compute the projection to vector bundles $\omega^{\otimes r}$, we need to reorder all the products appearing in the tensor components on the 
right hand side of (\ref{E:Hall23}) and to put them in the normal form 
$\UU^>_{\E} \cdot \widetilde{\UU}^0_{\E}$, see (\ref{E:Hallpi}). For this, 
note that Corollary~\ref{C:corred} yields
\begin{equation}\label{E:Hall234}
\begin{split}
\sum_{u \geqslant 0} \theta_{u} \boldsymbol{\kappa}_{1,p-u} \mathbf{1}^{\ss}_{1,k} \otimes \mathbf{1}^{\ss}_{1,l-u}&=q^{1-g}
\sum_{u \geqslant 0} \theta_{u} \mathbf{1}^{\ss}_{1,k} \boldsymbol{\kappa}_{p-u}  \otimes \mathbf{1}^{\ss}_{1,l-u}\\
&=q^{1-g}\sum_{u \geqslant 0}\sum_{v=0}^u \xi_v \mathbf{1}^{\ss}_{k+v} \theta_{u-v}\boldsymbol{\kappa}_{1,p-u} \otimes \mathbf{1}^{ss}_{1,l-u}\\
&=q^{1-g}\sum_{v \geqslant 0} \sum_{w \geqslant 0} \xi_v \mathbf{1}^{\ss}_{k+v} \theta_w \boldsymbol{\kappa}_{1,p-v-w} \otimes \mathbf{1}^{\ss}_{1,l-v-w}
\end{split}
\end{equation}
Let us introduce the automorphism $\gamma$ of $\UU^>_{\E}[1]$ defined by $\gamma(\mathbf{1}^{\ss}_{1,n})=\mathbf{1}^{\ss}_{1,n+1}$, and let us denote by $\gamma_i$ the operator $\gamma$ acting on the $i$th component of the tensor product. Using this and Corollary~\ref{C:corred} we may rewrite the right hand side of (\ref{E:Hall234}) as
\begin{equation}\label{E:Hall2345}
\begin{split}
q^{1-g}\sum_{v \geqslant 0} \xi_v \big( \gamma_1\gamma_2^{-1} 
&\boldsymbol{\kappa}_{0,-1}\big)^v \sum_{w}\mathbf{1}^{\ss}_{k+v} \theta_w \boldsymbol{\kappa}_{1,p-w} \otimes \mathbf{1}^{\ss}_{1,l-w}=\\
&= h_{\E}\big(\gamma_1\gamma_2^{-1}\boldsymbol{\kappa}_{0,-1}\big) \sum_{w}\mathbf{1}^{\ss}_{k+v} \theta_w \boldsymbol{\kappa}_{1,p-w} \otimes \mathbf{1}^{\ss}_{1,l-w}.
\end{split}
\end{equation}
For $\sigma \in \mathfrak{S}_r$ there is one contribution of the form (\ref{E:Hall2345}) in (\ref{E:Hall23}) for each inversion $(i,j)$ of $\sigma$. Note that $\boldsymbol{\kappa}_{0,1}$ is central and we have $\omega(u \boldsymbol{\kappa}_{0,1})=\omega(u)$ for all $u$ so that we may discard it. Thus all together we get
\begin{equation}\label{E:Hall34}
\begin{split}
J_r(\mathbf{1}^{\ss}_{1,d_1} \cdots \mathbf{1}^{\ss}_{1,d_r})
 &=\sum_{\sigma \in \mathfrak{S}_r}\prod_{(i,j) \in I_{\sigma}} h_{\E}\big(\gamma_i\gamma_j^{-1}\big) \, \sigma ( \mathbf{1}^{\ss}_{1,d_1} \otimes  \cdots \otimes \mathbf{1}^{\ss}_{1,d_r}).
 \end{split}
 \end{equation}
Observe that after the identification (\ref{E:identishuffle}) with Laurent series, the operator $\gamma_i$ simply becomes the operator of multiplication by $x_i$. Hence we may write
\begin{equation}\label{E:Hall345}
\begin{split}
J_r(\mathbf{1}^{\ss}_{1,d_1} \cdots \mathbf{1}^{\ss}_{1,d_r})
 &=\sum_{\sigma \in \mathfrak{S}_r}\prod_{(i,j) \in I_{\sigma}} h_{\E}\big( x_i/x_j\big) \sigma ( x_{1}^{d_1}  \cdots x_{r}^{d_r})\\
& =x_1^{d_1} \star \cdots \star x_1^{d_r}
 \end{split}
 \end{equation}
as wanted. This proves that $J$ is a morphism of algebras. The statement regarding the coproduct is a direct consequence of the
easily checked relation $$\omega^{\otimes r}\circ \widetilde{\Delta}_{(1^r)}= (\omega^{\otimes s} \otimes \omega^{\otimes t})\circ\widetilde{\Delta}_{(1^s),(1^t)} \circ (\omega \otimes \omega)\circ \widetilde{\Delta}_{s,t}.$$ \end{proof}

\vspace{.1in}

\noindent By Proposition~\ref{P:shuffle1} above, $\UU^>_{\E}$ is isomorphic to the subalgebra $\mathbf{S}_{h_{\E}(z)}$ of $\mathbf{F}_{h_{\E}(z)}$ generated by the degree one component $\mathbf{F}_{h_{\E}(z)}[1]=\CC[x_1^{\pm 1}]$. Observe that $\mathbf{S}_{h_{\E}(z)} \subset \mathbf{F}^{rat}_{h_{\E}(z)}$.

\vspace{.2in}

\addtocounter{theo}{1}
\paragraph{\textbf{Remark~\thetheo}} The triple $(\mathbf{F}_{h(z)}, \star, \Delta)$ is not a bialgebra, just as it is wrong that $( \UU^>_{\E}, \cdot, (\omega \otimes \omega) \circ \Delta)$ is a bialgebra. 
Rather $(\mathbf{F}_{h(z)}, \star, \Delta)$ 
becomes a bialgebra after a suitable twist. This is very similar to the twist introduced by Lusztig in \cite{Lusbook}.
Define a new multiplication $\star$ on $\mathbf{F}_{h(z)} \otimes
\mathbf{F}_{h(z)}$ by introducing a correcting factor
\begin{equation}\label{E:twistcoporod}
\big( a \otimes P(x_1, \ldots, x_r) \big) \star \big(Q(x_1, \ldots,
x_s) \otimes b\big)= \big(a \star Q'(x_1, \ldots, x_s)\bigr) \otimes \bigl(P'(x_1,
\ldots, x_r) \star b\bigr),
\end{equation}
where
$$Q'(x_1, \ldots, x_s) \otimes P'(x_1, \ldots, x_r)=\bigg(\prod_{\substack{1 
\leqslant i \leqslant r \\
1 \leqslant j \leqslant s}} h_X(x_j \otimes x_i^{-1}) \bigg)
Q(x_1, \ldots, x_s) \otimes P(x_1, \ldots, x_r).$$
It follows from (\ref{E:shuffle1}) and (\ref{E:coprodshuffle}) that
$(\mathbf{F}_{h(z)}, \star, \Delta)$ is a twisted bialgebra, i.e.,
we have  $\Delta(u \star v)=\Delta(u) \star \Delta(v)$ for any $u,v \in \mathbf{F}_{h(z)}$.
Moreover $(\mathbf{S}_{h(z)}, \star, \Delta)$ is a sub-bialgebra.

\vspace{.2in}

\addtocounter{theo}{1}
\paragraph{\textbf{Remark~\thetheo}} Formula (\ref{E:Hall345}) is essentially the Gindikin-Karpelevich identity, see e.g., \cite{GindiKarp}.

\vspace{.2in}

\paragraph{\textbf{1.10.}} In this section we rephrase the results of  Section~1.9 in more
convenient terms. Namely, we identify $\UU^>_{\E}$ with another shuffle algebra, but this time in a space of
\textit{symmetric} polynomials. This is precisely the setting of \cite{FO}, which we now explain.
Let $g(z) \in \CC(z)$ be a rational function. For $r
\geqslant 1$ we put $g(x_1, \ldots, x_r)=\prod_{i<j} g(x_i/x_j)$.
Let us denote by
\begin{equation*}
\begin{split}
Sym_r: \CC(x_1, \ldots, x_r) \to \CC(x_1, \ldots,
x_r)^{\mathfrak{S}_r},\quad P(x_1, \ldots, x_r) \mapsto \sum_{w \in
\mathfrak{S}_r} w P(x_1, \ldots, x_{r})
\end{split}
\end{equation*}
the standard symmetrization operator and let us consider
the weighted symmetrization
\begin{equation*}
\begin{split}
\Psi_r: \CC[x_1^{\pm 1}, \ldots, x_r^{\pm r}] \to \CC(x_1, \ldots,
x_r)^{\mathfrak{S}_r},\quad P(x_1, \ldots, x_r) \mapsto Sym_r \big(
g(x_1, \ldots, x_r) P(x_1, \ldots, x_r)\big).
\end{split}
\end{equation*}
Let $\mathbf{A}_r$ be the image of $\Psi_r$.
There is a unique map $m_{r,s}: \mathbf{A}_r \otimes \mathbf{A}_{s}
\to \mathbf{A}_{r+s}$ which makes the following diagram commute
\begin{equation}\label{E:shufflediagram}
\xymatrix{ \CC[x_1^{\pm 1}, \ldots, x_r^{\pm 1}] \otimes \CC[x_1^{\pm 1}, \ldots, x_{s}^{\pm 1}]
\ar[r]^-{\Psi_r \otimes \Psi_{s}} \ar[d]_-{i_{r,s}} & \mathbf{A}_r \otimes \mathbf{A}_{s} \ar[d]_-{m_{r,s}} \\
 \CC[x_1^{\pm 1}, \ldots, x_{r+s}^{\pm 1}] \ar[r]^-{\Psi_{r+s}} &
 \mathbf{A}_{r+s}}.
\end{equation}
Here $i_{r,s}$ is the obvious isomorphism 
$$i_{r,s}(P(x_1, \ldots, x_r) \otimes Q(x_1, \ldots, x_s))=P(x_1, \ldots, x_r)Q(x_{r+1}, \ldots, x_{r+s}).$$
The maps $m_{r,s}$
endow the space $$\Ab_{g(z)}=\CC 1 \oplus \bigoplus_{r \geqslant
1} \mathbf{A}_r$$ with the structure of an associative algebra whose
product is given by
\begin{equation}\label{E:shuffle2}
P(x_1, \ldots, x_r) \star Q(x_1, \ldots, x_{s})=\sum_{w \in
Sh_{r,s}} w  \bigg( \hspace{-.1in}\prod_{\substack{1 \leqslant i
\leqslant r\\ r+1 \leqslant j \leqslant r+s}}
\hspace{-.15in}g(x_i/x_j) P(x_1, \ldots, x_r) Q(x_{r+1},
\ldots, x_{r+s})\bigg).
\end{equation}
Note that, by construction, the algebra $\Ab_{g(z)}$ is generated by the
subspace $\mathbf{A}_1=\CC[x_1^{\pm 1}]$. As before, the
shuffle product $m_{r,s}$ may be extended to the whole space
$\bigoplus_r \CC(x_1, \ldots, x_r)^{\mathfrak{S}_r}$.

\vspace{.1in}

\begin{prop}\label{P:kloop} Let $g(z)$ be a rational function such that
$h(z)=g(z^{-1})/g(z)$.
Then the assignment $x_1^l \mapsto x_1^l$ in degree one extends to an algebra
isomorphism $\mathbf{S}_{h(z)}\to\Ab_{g(z)}$.
\end{prop}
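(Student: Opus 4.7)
The plan is to realize the sought isomorphism explicitly as multiplication by the ``symmetrizing factor''
$$g(x_1,\dots,x_r)=\prod_{1\leq i<j\leq r}g(x_i/x_j).$$
For $P\in\mathbf{S}_{h(z)}[r]\subset\mathbf{F}_{h(z)}^{rat}[r]$, viewed as a rational function in $x_1,\dots,x_r$, I set $\Omega_r(P)=g(x_1,\dots,x_r)\cdot P$. Since $g(x_1)$ is an empty product equal to $1$, the map $\Omega_1$ is the identity $x_1^l\mapsto x_1^l$. Because both $\mathbf{S}_{h(z)}$ and $\mathbf{A}_{g(z)}$ are generated by their degree-one components, verifying that $\Omega=\bigoplus_r\Omega_r$ is an algebra homomorphism whose image lies in $\mathbf{A}_{g(z)}$ will suffice.

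The technical core is the identity
$$w\bigl(g(x_1,\dots,x_{r+s})\bigr)=g(x_1,\dots,x_{r+s})\prod_{(i,j)\in I_w}h(x_i/x_j)\qquad\text{for all }w\in Sh_{r,s}.$$
This follows immediately from the hypothesis $h(z)=g(z^{-1})/g(z)$ together with the defining property of a shuffle: $w$ preserves the internal orderings of the two blocks $\{1,\dots,r\}$ and $\{r+1,\dots,r+s\}$, so when comparing $\prod_{i<j}g(x_{w(i)}/x_{w(j)})$ with $\prod_{i<j}g(x_i/x_j)$, the pairs whose factor is inverted are precisely those in $I_w$, and each such inversion contributes the ratio $g(x_j/x_i)/g(x_i/x_j)=h(x_i/x_j)$.

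Combining this identity with the obvious factorization
$$g(x_1,\dots,x_{r+s})=g(x_1,\dots,x_r)\,g(x_{r+1},\dots,x_{r+s})\prod_{1\leq i\leq r<j\leq r+s}g(x_i/x_j),$$
a direct manipulation of formulas (\ref{E:shuffle1}) and (\ref{E:shuffle2}) yields
$$\Omega(P)\star_{\mathbf{A}}\Omega(Q)=\sum_{w\in Sh_{r,s}}w\bigl(g(x_1,\dots,x_{r+s})\,PQ\bigr)=g(x_1,\dots,x_{r+s})\,(P\star_{\mathbf{F}}Q)=\Omega(P\star_{\mathbf{F}}Q),$$
so $\Omega$ is an algebra homomorphism into symmetric rational functions. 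Its image lies in $\mathbf{A}_{g(z)}$ because $\Omega$ sends the iterated product $x_1^{d_1}\star\cdots\star x_1^{d_r}$ to $\Psi_r(x_1^{d_1}\cdots x_r^{d_r})$, and such elements span $\mathbf{A}_r$ by definition; the same observation yields surjectivity. Injectivity is immediate because $\Omega_r$ is multiplication by the nonzero rational function $g(x_1,\dots,x_r)$.

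The only point requiring care, rather than a genuine obstacle, is the interplay between the formal Laurent series realization of $\mathbf{F}_{h(z)}$ and the rational function realization of $\mathbf{A}_{g(z)}$. Proposition~\ref{P:shuffle1} places $\mathbf{S}_{h(z)}\subset\mathbf{F}_{h(z)}^{rat}$, so all the manipulations above are legitimate in the field of rational functions, and the $\mathfrak{S}_r$-symmetry of the resulting expressions follows automatically from their agreement with $\Psi_r$-images of Laurent polynomials on a spanning set.
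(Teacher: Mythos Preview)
Your proof is correct and follows the same approach as the paper: the paper defines the inverse map $\Xi:\mathbf{A}_{g(z)}\to\mathbf{F}_{h(z)}$, $P\mapsto g(x_1,\dots,x_r)^{-1}P$, notes that it is an algebra embedding by comparing the two shuffle formulas, and concludes since both sides are generated in degree one. You have simply worked in the opposite direction and spelled out the key identity $w(g(x_1,\dots,x_{r+s}))=g(x_1,\dots,x_{r+s})\,h_w$ that the paper leaves implicit.
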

\begin{proof} Consider the transformation
\begin{equation*}
\Xi~:\quad \Ab_{g(z)}
\to\mathbf{F}_{h(z)},\quad P(x_1, \ldots, x_r) \mapsto
g^{-1}(x_1,\dots,x_r) P(x_1, \ldots, x_r)
\end{equation*}
Comparing the definitions of the shuffle products (\ref{E:shuffle1})
and (\ref{E:shuffle2}) one checks that $\Xi$ is an algebra
embedding. It maps $\Ab_{g(z)}$ to $\mathbf{S}_{h(z)}$ since both are
generated in degree one. The proposition follows.
\end{proof}

\begin{cor}\label{C:kloop} Let $g_{\E}(z)$ be a rational function such that
\begin{equation}\label{E:gx(z)}
h_{\E}(z)=g_\E(z^{-1})/g_{\E}(z).
\end{equation} Then the map $\mathbf{1}^{\ss}_{1,l} \mapsto x^l_1$ 
extends to an algebra isomorphism
$\Upsilon_{\E}:\UU^>_{\E}\to\Ab_{g_X(z)}$.
\end{cor}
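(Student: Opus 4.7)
The plan is to obtain $\Upsilon_X$ as a composition of two isomorphisms that have already been (essentially) established. First, by Proposition~\ref{P:shuffle1} the constant term map $J:\UU^>_X\to\mathbf{F}_{h_X(z)}$ is an algebra homomorphism and by Lemma~\ref{L:constantinj} it is injective. Since $\UU^>_X$ is generated in degree one by the elements $\mathbf{1}^{\ss}_{1,l}$ and $J_1(\mathbf{1}^{\ss}_{1,l})=x_1^l$, its image is precisely the subalgebra $\mathbf{S}_{h_X(z)}\subset\mathbf{F}_{h_X(z)}$ generated by $\CC[x_1^{\pm 1}]$. Hence $J$ restricts to an algebra isomorphism $J:\UU^>_X\stackrel{\sim}{\to}\mathbf{S}_{h_X(z)}$ carrying $\mathbf{1}^{\ss}_{1,l}$ to $x_1^l$.

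Second, given any rational function $g_X(z)$ satisfying relation~(\ref{E:gx(z)}), Proposition~\ref{P:kloop} produces an algebra isomorphism $\mathbf{S}_{h_X(z)}\stackrel{\sim}{\to}\Ab_{g_X(z)}$ which is the identity on degree one (i.e. sends $x_1^l$ to $x_1^l$). Composing with $J$ yields the desired map $\Upsilon_X$.

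The only additional point to address is the existence of such a $g_X(z)$. One verifies directly that the explicit function from the introduction,
\[
g_X(z)=z^{g-1}\frac{1-qz}{1-z^{-1}}\prod_{i=1}^g(1-\a_iz^{-1})(1-\bar\a_iz^{-1}),
\]
satisfies $g_X(z^{-1})/g_X(z)=h_X(z)=q^{1-g}\zeta_X(z)/\zeta_X(q^{-1}z)$; this is a short manipulation using the relations $\a_i\bar\a_i=q$ to match factors between $\zeta_X(z)$ and $\zeta_X(q^{-1}z)$ after the substitution $z\mapsto z^{-1}$. No step here requires genuinely new input, so the proof is essentially a two-line composition; the only mild obstacle is checking the algebraic identity for $g_X(z)$, which is entirely routine.
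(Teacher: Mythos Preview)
Your proof is correct and follows exactly the approach intended by the paper: the corollary is an immediate consequence of the isomorphism $\UU^>_X\simeq\mathbf{S}_{h_X(z)}$ (noted right after Proposition~\ref{P:shuffle1}, using Lemma~\ref{L:constantinj}) composed with the isomorphism $\mathbf{S}_{h_X(z)}\simeq\Ab_{g_X(z)}$ of Proposition~\ref{P:kloop}. Your verification of the existence of a suitable $g_X(z)$ is not strictly needed for the corollary as stated (it begins ``Let $g_X(z)$ be\ldots''), and the paper carries out this check separately in the paragraph following the corollary.
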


\vspace{.2in}

\noindent
Using the functional equation for zeta functions 
$$\zeta_{\E}(q^{-1}z)=z^{2(g-1)}q^{1-g}\zeta_{\E}(z^{-1})$$ one checks that $z^{g-1}\zeta_\E(z^{-1})$ is a solution of (\ref{E:gx(z)}). The same is also true of $z^{g-1}\zeta_{\E}(z^{-1})k(z)$ for any function $k(z)$ satisfying $k(z)=k(z^{-1})$. It will actually be more convenient for
us to set 
\begin{equation}\label{E:deftzetax(z)}
\tilde{\zeta}_{\E}(z)=\zeta_{\E}(z)(1-qz)(1-qz^{-1})=\frac{1-qz^{-1}}{1-z}\prod_{i=1}^g (1-\a_iz)(1-\bar\a_iz),
\end{equation}
\begin{equation}\label{E:defgx(z)}
g_{\E}(z)=z^{g-1}\tilde{\zeta}_{\E}(z^{-1}).
\end{equation}
From now on we fix the above choice for $g_{\E}(z)$ and simply write 
$\mathbf{A}=\Ab_{g_{\E}(z)}$. Thus we have an algebra isomorphism
$$\Upsilon_{\E}:\UU^>_{\E}\to\Ab.$$

\vspace{.2in}

\addtocounter{theo}{1}
\paragraph{\textbf{Remark~\thetheo}} 
The coproduct structure on $\UU^>_{\E}$ may also be written
down explicitly in terms of $\Ab$, but it is rather less pleasant than (\ref{E:coprodshuffle}). We won't need
this.

\vspace{.2in}

\paragraph{\textbf{1.11.}} So far, we have only considered the spherical Hall algebra $\UU_{\E}$
and its vector bundle part $\UU^>_{\E}$ for a \textit{fixed}
curve $\E$ of genus $g$. However, 
these only depend  on the zeta function of
$\E$. Equivalently, we may view $\UU_{\E}$ and
$\UU^>_{\E}$ as the specializations at $(\a_1, \bar\a_1,
\ldots, \a_g, \bar\a_g)$ of a ``universal'' algebra which
depends on the genus $g$ and on a point in the torus
\begin{equation}\label{E:torusa}
{T}_a=\{(\eta_1, \bar\eta_1, \ldots, \eta_g,\bar\eta_g) \in (\CC^\times)^{2g}\;;\; \eta_i \bar\eta_i=
\eta_j\bar\eta_j,\;\forall i,j\}\simeq (\CC^\times)^{g+1}.
\end{equation}
Let $$R_a=R_{T_a},\quad {K}_a=K_{T_a}$$ be the complexified representation ring of $T_a$
and its fraction field. We define, using
the shuffle presentation of Section~1.6,  some ${K}_a$-algebras
$\UU^>_{K_a}=\Ab_{K_a}$, $\UU_{{K}_a}$, $\dots$
The bialgebra structure and Green's bilinear form both depend
polynomially on $(\a_1, \bar\a_1, \ldots, \a_g,
\bar\a_g)$ and may hence be defined over ${K}_a$.
Let $\Ab_{R_a}$ be the $R_a$-subalgebra of $\Ab_{K_a}$ generated by 
$R_a[x_1^{\pm 1}] \subset \Ab_{K_a}[1]$.
By construction $\Ab_{R_a}$ is a torsion-free integral form of $\Ab_{K_a}$, in the sense that $\Ab_{R_a} \otimes_{R_a} K_a = \Ab_{K_a}$. Moreover there exists a natural specialization map 
$$\Ab_{R_a} \to  \UU^>_{\E}, \quad 
x_1^d \mapsto \mathbf{1}^{\ss}_{1,d},\quad d\in\ZZ,$$ for a fixed curve $\E$
of genus $g$. We'll write $\UU^>_{R_a}=\Ab_{R_a}$ to emphasize this link with Hall algebras.

\vspace{.2in}

\paragraph{\textbf{1.12.}} In this paragraph we partially describe the 
image of the constant term map in rank $r$, i.e.,
we try to determine $\mathbf{A}_r$ inside the space of all symmetric rational functions. For this we consider the action of
${\UU}^0_{\E}$ on $\UU^>_{\E}$ by means of the {Hecke operators}. 
Indeed, the presentation of $\UU^>_{\E}$ as
a shuffle algebra is particularly well suited to understand this
action. Define algebra homomorphisms
\begin{equation}\label{pi} 
\pi_r~: \UU^0_{\E} \to
\CC[x_1^{\pm 1}, \ldots, x_r^{\pm 1}]^{\mathfrak{S}_r},\quad T_{0,d}
\mapsto c_d p_d(x_1, \ldots, x_r)=c_d \big(\sum_i x_i^d\big), \quad
d\geqslant 0,
\end{equation} 
where $c_d=v^d\#\E(\mathbb{F}_{q^d})[d]/{d}$.
From Lemma~\ref{L:Hall2} one deduces the following result.

\vspace{.1in}

\begin{lem} For any $r \geqslant 1$ the map $\Upsilon_{\E}$ intertwines the Hecke action of $\mathbf{U}^0_{\E}$ on $\UU^>_{\E}[r]$ with the natural action of $\CC[x_1^{\pm 1}, \ldots, x_r^{\pm 1}]^{\mathfrak{S}_r}$ on $\mathbf{A}_r$, i.e., we have
$$\Upsilon_{\E}(u_0 \bullet v)=\pi_r(u_0)\cdot \Upsilon_\E(v),
\quad u_0 \in \UU^0_{\E},\quad v \in \UU^>_{\E}[r].$$
\end{lem}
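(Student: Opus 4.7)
The plan is to reduce the claim to a computation on algebra generators on both sides. On the $\UU^0_\E$-side, it suffices to verify the identity for $u_0 = T_{0,d}$, since $\UU^0_\E$ is generated by these elements, the Hecke action is a left action of $\UU^0_\E$ on $\UU^>_\E[r]$, and $\pi_r$ is an algebra morphism. On the $\UU^>_\E[r]$-side, linearity together with the fact that $\UU^>_\E$ is generated by the $\mathbf{1}^{\ss}_{1,d}$ reduces the problem to monomials $v = \mathbf{1}^{\ss}_{1,d_1} \cdots \mathbf{1}^{\ss}_{1,d_r}$.

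For such a monomial, Proposition~\ref{prop:Hecket0d} gives $T_{0,d}\bullet v = [T_{0,d}, v]$. Expanding the commutator by the Leibniz rule (valid in any associative algebra) and applying Lemma~\ref{L:Hall2} to each factor, I get
$$[T_{0,d}, v] = c_d\sum_{i=1}^r \mathbf{1}^{\ss}_{1,d_1}\cdots \mathbf{1}^{\ss}_{1,d_i+d}\cdots \mathbf{1}^{\ss}_{1,d_r},\qquad c_d = \frac{v^d\,\#\E(\mathbb{F}_{q^d})[d]}{d}.$$
Applying the algebra morphism $\Upsilon_\E$, which sends $\mathbf{1}^{\ss}_{1,l}\mapsto x_1^l$, the left-hand side of the claimed identity becomes
$$\Upsilon_\E(T_{0,d}\bullet v) = c_d\sum_{i=1}^r x_1^{d_1}\star\cdots\star x_1^{d_i+d}\star\cdots\star x_1^{d_r}.$$

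To compute the right-hand side, I use the defining commutative diagram (\ref{E:shufflediagram}), which gives $\Psi_r(x_1^{d_1}\cdots x_r^{d_r}) = x_1^{d_1}\star\cdots\star x_1^{d_r}$ in $\mathbf{A}_r$. Since $\Psi_r(P) = Sym_r\bigl(g_\E(x_1,\ldots,x_r)\,P\bigr)$ and multiplication by a symmetric polynomial commutes with both $Sym_r$ and the scalar factor $g_\E(x_1,\ldots,x_r)$, I obtain
$$p_d\cdot\Psi_r(x_1^{d_1}\cdots x_r^{d_r}) = \Psi_r\bigl(p_d\cdot x_1^{d_1}\cdots x_r^{d_r}\bigr) = \sum_{i=1}^r\Psi_r\bigl(x_1^{d_1}\cdots x_i^{d_i+d}\cdots x_r^{d_r}\bigr).$$
Multiplying by $c_d$ and re-expressing each summand as an iterated shuffle product recovers precisely the expression obtained above for $\Upsilon_\E(T_{0,d}\bullet v)$, which proves the identity $\pi_r(T_{0,d})\cdot\Upsilon_\E(v) = \Upsilon_\E(T_{0,d}\bullet v)$ for monomial $v$.

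The extension to arbitrary $u_0\in\UU^0_\E$ is then formal: writing $u_0$ as a polynomial in the $T_{0,d}$ and using that $\bullet$ defines a left $\UU^0_\E$-module structure while $\pi_r(u_0)\cdot$ defines the corresponding action through the algebra map $\pi_r$, the identity propagates from generators to all of $\UU^0_\E$. There is no serious obstacle in this argument; the only point requiring care is the commutation of the symmetric polynomial $p_d$ past $\Psi_r$, which is immediate from the explicit formula defining $\Psi_r$, and the verification that the same constant $c_d$ appears both in Lemma~\ref{L:Hall2} and in the definition~(\ref{pi}) of $\pi_r$.
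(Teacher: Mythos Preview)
Your proof is correct and spells out precisely the argument the paper leaves implicit: the paper simply states that the lemma is deduced from Lemma~\ref{L:Hall2}, and your reduction to generators $T_{0,d}$ and monomials $\mathbf{1}^{\ss}_{1,d_1}\cdots\mathbf{1}^{\ss}_{1,d_r}$, followed by the Leibniz expansion of the commutator and the observation that symmetric polynomials commute through $\Psi_r$, is exactly the intended route.
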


\vspace{.1in}

\noindent In other words, the $\UU^0_{\E}$-module structure of $\UU^>_{\E}[r]$ is reflected in the structure of
$\Upsilon_{\E}(\UU^>_{\E}[r])=\mathbf{A}_r$ as a
$\CC[x_1^{\pm 1}, \ldots, x_r^{\pm r}]^{\mathfrak{S}_r}$-module. In particular,

\vspace{.1in}

\begin{cor}\label{C:torsionfree} 
The Hecke action of $\UU^0_{\E}$ on $\UU^>_{\E}$ is torsion free.
\end{cor}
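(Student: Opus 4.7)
My strategy would be to reduce directly to the preceding lemma, which translates the Hecke action on each graded piece into a multiplication in a ring of symmetric rational functions. Since the Hecke action preserves the grading by Proposition~\ref{prop:Hecket0d}, I would work inside a single fixed graded piece $\UU^>_\E[r]$.

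Given $v \in \UU^>_\E[r]$ with $v \neq 0$ and $u_0 \in \UU^0_\E$ satisfying $u_0 \bullet v = 0$, I would apply the algebra isomorphism $\Upsilon_\E$ of Corollary~\ref{C:kloop}. By the preceding lemma this yields the identity
$$\pi_r(u_0) \cdot \Upsilon_\E(v) = 0$$
inside $\mathbf{A}_r$. The key observation is then that $\mathbf{A}_r$ is, by its very construction in Section~1.10, a subspace of $\CC(x_1, \ldots, x_r)^{\mathfrak{S}_r}$, the field of symmetric rational functions in $r$ variables. Hence multiplication by any nonzero element of the subring $\CC[x_1^{\pm 1}, \ldots, x_r^{\pm 1}]^{\mathfrak{S}_r}$ is injective on $\mathbf{A}_r$, and since $\Upsilon_\E(v) \neq 0$, one must have $\pi_r(u_0) = 0$. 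In other words, the only elements of $\UU^0_\E$ that can annihilate a nonzero $v \in \UU^>_\E[r]$ are those that already act as zero on the entire graded piece, which is precisely the torsion-freeness statement.

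The main ``obstacle'', if it can be called one, is entirely absorbed into the preceding lemma: once that lemma identifies the Hecke action with multiplication in a field of symmetric rational functions, torsion-freeness drops out automatically, since any submodule of a field is torsion-free. The substantive work lies in that lemma and, behind it, in Lemma~\ref{L:Hall2}, so I do not anticipate any further complication in the proof of the corollary itself.
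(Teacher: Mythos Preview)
Your proposal is correct and matches the paper's approach exactly. The paper gives no explicit proof of this corollary beyond the phrase ``In particular'' following the preceding lemma; your write-up simply spells out the one-line argument that $\mathbf{A}_r$ sits inside the field $\CC(x_1,\ldots,x_r)^{\mathfrak{S}_r}$, so multiplication by a nonzero symmetric Laurent polynomial is injective.
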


\vspace{.1in}

\noindent The above definitions can be made for the Hall algebras $\UU^>_{K_a}$ over the field
$K_a$ as well, see Section 1.11.
The following proposition describes the structure of $\UU^>_{K_a}[r]$ as a 
$K_a[x_1^{\pm 1}, \ldots,x_r^{\pm 1}]^{\mathfrak{S}_r}$-module.
Set 
$$\Delta({x})=\prod_{i<j}(x_i-x_j).$$ 
It is easy to see that $\mathbf{A}_{K_a,r}$ is an ideal of $K_a[x_1^{\pm 1}, \ldots, x_r^{\pm 1}]^{\mathfrak{S}_r}$.

\begin{prop}\label{T:wheels} For $r \geqslant 1$ we have
$$\Delta({x})^{r!/2} I_r \subseteq \mathbf{A}_{K_a,r} \subseteq I_r.$$
Here $I_r \subset K_a[x_1^{\pm 1}, \ldots, x_r^{\pm 1}]^{\mathfrak{S}_r}$ is
the ideal sheaf of the reduced closed subvariety $Z_r \subset S^r(K_a^*)$ defined as
\begin{equation}\label{E:wheels}
Z_r=\bigcup_{\a} Z_r^{(\a)},\quad
Z_r^{(\a)}=\{ \mathfrak{S}_r (x_1, \ldots, x_r)\;;\; x_{1}=\a x_{2},\, x_{2}=\bar\a x_{3}\},
\end{equation}
where $\a$ runs into $\{\a_1, \bar\a_1, \ldots, \a_g, \bar\a_g\}$.
\end{prop}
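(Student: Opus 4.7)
The plan is to prove the two inclusions separately, both relying on the explicit shape
$$g_\E(z)=z^{g-1}\frac{1-qz}{1-z^{-1}}\prod_{i=1}^g (1-\a_iz^{-1})(1-\bar\a_iz^{-1}),$$
in particular on its simple pole at $z=1$ and on its zeros, which are the $2g+1$ values $\{\a_1,\bar\a_1,\ldots,\a_g,\bar\a_g,q^{-1}\}$ (the last coming from the numerator factor $1-qz$).

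For the inclusion $\mathbf{A}_{K_a,r}\subseteq I_r$, I would first verify that $\Psi_r(P)$ is in fact a Laurent polynomial: the only poles of $g(x_1,\dots,x_r)=\prod_{i<j}g_\E(x_i/x_j)$ lie along the diagonals $x_i=x_j$, and they cancel upon symmetrization thanks to the identity $\mathrm{Res}_{z=1}\bigl(g_\E(z)+g_\E(z^{-1})\bigr)=0$, which is a short residue computation. For the wheel vanishing, fix a Weil number $\a$ and specialize $(x_1,x_2,x_3)=(qt,\bar\a t,t)$ with $x_4,\dots,x_r$ free; this is a generic point of a component of $Z_r^{(\a)}$. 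For each $w\in\mathfrak{S}_r$, the three pairs $i<j$ with $\{w(i),w(j)\}\subseteq\{1,2,3\}$ produce three ordered ratios $x_{w(i)}/x_{w(j)}$ drawn from the six-element set $\{\a^{\pm1},\bar\a^{\pm1},q^{\pm1}\}$. A case check over the six orderings of $\{1,2,3\}$ inside $w$ shows that at least one of these three ratios always belongs to the zero set $\{\a,\bar\a,q^{-1}\}$ of $g_\E$, so every summand $w\cdot(gP)$ of the symmetrization vanishes at the specialized point; as $\a$ was arbitrary this yields $\Psi_r(P)\in I_r$.

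For the inclusion $\Delta(x)^{r!/2}I_r\subseteq\mathbf{A}_{K_a,r}$, the previous lemma identifies $\mathbf{A}_{K_a,r}$ as an ideal of $K_a[x_1^{\pm1},\dots,x_r^{\pm1}]^{\mathfrak{S}_r}$, so the task is to produce, for each $F\in I_r$, an explicit Laurent preimage of $\Delta(x)^{r!/2}F$ under $\Psi_r$. The starting observation is the formal identity $\mathrm{Sym}_r(g\cdot F/(r!\,g))=F$, which exhibits the rational function $Q=F/(r!\,g)$ as a preimage over the generic locus: the wheel vanishing of $F$ kills the would-be poles of $Q$ along the zero hyperplanes of $g$, while $Q$ acquires zeros along the diagonals where $g$ blows up. To convert this into an honest Laurent preimage for $\Delta(x)^{r!/2}F$, one multiplies through by $\Delta(x)^{r!/2}$ and tracks pole orders term-by-term across the $r!$ summands of the symmetrization, invoking pairwise cancellations between permutations differing by a transposition to halve the apparent contribution along each diagonal; this yields the exponent $r!/2$.

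The main obstacle is this lower bound. The upper bound reduces to a clean finite case check driven by the three explicit zeros of $g_\E$, but the exponent $r!/2$ demands a careful accounting of pole orders across the $r!$ summands of the symmetrization together with the transposition-pair cancellations, which is the technical heart of the argument.
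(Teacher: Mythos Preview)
Your upper bound is fine and matches the paper: after clearing the diagonal poles, the paper rewrites $\Psi_r$ as $\frac{1}{\Delta(x)}\sum_\sigma(-1)^{l(\sigma)}X_\sigma(x)\,\sigma P$ with $X_\sigma(x)=\sigma\bigl(\prod_{i<j,\,\gamma\in\Gamma}(x_i-\gamma x_j)\bigr)$, $\Gamma=\{\a_1,\bar\a_1,\ldots,\a_g,\bar\a_g,q^{-1}\}$, and observes exactly your case-check that for any ordering of $\{1,2,3\}$ at least one of the three relevant factors vanishes on the wheel locus.

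Your lower bound, however, has a genuine gap. The sentence ``the wheel vanishing of $F$ kills the would-be poles of $Q=F/(r!\,g)$ along the zero hyperplanes of $g$'' is false: $I_r$ imposes vanishing only on the \emph{codimension-two} locus where two specific hyperplanes intersect, while $g$ has zeros along each individual \emph{codimension-one} hyperplane $x_i=\gamma x_j$. A generic $F\in I_r$ does not vanish on any single such hyperplane, so $Q$ has honest simple poles there and is not a Laurent polynomial; your subsequent pole-order bookkeeping along the diagonals does nothing to address this. No amount of multiplying by powers of $\Delta(x)$ (a product of $x_i-x_j$ factors) removes poles located at $x_i=\gamma x_j$ with $\gamma\neq 1$.

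The paper's route to the lower bound is quite different and is where the real content lies. One organizes the $r!$ generators $\Psi'_r(x_1^{n_1}\cdots x_r^{n_r})$, $0\le n_i\le r-i$, into a column vector $\Psi=B\cdot X$, where $X$ is the vector of $X_\sigma$'s and $B$ is the $r!\times r!$ matrix $b_{\sigma,I}=(-1)^{l(\sigma)}\sigma(x^I)/\Delta(x)$. A direct computation gives $\det B=\pm\Delta(x)^{-r!/2}$, so inverting shows each $X_\sigma$ lies in $\Delta(x)^{1-r!/2}\tilde{\mathbf A}_{K_a,r}$. The remaining (and substantial) step is to prove that the ideal $\tilde J_r$ generated by the $X_\sigma$'s coincides with the pullback $\tilde I_r$ of $I_r$: this requires both a support computation (a small graph-theoretic argument showing that any simultaneous zero of all $X_\sigma$ forces a cycle among the ratios $z_i/z_j\in\Gamma$, hence a wheel) and a verification that $\tilde J_r$ is radical (done by localizing at generic points of each component). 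None of this is visible in your sketch; the determinant of $B$ is what produces the exponent $r!/2$, not a transposition-pairing heuristic.
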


\vspace{.1in}





\begin{proof} Recall that $\mathbf{A}_{K_a,r}$ is
the image of the operator $\Psi_r$ defined by
\begin{equation*}
\begin{split}
\Psi_r(P(x_1, \ldots, x_r))=&\\
=\sum_{\sigma \in \mathfrak{S}_r} \sigma  \bigg( \prod_{i<j} &
\bigg( \frac{x_i}{x_j}\bigg)^{g-1} \frac{(1-q^{-1}x_j/x_i)\prod_{l=1}^g (1-\a_l x_j/x_i)(1-\bar\a_lx_j/x_i)}{(1-x_j/x_i)}
P(x_1, \ldots, x_r) \bigg). 
\end{split}
\end{equation*}
We have
\begin{equation*}\label{E:whe1}
\Psi_r(P(x_1, \ldots, x_r))=\frac{(x_1 \ldots x_r)^{2(1-g)}(-q^{r(r-1)/2})}{\Delta({x})}
\sum_{\sigma} (-1)^{l(\sigma)} \sigma \big( \prod_{\substack{i<j \\ \gamma \in 
\Gamma}}  (x_i-\gamma x_j)
P(x_1, \ldots, x_r) \big),
\end{equation*}
where $\Gamma=\{ \a_1, \bar\a_1, \ldots, \a_g, \bar\a_g, q^{-1}\}$. Thus
$\mathbf{A}_{K_a,r}=Im\;\Psi'_r$ with $\Psi'_r$ defined by
$$\Psi'_r(P(x_1, \ldots, x_r))=\frac{1}{\Delta({x})}\sum_{\sigma} (-1)^{l(\sigma)} \sigma
\big( \prod_{\substack{i<j \\ \gamma \in \Gamma}}  (x_i-\gamma x_j) P(x_1, \ldots, x_r) \big).$$
Now let $\a \in \{\a_1, \bar\a_1, \ldots, \a_g, \bar\a_g\}$. For any $\sigma \in \mathfrak{S}_r$ we have either
$\sigma^{-1}(1)<\sigma^{-1}(2),$ or $\sigma^{-1}(2) < \sigma^{-1}(3),$ or $\sigma^{-1}(3) < \sigma^{-1}(1)$. 
This means that
for any $\sigma$ the element
$$X_{\sigma}({x})=\sigma \bigl(\prod_{i<j, \gamma \in \Gamma}(x_i-\gamma x_j)\bigr)$$ is divisible
by $x_{1}-\a x_{2}$, by $x_{2}-\bar\a x_{3}$ or by $x_{3}-q^{-1}x_{1}$. Of course, the same holds if
we replace $(1,2,3)$ by any other triplet of elements in $\{1, \ldots, r\}$. In all cases, $X_{\sigma}({x})$
belongs to the ideal sheaf of $Z_r^{(\a)}$. It follows that $$\Psi'_r (P(x_1, \ldots, x_r)) =
\Delta({x})^{-1} \sum_{\sigma} (-1)^{l(\sigma)}X_\sigma({x}) \sigma
P(x_1, \ldots, x_r)$$ belongs to that ideal sheaf as well, for any polynomial $P(x_1, \ldots, x_r)$. 
We have proved that $supp\; \mathbf{A}_{K_a,r} \supset Z_r$, i.e., that $\mathbf{A}_{K_a,r} \subset I_r$.

We now prove the reverse inclusion. Let
$\pi: (K_a^*)^r \to S^r K_a$ be the standard projection. It will be convenient to lift everything
to $(K_a^*)^r$ via $\pi$. Put $\tilde{\mathbf{A}}_{K_a,r}=\mathbf{A}_{K_a,r} \otimes K_a[x_1^{\pm 1}, \ldots, x_r^{\pm 1}]$,
where the tensor product is taken over ${K_a[x_1^{\pm 1}, \ldots, x_r^{\pm 1}]^{\mathfrak{S}_r}}$.
This is an ideal of $K_a[x_1^{\pm 1}, \ldots, \x_r^{\pm 1}]$, and we have
$$\mathbf{A}_{K_a,r}=\tilde{\mathbf{A}}_{K_a,r} \cap K_a[x_1^{\pm 1}, \ldots, x_r^{\pm 1}]^{\mathfrak{S}_r}.$$
The space $K_a[x_1^{\pm 1}, \ldots, x_r^{\pm 1}]$, viewed as a
$K_a[x_1^{\pm 1}, \ldots, x_r^{\pm 1}]^{\mathfrak{S}_r}$-module, is freely generated by the $r!$ monomials
$\{x_1^{n_1} \cdots x_r^{n_r}\;;\; 0 \leqslant n_i \leqslant r-i\},$ see e.g., \cite[Lemma~7.6.1]{Lascoux}. We deduce that
$\mathbf{A}_{K_a,r}$ is generated over $K_a[x_1^{\pm 1}, \ldots, x_r^{\pm 1}]^{\mathfrak{S}_r}$ by the $r!$
elements in
$$\{\Psi'(x_1^{n_1} \cdots x_r^{n_r})\;;\; 0 \leqslant n_i \leqslant r-i\},$$ 
and thus that $\tilde{\mathbf{A}}_{K_a,r}$ is
the ideal of $K_a[x_1^{\pm 1}, \ldots, x_r^{\pm 1}]$ generated by the set
$$\{\Psi'(x_1^{n_1} \cdots x_r^{n_r})\;;\; 0 \leqslant n_i \leqslant r-i\}.$$
Consider the $r! \times r!$ square matrix  $B=(b_{\sigma, I})_{\sigma,I}$ where $\sigma \in \mathfrak{S}_r$, 
$I\in \{(n_1, \ldots, n_r)\;;\; 0 \leqslant n_i \leqslant r-i\},$ with entries
$$b_{\sigma,I}=\frac{1}{\Delta({x})} (-1)^{l(\sigma)}\sigma(x_1^{n_1} \cdots x_r^{n_r}).$$
Let $\Psi$ be the column vector with entries $\Psi'_r(x_1^{n_1} \cdots x_r^{n_r})$ for $(n_1, \ldots, n_r) \in I$
and likewise let $X$ be the column vector with entries $X_{\sigma}({x})$ for $\sigma \in \mathfrak{S}_r$,
so that we have $\Psi=B \cdot X$.

\vspace{.1in}

\begin{lem} We have $det(B)= \pm \Delta({x})^{-r!/2}$.
\end{lem}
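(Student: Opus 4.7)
The plan is to pull out the scalar factor $(-1)^{l(\sigma)}/\Delta({x})$ from row $\sigma$ of $B$, reducing the proposition to showing $\det M' = \pm\,\Delta({x})^{r!/2}$, where $M'$ is the $r! \times r!$ matrix with entries $M'_{\sigma,I} = \sigma(x_1^{n_1}\cdots x_r^{n_r})$. Indeed, $B = \Delta({x})^{-1}\,\mathrm{diag}((-1)^{l(\sigma)})\cdot M'$, so $\det B = \pm\,\Delta({x})^{-r!}\det M'$, and the desired identity $\det B = \pm\,\Delta({x})^{-r!/2}$ follows at once from the claim on $\det M'$.

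First I would check that $\det M'$ and $\Delta({x})^{r!/2}$ are homogeneous of the same total degree. Each entry $\sigma(x^I)$ is a monomial of degree $|I| = n_1+\cdots+n_r$ depending only on the column index $I$, so every term in the Leibniz expansion of $\det M'$ has degree $\sum_I |I|$. A direct computation over the staircase $\{0\leqslant n_i \leqslant r-i\}$ yields $\sum_I |I| = \frac{r!}{2}\binom{r}{2}$, which also equals $\deg \Delta({x})^{r!/2}$.

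Next I would prove the divisibility $\Delta({x})^{r!/2}\mid \det M'$. For each transposition $\tau = (i\,j)$, the monomial $(\tau\sigma)(x^I)$ is obtained from $\sigma(x^I)$ by swapping the exponents of $x_i$ and $x_j$, so rows $\sigma$ and $\tau\sigma$ of $M'$ coincide after the substitution $x_i = x_j$. Grouping the $r!$ rows into $r!/2$ pairs $\{\mathrm{row}_\sigma, \mathrm{row}_{\tau\sigma}\}$ indexed by a transversal of $\mathfrak{S}_r/\langle\tau\rangle$ and replacing each pair by its sum and difference alters $\det M'$ only by a nonzero scalar, while producing $r!/2$ ``difference'' rows each divisible by $x_i - x_j$. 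Hence $(x_i-x_j)^{r!/2}\mid \det M'$; iterating over all $i<j$ gives $\Delta({x})^{r!/2}\mid \det M'$. Combined with the degree match, this forces $\det M' = c\cdot \Delta({x})^{r!/2}$ for some scalar $c \in K_a$.

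Finally, I would identify $c$ as $\pm 1$ by induction on $r$. The base case $r=2$ is a direct $2\times 2$ computation giving $c=-1$. For the induction step I would extract the coefficient of the leading monomial $\prod_i x_i^{(r-i)\,r!/2}$: the coefficient on the right is $1$, while on the left the maximal $x_1$-power $\tfrac{r!(r-1)}{2}$ is attained by forcing each row $\sigma$ to contribute the factor $x_1^{r-\sigma^{-1}(1)}$, and the residual coefficient, after stripping off a Vandermonde in $x_2,\ldots,x_r$ raised to the power $r!/2$, reduces to the analogous identity for $\mathfrak{S}_{r-1}$. The main obstacle is this last step: while the divisibility and degree arguments are essentially formal, pinning down $c$ as exactly $\pm 1$ rather than some nontrivial scalar in $K_a$ requires careful combinatorial bookkeeping to exhibit a unique bijection of rows to columns (up to sign) attaining the leading term and to verify that its contribution is indeed $\pm 1$.
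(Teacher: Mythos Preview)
The paper gives no proof of this lemma (it is explicitly ``left to the reader''), so there is nothing to compare your argument against; your task was simply to supply a valid proof, and your outline is essentially sound.

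Your reduction to $\det M' = \pm\,\Delta(x)^{r!/2}$ via the divisibility and degree count is correct and is the natural approach. Since the entries of $M'$ lie in $\mathbb{Z}[x_1,\ldots,x_r]$ (no Weil numbers appear), you in fact get $c\in\mathbb{Z}$, not merely $c\in K_a$.

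The one point that needs more care is your induction step. As you extract the maximal $x_1$-power, the bijections $\pi$ achieving it are exactly those for which $\pi$ restricts to a bijection from $S_k=\{\sigma:\sigma(k)=1\}$ onto $U_k=\{I:n_k=r-k,\ n_j\le r-j-1\text{ for }j<k\}$ for every $k=1,\ldots,r$ (these $U_k$ partition the column set, each of size $(r-1)!$, and the inclusion is forced inductively on $k$ by counting). After reordering rows and columns into these blocks, the leading $x_1$-coefficient of $\det M'$ is, up to a global sign, the \emph{product} of $r$ block determinants $\det(M''_{\sigma,I})_{\sigma\in S_k,\,I\in U_k}$. Each block, after deleting position $k$, is exactly the $\mathfrak{S}_{r-1}$ matrix in variables $x_2,\ldots,x_r$, hence equals $\pm\,\Delta(x_2,\ldots,x_r)^{(r-1)!/2}$ by induction. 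The product of the $r$ blocks is therefore $\pm\,\Delta(x_2,\ldots,x_r)^{r\cdot(r-1)!/2}=\pm\,\Delta(x_2,\ldots,x_r)^{r!/2}$, matching the leading $x_1$-coefficient of $\Delta(x)^{r!/2}$.

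So your sketch ``reduces to the analogous identity for $\mathfrak{S}_{r-1}$'' is slightly misleading: it reduces to $r$ copies of that identity, one per block, and it is their product that yields the correct power $r!/2$. Once this is made precise, the argument is complete.
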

\begin{proof} Left to the reader. The sign depends on the particular choice of ordering for the rows or columns of $B$.
\end{proof}

\vspace{.1in}

\noindent
Thus, the matrix $B$ has an inverse $B^{-1}$ whose entries belong to  $\Delta({x})^{1-r!/2}K_a[x_1^{\pm 1}, \ldots, x_r^{\pm 1}]$.
As a consequence, $X_{\sigma}({x})$ belongs to 
$$\Delta({x})^{1-r!/2}\sum_{n_1, \ldots, n_r} K_a[x_1^{\pm 1}, \ldots,
x_r^{\pm 1}] \Psi'_r(x_1^{n_1} \cdots x_r^{n_r})=\Delta({x})^{1-r!/2}\tilde{\mathbf{A}}_{K_a,r}.$$
Let $\tilde{J}_r$ be the ideal of $ K_a[x_1^{\pm 1}, \ldots, x_r^{\pm 1}]$
generated by the $\{X_{\sigma}({x})\}_{\sigma}$. Let also
$$\tilde{I}_r=I_r \otimes K_a[x_1^{\pm 1}, \ldots, x_r^{\pm 1}]$$ be the (radical) ideal sheaf of 
$\pi^{-1}(Z_r)$. We will show that $\tilde{J}_r=\tilde{I}_r$ by checking that $supp\;\tilde{J}_r=\pi^{-1}(Z_r)$ and that $\tilde{J}_r$ is radical. 
Indeed, we have $$supp\;\tilde{J}_r =
\bigcap_{\sigma} supp\; X_{\sigma}({x}).$$
This last intersection is precisely given by the equations (\ref{E:wheels}) as we now show. Let
$(z_1, \ldots, z_r) \in (K_a^*)^r$ satisfy $X_{\sigma}(z_1, \ldots, z_r)=0$ for all $\sigma$. This means that for
every $\sigma$ there exists a pair $(i,j)$ and $\gamma \in \Gamma$ such that $\sigma(i)<\sigma(j)$ and
$z_i=\gamma z_j$. Let us draw an arrow $i \to j$ if $z_i/z_j \in \Gamma$. If the graph on $\{1, \ldots, r\}$
contains no oriented cycles then we can find a permutation $\sigma$ satisfying $i \to j \Rightarrow \sigma(i) > \sigma(j)$,
and this $\sigma$ violates the above condition. Hence there exists a cycle $i_1 \to i_2 \to \cdots \to i_l \to i_1$. Note
that we have $\frac{z_{i_1}}{z_{i_2}} \cdots \frac{z_{i_l}}{z_{i_1}}=1$. A sequence of elements of $\Gamma$ which
multiplies to one necessarily contains a subsequence $(\a,q^{-1})$. But this means that there are three indices $i,j,k$
such that $z_i=\bar\a z_j, z_j =\alpha z_k$ (recall that $\a \bar\a=q$). In other words, we have
$$(z_1, \ldots, z_r) \in
\pi^{-1}(Z_r),\quad\bigcap_{\sigma} supp\;X_{\sigma}({x})=\pi^{-1}(Z_r),$$ as wanted. 
We now check that $\tilde{J}_r$ is a radical ideal. Set $S_r=K_a[x_1^{\pm 1}, \ldots, x_r^{\pm 1}]/\tilde{J}_r$. We have to prove that no element of $S_r$ is
nilpotent. For this, it is enough to check that for each irreducible component $C$ of $\pi^{-1}(Z_r)$ there exists a point
$z \in C$ for which the localization $S_{r,z}$ of $S_r$ at $z$ has no nilpotent. We have
$$\pi^{-1}(Z_r)=\bigcup_{(\underline{i}, \a)} \tilde{Z}_r^{(\underline{i},\a)}, \qquad \tilde{Z}_r^{(\underline{i},\a)}=
\{(x_1, \ldots, x_r); x_{i_1}=\a x_{i_2}, x_{i_2}=\bar\a x_{i_3}\},$$
where $\underline{i}=(i_1,i_2,i_3)$ runs among all triplets of elements of $\{1, \ldots, r\}$ and 
$\a \in \{\a_1, \bar\a_1,
\ldots, \a_g \bar\a_g\}$. For a given $(\underline{i}, \a)$ 
as above let us pick a generic point $z=(z_1, \ldots, z_r)$ of
$\tilde{Z}_r^{(\underline{i},\a)}$. Hence $z$ satisfies $$z_{i_1}=\a z_{i_2},\quad z_{i_2}=
\bar\a z_{i_3},\quad z_{i_3}=q^{-1}z_{i_1}$$
but $z_i/z_j \not\in \Gamma$ for any other value of $(i,j)$. 
The functions $(x_i-\gamma x_j)$ are therefore invertible in $S_{r,z}$
except for the three values $(i,j,\gamma)$ above. 
We may choose a permutation $\sigma \in \mathfrak{S}_r$ for which
$\sigma^{-1}(i_3)< \sigma^{-1}(i_1) <\sigma^{-1}(i_2)$. Then we have
$$X_{\sigma}({x})=(x_{i_1}-\a x_{i_2}) Y({x})$$
for some polynomial $Y({x})$ which is invertible in $S_{r,z}$. But this means that $x_{i_1}-\a x_{i_2}=0$ in $S_{r,z}$.
A similar argument shows that $x_{i_2}-\bar\a x_{i_3}=x_{i_3}-q^{-1}x_{i_1}=0$ in $S_{r,z}$ as well. We deduce that
$$S_{r,z}=K_a[x_1^{\pm 1}, \ldots, x_r^{\pm 1}]_{z}/\langle x_{i_1}-\a x_{i_2}, x_{i_2}-\bar\a x_{i_3}\rangle.$$ In particular
$S_{r,z}$ has no nilpotent element. We have proved that $\tilde{J}_r$ is radical. 
We have shown that $$\tilde{\mathbf{A}}_{K_a,r} \supset \Delta({x})^{r!/2-1} \tilde{I}_r.$$ Taking the intersection with
$K_a[x_1^{\pm 1}, \ldots, x_r^{\pm 1}]^{\mathfrak{S}_r}$ yields the desired inclusion.
This finishes the proof of Proposition~\ref{T:wheels}.
\end{proof}

\vspace{.2in}

\addtocounter{theo}{1}

\noindent
\textbf{Remark \thetheo.} 
$(a)$ Equations (\ref{E:wheels}) are dubbed \textit{wheel conditions} in \cite{Feigin}, in the general context
of Feigin-Odesskii algebras. A weaker form of Proposition~\ref{T:wheels} 
appears in \cite{FT} for $g=1$. Note that $Z_r$ is the image under the 
natural map $(K_a^*)^r \to S^r K_a^*$ of a union of
$2g$ codimension two subspaces. Observe also that $Z_r$ is empty for $r=2$ but not for any $r>2$. As a consequence
$\mathbf{A}_{K_a,r}$ is \textit{not} free over $K_a[x_1^{\pm 1}, \ldots, x_r^{\pm 1}]^{\mathfrak{S}_r}$ as soon as $r>2$ and $g >0$.

\vspace{1mm}

$(b)$ The above proposition holds for the Hall algebras $\UU^>_{K_a}$ only, 
or alternatively when the curve $\E$ is \textit{generic}.
For special curves the Frobenius eigenvalues might not all be distinct (for instance, some of them could be
 real); this corresponds
to the merging of two irreducible components $Z_r^{(\a)}, Z_r^{(\a')}$.

\vspace{.2in}

\paragraph{\textbf{1.13.}} In this last section, we define a twisted version $\dot{\UU}^>_{\E}$ of 
the spherical Hall algebra $\UU^>_{\E}$ of $\E$. This twisted version is more relevant
to the geometric Langlands program.
The operation of tensoring with a line bundle is an exact autoequivalence
of the category $Coh(\E)$ and it induces an automorphism of the Hall algebra
$\H_{\E}$. Since 
$$\mathcal{L} \otimes \mathbf{1}^{\ss}_{1,d}=
\mathbf{1}^{\ss}_{d+deg(\mathcal{L})},\quad d\in\ZZ,$$
tensoring with a line bundle preserves the subalgebra $\UU^>_{\E}$. 
Let $\Omega \in Pic^{2(g-1)}(\E)$ be the canonical bundle of $\E$. 
We define a twisted multiplication in $\UU^>_{\E}$ by the rule
\begin{equation}\label{E:twistmult}
u_r \circ w_s=(u_r \otimes \Omega^{-s/2}) \cdot (w_s \otimes \Omega^{r/2}),
\quad u_r\in \UU^>_{\E}[r], \quad w_s \in \UU^>_{\E}[s]. 
\end{equation}
The new multiplication $\circ$ is associative. We denote by $\dot{\UU}^>_{\E}$ the ensuing
algebra. 
Let us rewrite the new multiplication in terms of the shuffle algebra $\Ab=\mathbf{A}_{g_{\E}(z)}$. We have
$$J_r(u_r \otimes \Omega^{n/2})=(x_1 \cdots x_r)^{(g-1)n} J_r(u_r),\quad
n \in \Z,\quad u_r \in \UU^>_{\E}[r].$$
Hence the twisted multiplication in $\mathbf{A}_{g_{\E}(z)}$ is given by
\begin{equation}
\begin{split}
&P(x_1, \ldots, x_r) \circ Q(x_1, \ldots, x_s)=\big( (x_1 \cdots x_r)^{(1-g)s} P(x_1, \ldots, x_r)\big) \star
\big( (x_1 \cdots x_s)^{(g-1)r} Q(x_1, \ldots, x_s)\big)\\
&=\sum_{\sigma \in Sh_{r,s}} \sigma\bigg\{ \prod_{\substack{1 \leqslant i 
\leqslant r\\ r+1 \leqslant j \leqslant s}} 
\hspace{-.1in} g_{\E}(x_i/x_j) (x_1 \cdots x_r)^{(1-g)s}(x_{r+1} \cdots x_{r+s})^{(g-1)r} 
P(x_1, \ldots, x_r) Q(x_{r+1}, \ldots, x_{r+s}) \bigg\}\\
&=\sum_{\sigma \in Sh_{r,s}}\sigma \bigg\{ \prod_{\substack{1 \leqslant i 
\leqslant r\\ r+1 \leqslant j \leqslant s}} \hspace{-.1in} 
g'_{\E}(x_i/x_j) P(x_1, \ldots, x_r) Q(x_{r+1}, \ldots, x_{r+s}) \bigg\}
\end{split}
\end{equation}
with $g'_{\E}(z)=z^{(1-g)}g(z)=\tilde{\zeta}_{\E}(z^{-1})$. In other terms, we have the following.

\vspace{.1in}

\begin{cor}\label{C:finalshuffle} 
The assignement $\mathbf{1}^{\ss}_{1,d} \mapsto x_1^d$, $d\in\ZZ$, extends to
an algebra isomorphism 
$$\dot{\Upsilon}_{\E}: \dot{\UU}^>_{\E} {\to} \mathbf{A}_{\tilde{\zeta}_{\E}(z^{-1})}$$
such that
$\dot\Upsilon_{\E}(u_0 \bullet v)=\pi_r(u_0)\cdot\dot \Upsilon_\E(v)$ for
$u_0 \in \UU^0_{\E}$ and $v \in\dot \UU^>_{\E}[r].$
\end{cor}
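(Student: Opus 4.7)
The plan is to transport the isomorphism $\Upsilon_{\E}: \UU^>_{\E} \to \mathbf{A}_{g_{\E}(z)}$ of Corollary~\ref{C:kloop} through the twist of multiplication defined in (\ref{E:twistmult}), and then identify the resulting shuffle kernel.

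First I would record how tensoring with a line bundle interacts with the constant term map. For any line bundle $\mathcal{L}$ of degree $m$, tensoring sends $\mathbf{1}^{\ss}_{1,d}$ to $\mathbf{1}^{\ss}_{1,d+m}$ and commutes factor-by-factor with $\widetilde{\Delta}_{(1^r)}$ and $\omega$; hence $\Upsilon_{\E}(u_r\otimes\mathcal{L})=(x_1\cdots x_r)^{m}\Upsilon_{\E}(u_r)$ for $u_r\in\UU^>_{\E}[r]$. Applied formally to the symbol $\Omega^{n/2}$ (which enters only through its degree $n(g-1)$), this gives $\Upsilon_{\E}(u_r\otimes\Omega^{n/2})=(x_1\cdots x_r)^{n(g-1)}\Upsilon_{\E}(u_r)$.

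Next I substitute into (\ref{E:twistmult}) and use that $\Upsilon_{\E}$ is an algebra morphism for the ordinary product $\cdot$. Setting $P=\Upsilon_{\E}(u_r)$ and $Q=\Upsilon_{\E}(w_s)$ and expanding via (\ref{E:shuffle2}) produces
\begin{equation*}
\Upsilon_{\E}(u_r\circ w_s)=\sum_{\sigma\in Sh_{r,s}}\sigma\bigg((x_1\cdots x_r)^{(1-g)s}(x_{r+1}\cdots x_{r+s})^{(g-1)r}\prod_{\substack{1\leqslant i\leqslant r\\ r+1\leqslant j\leqslant r+s}}g_{\E}(x_i/x_j)\,PQ\bigg).
\end{equation*}
The combinatorial crux is the redistribution of the monomial prefactor across the $rs$ pairs $(i,j)$: a direct count gives $(x_1\cdots x_r)^{(1-g)s}(x_{r+1}\cdots x_{r+s})^{(g-1)r}=\prod_{(i,j)}(x_i/x_j)^{1-g}$. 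Combined with $g_{\E}(z)=z^{g-1}\tilde{\zeta}_{\E}(z^{-1})$ from (\ref{E:defgx(z)}), this transforms the integrand into $\prod_{(i,j)}\tilde{\zeta}_{\E}(x_j/x_i)\cdot PQ$, which is precisely the shuffle product formula (\ref{E:shuffle2}) with kernel $g'_{\E}(z):=\tilde{\zeta}_{\E}(z^{-1})$.

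This identifies $\dot{\UU}^>_{\E}$ with $\mathbf{A}_{\tilde{\zeta}_{\E}(z^{-1})}$ as algebras, giving $\dot{\Upsilon}_{\E}$. The Hecke compatibility is then automatic: the Hecke action $\bullet$ does not involve the twisted product at all and the underlying linear map is unchanged, so the intertwining relation $\dot{\Upsilon}_{\E}(u_0\bullet v)=\pi_r(u_0)\cdot\dot{\Upsilon}_{\E}(v)$ follows verbatim from the untwisted version established in the lemma of Section~1.12. The only non-algebraic point is the initial observation that line-bundle twists correspond to diagonal scaling by $(x_1\cdots x_r)^m$ on the shuffle side; this is the main (minor) obstacle, and it follows directly from the fact that the constant term map is computed in $\UU^>_{\E}[1]^{\widehat{\otimes} r}$ where tensoring with $\mathcal{L}$ acts componentwise by degree shift.
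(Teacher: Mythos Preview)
Your proof is correct and follows essentially the same route as the paper: the computation preceding the Corollary in Section~1.13 does exactly what you describe—transport $\Upsilon_{\E}$ through the twist (\ref{E:twistmult}), use $J_r(u_r\otimes\Omega^{n/2})=(x_1\cdots x_r)^{(g-1)n}J_r(u_r)$, and absorb the resulting monomial into the shuffle kernel to obtain $g'_{\E}(z)=z^{1-g}g_{\E}(z)=\tilde{\zeta}_{\E}(z^{-1})$. Your explicit remark that the Hecke compatibility is inherited verbatim from the untwisted Lemma of Section~1.12 (since neither the underlying linear map nor the Hecke action $\bullet$ involves the twisted product) is a welcome clarification that the paper leaves implicit.
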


\vspace{.1in}

\noindent We also have identifications of integral and rational forms 
$$\dot{\UU}^{>}_{R_a} \simeq \mathbf{A}_{R_a,\tilde{\zeta}(z^{-1})},
\quad\dot{\UU}^>_{K_a} \simeq\mathbf{A}_{K_a,\tilde{\zeta}(z^{-1})}. $$
 
\vspace{.2in}
 
\addtocounter{theo}{1}

\paragraph{\textbf{Remark \thetheo}} 
$(a)$ The twist (\ref{E:twistmult}) is classical, see e.g., 
\cite{Gaitsgory}. It may be written in a slightly more canonical fashion 
as follows. Identify, for $r \geqslant 1$, the algebra
$\CC[x_1^{\pm 1}, \ldots, x_r^{\pm 1}]^{\mathfrak{S}_r}$ with $\CC[T]^{W}$, where $T \subset G=GL_r$ is a maximal torus and $W=\mathfrak{S}_r$ is the Weyl group of $(G,T)$. This way we view the multiplication in $\mathbf{A}_{g(z)}$ as a collection of (induction) maps 
$$m_{L,G}~: \CC[T_L]^{W_L} \to \CC[T_G]^{W_G},$$ where $L$ is the Levi factor of some parabolic subgroup $P \subset G$ and $G$ runs among all $GL_r$. Then the twisted multiplication can be written as 
$$\dot{m}_{L,G}(f)=m_{L,G}(e^{2(1-g)(\rho_G-\rho_L)}f),$$ where  $\rho_H$ is half the sum of positive roots  of a reductive group $H$.

\vspace{1mm}

$(b)$ The main reason for considering $\dot{\UU}^>_{\E}$ rather than $\UU^>_{\E}$ is that the functional
equation for Eisenstein series takes in $\dot{\UU}^>_{\E}$ 
the following particularly nice form. 
Indeed, the series
$$\dot{E}(z_1, \ldots, z_r)=E_1(z_1) \circ \cdots \cdot \circ E_1(z_r),\quad
\quad E_1(z)=\sum_{d \in \Z} \mathbf{1}_{1,d}z^d,$$ 
viewed as a rational function in $z_1, \ldots, z_r$, is \textit{symmetric}.

\vspace{.2in}

\section{K-theoretic Hall algebras}

\vspace{.2in}

\paragraph{\textbf{2.1.}}
Let $G$ be a complex linear algebraic group. By a variety we'll
always mean a quasi-projective complex variety. We call $G$-variety
a variety $X$ with a rational action of $G$. Let $K^G(X)$ be the
complexified Grothendieck group of the Abelian category of the
$G$-equivariant coherent sheaves over $X$.
Let $P\subset G$ a parabolic subgroup and $H\subset P$ a Levi
subgroup. Fix a $H$-variety $Y$.  The group $P$ acts on $Y$ through
the obvious group homomorphism $P\to H$.
The induced $G$-variety is
$$X=G\times_PY.$$
Now assume that $Y$ is smooth. Given a smooth subscheme $O\subset Y$
let $T^*_OY\subset T^*Y$ be the conormal bundle to $O$. It is
well-known that the induced $H$-action on $T^*Y$ is Hamiltonian and
that the zero set of the moment map is the closed $H$-subvariety
$$T^*_HY=\bigcup_{O}T^*_OY\subset T^*Y,$$
where $O$ runs over the set of $H$-orbits. The following lemma is
left to the reader.

\begin{lem}
We have $T^*X=T^*_P(G\times Y)/P$ and $T^*_GX=G\times_PT^*_HY$. The
induction yields a canonical isomorphism $K^H(T^*_HY)=K^G(T^*_GX)$.
\end{lem}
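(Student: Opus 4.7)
The strategy is to reduce all three assertions to the fact that $\pi\colon G\times Y\to X=G\times_PY$ is a principal $P$-bundle, with $G$ acting by left translation on the first factor and $P$ acting on $Y$ through $P\to H$.

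For the first identity, I would argue that since $\pi$ is a principal $P$-bundle, the pullback $\pi^*T^*X$ is the subbundle of $T^*(G\times Y)$ consisting of covectors that annihilate the tangent directions of the $P$-orbits. By the very definition of the moment map, this horizontal locus is exactly the zero fiber of the $P$-moment map, i.e.\ $T^*_P(G\times Y)$. Since $\pi$ is $P$-invariant, passing to $P$-quotients gives $T^*X = T^*_P(G\times Y)/P$.

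For the second identity, first observe that because $P$ acts on $Y$ through $P\to H$, the unipotent radical of $P$ acts trivially on $Y$ and on $T^*Y$; in particular the $P$- and $H$-orbits on $Y$ coincide, as do the loci $T^*_PY$ and $T^*_HY$. Consequently the $G$-orbits on $X$ are exactly the subvarieties $\widetilde O = G\times_P O\subset X$, one for each $H$-orbit $O\subset Y$, and their preimages under $\pi$ are the subvarieties $G\times O\subset G\times Y$. Using $T^*(G\times Y)=T^*G\times T^*Y$ together with the fact that the conormal bundle to all of $G$ in $G$ is the zero section, the conormal to $G\times O$ is $G\times T^*_OY$. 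On this locus the $T^*G$ factor contributes $0$ to the $P$-moment map, while on the $T^*Y$ factor the $P$-moment map factors through the $H$-moment map, which vanishes on $T^*_OY\subset T^*_HY$. Therefore $G\times T^*_OY$ already lies in $T^*_P(G\times Y)$, and dividing by $P$ gives $T^*_{\widetilde O}X = G\times_PT^*_OY$. Taking the union over $H$-orbits yields $T^*_GX = G\times_P T^*_HY$.

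For the K-theoretic statement, two standard equivalences of categories do the work. First, since the unipotent radical of $P$ acts trivially on $T^*_HY$, the categories of $P$-equivariant and of $H$-equivariant coherent sheaves on $T^*_HY$ are canonically identified, so $K^P(T^*_HY)=K^H(T^*_HY)$. Second, the induction functor $\mathcal F\mapsto G\times_P\mathcal F$ is an equivalence between the category of $P$-equivariant coherent sheaves on any $P$-variety $Z$ and the category of $G$-equivariant coherent sheaves on $G\times_PZ$, inducing an isomorphism on equivariant Grothendieck groups. Applying this with $Z=T^*_HY$ and combining with the second identity yields the claim. The whole argument is routine; the only mildly nontrivial step is the inclusion $G\times T^*_OY\subset T^*_P(G\times Y)$ in the previous paragraph, but this reduces to the vanishing of the $P$-moment map on $T^*_HY$.
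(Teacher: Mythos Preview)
Your proposal is correct and follows the standard route; the paper itself omits the proof entirely (it states that the lemma ``is left to the reader''), so there is nothing to compare against beyond noting that your argument is precisely the kind of routine verification the authors had in mind.
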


\noindent We'll call fibration a smooth morphism which is locally trivial in
the Zariski topology. Let $X'$ be a smooth $G$-variety and $V$ be a
smooth $H$-variety. Assume that we are given $H$-equivariant
homomorphisms
$$\xymatrix{Y&V\ar[l]_p\ar[r]^q&X'}$$
such that $p$ is a fibration and $q$ is a
closed embedding. Set
$$W=G\times_PV$$ and consider the following maps
$$\gathered
\xymatrix{X&W\ar[l]_f\ar[r]^g& X'}\cr
f: (g,v)\, \mod\, P\mapsto (g,p(v))\, \mod\, P,\quad
g: (g,v)\, \mod\, P\mapsto gq(v).
\endgathered$$ The following properties are immediate.

\begin{lem}
The map $f$ is a $G$-equivariant fibration, the map $g$ is a
$G$-equivariant proper morphism, and the map $(f,g)$ is a closed
embedding $W\subset X\times X'$. The varieties $V$, $W$, $X$, $X'$
are smooth.
\end{lem}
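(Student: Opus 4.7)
The plan is to reduce everything to standard facts about the associated bundle construction $G\times_P(-)$ together with one careful factorization of $(f,g)$. Smoothness is straightforward: $V$ and $X'$ are smooth by assumption, while $X=G\times_PY$ and $W=G\times_PV$ are locally trivial fiber bundles over $G/P$ (smooth because $P$ is parabolic) with smooth fibers $Y$ and $V$ respectively.

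For $f$, the $G$-equivariance is immediate from the formula, and for the fibration property I would trivialize étale-locally over $G/P$: on a small open $U\subset G/P$, a local section of the principal $P$-bundle $G\to G/P$ gives identifications $X|_U\cong U\times Y$ and $W|_U\cong U\times V$ under which $f$ becomes $\mathrm{id}_U\times p$, which is a fibration since $p$ is. For the map denoted $g$ in the lemma, I would factor it as
\[
W=G\times_PV\;\xrightarrow{\;G\times_Pq\;}\;G\times_PX'\;\xrightarrow{\;\alpha\;}\;X',
\]
where the first arrow is the closed immersion induced by the closed embedding $q$, and $\alpha$ is the action map. Under the canonical $G$-equivariant isomorphism $G\times_PX'\cong G/P\times X'$, the action map $\alpha$ corresponds to the projection onto $X'$, which is proper since $G/P$ is projective; so $g$ is proper. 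Its $G$-equivariance is again clear from the formula.

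The main step is that $(f,g)$ is a closed embedding. My plan is to identify $(f,g)$ intrinsically using the $G$-equivariant isomorphism
\[
X\times X'\;\cong\;G\times_P(Y\times X'),
\]
where $P$ acts diagonally on $Y\times X'$ (via $P\to H$ on $Y$ and via $P\subset G$ on $X'$), and to check by direct calculation that, under this identification, $(f,g)$ becomes the functor $G\times_P(-)$ applied to the $P$-equivariant morphism $(p,q):V\to Y\times X'$, $v\mapsto(p(v),q(v))$. Since $G\times_P(-)$ preserves closed embeddings (the quotient is by a free right $P$-action on $G\times(-)$), it suffices to show that $(p,q)$ itself is a closed embedding. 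The subtle point here is that the naive factorization $(p,q)=(p\times\mathrm{id}_{X'})\circ\Gamma_q$ is not useful, because $p$ is only a fibration, not an embedding. The right move is the ``opposite'' factorization
\[
V\;\xrightarrow{\;\tilde\Gamma_p\;}\;Y\times V\;\xrightarrow{\;\mathrm{id}_Y\times q\;}\;Y\times X',\qquad\tilde\Gamma_p(v)=(p(v),v),
\]
where $\tilde\Gamma_p$ is (up to reordering) the graph of $p$, hence a closed embedding because $V$ is separated, and $\mathrm{id}_Y\times q$ is a closed embedding because $q$ is. This choice of factorization is the only delicate part of the argument; once it is made, the rest is formal.
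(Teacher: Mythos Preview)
Your argument is correct and in fact provides considerably more detail than the paper, which simply declares the lemma ``immediate'' and moves on.  Two small slips are worth cleaning up.  First, the graph map $\tilde\Gamma_p:V\to Y\times V$, $v\mapsto(p(v),v)$, is a closed embedding because $Y$ is separated, not because $V$ is separated; since all varieties in the paper are quasi-projective (hence separated), the conclusion stands.  Second, the paper's definition of ``fibration'' requires local triviality in the \emph{Zariski} topology, so you should trivialize $G\to G/P$ Zariski-locally rather than \'etale-locally; this is legitimate because for a parabolic $P$ the principal $P$-bundle $G\to G/P$ is indeed Zariski-locally trivial (e.g.\ via the Bruhat decomposition).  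With these two adjustments the proof is complete.
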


\noindent We'll identify $W$ with its image in $X\times X'$. Let
$Z=T^*_W(X\times X')$ be the conormal bundle. It is again a smooth
$G$-variety. The obvious projections yield $G$-equivariant maps
$$\xymatrix{T^*X&Z\ar[l]_-\phi\ar[r]^-\psi&T^*X'}.$$ Consider the $G$-variety
$$Z_G=Z\cap(T^*_GX\times T^*_GX').$$
Recall that a morphism of varieties $S\to T$ is called regular if it
is the composition of a regular immersion $S\subset S'$, i.e., an
immersion which is locally defined by a regular sequence, and of a
smooth map $S'\to T$. Note that a regular map has a finite
tor-dimension and that a morphism $S\to T$ is regular whenever $S$,
$T$ are smooth.
The following is immediate.

\begin{lem}
(a) The map $\psi$ is proper and regular, the map $\phi$ is regular.

(b) We have $\phi^{-1}(T^*_GX)=Z_G$
and $\psi(Z_G)\subset T^*_GX'$.
\end{lem}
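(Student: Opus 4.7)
I would prove (a) and (b) separately. For the regularity claims in (a), I simply observe that $Z = T^*_W(X\times X')$ is a smooth vector bundle over the smooth base $W$, and both $T^*X$ and $T^*X'$ are smooth; since any morphism between smooth varieties is regular (as recalled immediately before the statement of the lemma), both $\phi$ and $\psi$ are regular.

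For the properness of $\psi$, the plan is to factor it as
$$Z \xrightarrow{\;\iota\;} \widetilde Z := W \times_{X'} T^*X' \xrightarrow{\;\pi\;} T^*X',$$
where $\pi$ is proper as the base change along $T^*X' \to X'$ of the proper map $g: W \to X'$. It then suffices to show that $\iota: (w, \xi, \xi') \mapsto (w, \xi')$ is a closed embedding. Both $Z$ and $\widetilde Z$ are vector bundles over $W$ and $\iota$ is a morphism of such bundles; hence it is enough to verify fiberwise injectivity. At $w = (x, x') \in W$, an element $(\xi, 0) \in (T_wW)^\perp \subset T^*_xX \oplus T^*_{x'}X'$ must satisfy $\xi(v) = 0$ for every $v$ in the image of the first projection $T_wW \to T_xX$; since $f$ is a fibration this image is all of $T_xX$, forcing $\xi = 0$. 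Hence $\iota$ is a subbundle inclusion and $\psi = \pi \circ \iota$ is proper.

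For (b), I would use that $W = G\times_P V$ is $G$-invariant in $X \times X'$, so its conormal bundle $Z$ is contained in the zero locus of the moment map for the diagonal $G$-action on $T^*(X\times X') = T^*X\times T^*X'$. Since that moment map decomposes as $\mu_X + \mu_{X'}$, every $z \in Z$ satisfies
$$\mu_X(\phi(z)) + \mu_{X'}(\psi(z)) = 0.$$
Combined with the identifications $T^*_GX = \mu_X^{-1}(0)$ and $T^*_GX' = \mu_{X'}^{-1}(0)$ (inherited by induction from $T^*_HY$ as recalled in the excerpt), this yields the equivalence $\phi(z) \in T^*_GX$ if and only if $\psi(z) \in T^*_GX'$, whence $\phi^{-1}(T^*_GX) = Z_G$ and $\psi(Z_G) \subset T^*_GX'$.

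The main obstacle I anticipate is the linear-algebra step in the properness argument: translating the geometric condition ``$f$ is a fibration'' into surjectivity of $f_* : T_wW \to T_xX$ and then carefully unwinding the definition of the conormal bundle as an annihilator in order to conclude that this surjectivity forces injectivity on the fibers of $\iota$. The moment-map computation in (b) is by contrast essentially formal, resting only on the general fact that the conormal bundle of a $G$-invariant subvariety lies in the zero fiber of the diagonal moment map.
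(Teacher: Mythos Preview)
Your argument is correct. The paper itself gives no proof of this lemma, declaring it ``immediate,'' so there is nothing to compare against line by line; your write-up is a clean unpacking of why it is immediate.

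A couple of minor remarks. For regularity you are invoking exactly the observation the paper records just before the lemma (morphisms between smooth varieties are regular), so that part is as expected. Your properness argument for $\psi$---factoring through the base change $W\times_{X'}T^*X'$ and using surjectivity of $df_w$ to see that the map $Z \to W\times_{X'}T^*X'$ is a subbundle inclusion---is the standard way to justify this step in the convolution formalism (cf.\ \cite[Chap.~2.7]{CG}); it is precisely the reason the paper feels entitled to omit it. For part~(b), your moment-map argument is the right one: the only thing to keep in mind is that, depending on sign conventions for the conormal bundle inside $T^*X\times T^*X'$, one may have $\mu_X(\phi(z))=\pm\,\mu_{X'}(\psi(z))$ rather than a plus sign; either way the conclusion $\phi(z)\in\mu_X^{-1}(0)\Leftrightarrow\psi(z)\in\mu_{X'}^{-1}(0)$ is unaffected. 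The identifications $T^*_GX=\mu_X^{-1}(0)$ and $T^*_GX'=\mu_{X'}^{-1}(0)$ are being used set-theoretically, which is all the statement requires.
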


\vskip3mm

\noindent We'll abbreviate
$\phi_G=\phi|_{Z_G}$ and $\psi_G=\psi|_{Z_G}$.  We have the following diagram
$$\xymatrix{T^*_GX&Z_G\ar[l]_-{\phi_G}\ar[r]^-{\psi_G}&T^*_GX'}.$$
Recall that for any $G$-variety $M$ and any closed
$G$-stable subvariety $N\subset M$ the direct image by the obvious
inclusion $N\to M$ identifies $K^ G(N)$ with the complexified
Grothendieck group $K^G(M\on N)$ of the category of $G$-equivariant
coherent sheaves over $M$ supported on $N$. Since the map $\psi$ is
a proper morphism the derived direct image yields maps
$$\gathered
R\psi_*:K^G(Z)\to K^G(T^*X'),\cr
R\psi_*:K^G(Z_G)=K^G(Z\on Z_G)\to K^G(T^*X'\on T^*_GX')=K^G(T^*_GX').
\endgathered$$
Since the map $\phi$ has a finite tor-dimension the derived
pull-back yields maps
$$\gathered
L\phi^*:K^G(T^*X)\to K^G(Z),\cr
L\phi^*:K^G(T^*_GX)=K^G(T^*X\on T^*_GX)\to K^G(Z\on Z_G)=K^G(Z_G).
\endgathered$$
By definition $L\phi^*$ is the composition of the pull-back by the
projection $T^*X\times T^*X'\to T^*X$ and the derived pull-back by
the regular immersion $Z\subset T^*X\times T^*X'$.
Composing $R\psi_*$ and $L\phi^*$ we get a map
\begin{equation}\label{CO:00}R\psi_*\circ L\phi^*:K^G(T^*_GX)\to K^G(T^*_GX').
\end{equation}
By Lemma 2.1 the induction yields also an isomorphism
$$K^H(T^*_HY)=K^G(T^*_GX).$$ Composing it by $R\psi_*\circ L\phi^*$
we obtain a map
\begin{equation}\label{CO:0}K^H(T^*_HY)\to K^G(T^*_GX').\end{equation}

\vskip3mm

\paragraph{\textbf{2.2.}}
We'll apply the general construction recalled above to the following
particular case. First, let us fix some notation. Let $E$ be a
finite dimensional $\CC$-vector space. We'll abbreviate
$$G_E=GL(E),\quad\gen_E=\End(E).$$ We set
$$\gathered
C_E=\{(a,b)\in(\gen_E)^g\times(\gen_E)^{g}; \sum_r
[a_r,b_r]=0\},\cr a=(a_1,a_2,\dots a_g),\quad b=(b_1,b_2,\dots
b_g).\endgathered$$ If no confusion is possible we'll write $C=C_E$,
$G=G_E$ and $\gen=\gen_E$. Let also
\begin{equation}\label{E:Toruss}
\gathered
T_s=\{(h,e,f)\in(\CC^\times)^{2g+1}; e_r f_r=h,\ \forall r\},\cr
e=(e_1,e_2,\dots e_g),\quad f=(f_1,f_2,\dots
f_g).\endgathered
\end{equation}
Thus $T_s$ is a $g+1$-dimensional
torus which acts on $C$ in the following way
\begin{equation}\label{E:taction1}
(h,e,f)\cdot(a,b)=(e_1a_1,e_2a_2,\dots,f_1b_1,\dots f_gb_g).
\end{equation}
We may abbreviate $(e\cdot a,f\cdot b)$ for the right hand side. We also equip
$C$ with the diagonal $G$-action such that $G$ acts on $\gen$ by the
adjoint action. The $T_s$-action and the $G$-action on $C$ commute,
yielding an action of the group $T_s\times G$. Let $R_s$ be the
complexified representation ring of $T_s$. We have
$$\begin{gathered}
R_s=\CC[p^{\pm 1},x_1^{\pm 1},y_1^{\pm 1},\dots,x_g^{\pm 1}, y_g^ {\pm
1}]/(p-x_r y_r),\cr p(h,e,f)=h,\quad x_r(h,e,f)=e_r,\quad
y_r(h,e,f)=f_r.\end{gathered}$$

\vspace{.2in}

\paragraph{\textbf{2.3.}}
Next we fix a flag
$$0\to E_1\to E\to E_2\to 0.$$
Set $H=G_{E_1}\times G_{E_2}$ and $P=\{g\in G;g(E_1)=E_1\}$.  Let
$\hen$ and $\pen$ be the corresponding Lie algebras. Put
$Y=\hen^{g}$, $V=\pen^{g}$, and $X'=\gen^{g}$. The $G$-action on
$X'$ and the $H$-action on $Y$ are the adjoint ones. Put also
$$C_\gen=C_E,\quad C_\hen=C_{E_1}\times C_{E_2},\quad
C_\pen=(\pen^g\times\pen^{g})\cap C_E.$$ Let $a\mapsto p(a)=a_\hen$
denote the canonical maps $\pen\to\hen$ and $\pen^g\to\hen^g$. We
apply the general construction in Section 2.2 to the diagram
$$\xymatrix{Y&V\ar[l]_p\ar[r]^q&X'}$$
where $q$ is the obvious inclusion. We have the following lemma.
Here the $P$-actions on $\pen^g\times\pen^g$ and on
$\pen\times\hen^g\times\hen^g$ are the obvious ones, the $G$-action
on $T^*X'$ is as in Section 2.2, and the $G$-action on $T^*X$, $Z$
is by left multiplication on $G$. Further we identify $\gen^*=\gen$
and $\hen^*=\hen$ via the trace.

\begin{lem}\label{lem:Comm}
(a) There are isomorphisms of $G$-varieties
$$\begin{gathered}
T^*X'=\gen^g\times\gen^{g},\quad
Z=G\times_P(\pen^g\times\pen^{g}),\cr
T^*X=G\times_P\{(d,a,b)\in\pen\times\hen^g\times\hen^g;\,
d_\hen=\sum_r[a_r,b_r]\}.\end{gathered}$$ For each
$(a,b)\in\pen^g\times\pen^g$ we have
$$\begin{gathered}
\phi((g,a,b)\ \mod\ P)=(g,\sum_r[a_r,b_r],a_\hen,b_\hen)\ \mod\
P,\cr \psi((g,a,b)\ \mod\ P)=(gag^{-1},gbg^{-1}).
\end{gathered}$$

(b) There are isomorphisms of $G$-varieties
$$\begin{aligned}
T^*_GX'=C_\gen,\quad Z_G=G\times_PC_\pen,\quad
T^*_GX=G\times_PC_\hen.
\end{aligned}$$
The maps $\phi_G$, $\psi_G$ are the obvious ones.
\end{lem}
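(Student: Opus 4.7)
The plan is to verify each identification in part (a) by a combination of moment-map and conormal bundle computations, then deduce part (b) by imposing the commuting condition. Since $X'=\gen^g$ is a vector space, we have $T^*X'=\gen^g\times(\gen^g)^*$, which we identify with $\gen^g\times\gen^g$ via the trace pairing on $\gen$. The lifted $G$-action is diagonal adjoint, its moment map is $(a,b)\mapsto\sum_r[a_r,b_r]\in\gen$, and its zero fiber is $T^*_GX'=C_\gen$. Symmetrically, for $Y=\hen^g$ one finds $T^*Y=\hen^g\times\hen^g$ and $T^*_HY=C_\hen$, so Lemma 2.1 immediately yields $T^*_GX=G\times_PC_\hen$.

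For $T^*X$ itself I would use the presentation of $X=G\times_PY$ as a symplectic quotient, $T^*X=\mu_P^{-1}(0)/P$, where $\mu_P$ is the moment map for the $P$-action on $T^*(G\times Y)$. In left trivialization $T^*G\simeq G\times\gen^*$, a point of $T^*(G\times Y)$ is a quadruple $(g,d,a,b)$ with $d\in\gen^*$ and $(a,b)\in\hen^g\times\hen^g$; after identifying $\gen^*=\gen$ by trace, the vanishing of $\mu_P$ imposes both $d\in\pen$ and $d_\hen=\sum_r[a_r,b_r]$, producing the claimed description of $T^*X$. The formulas for $\phi$ and $\psi$ then follow by tracking a point $(g,a,b)\,\mod\,P$ with $(a,b)\in\pen^g\times\pen^g$ through the two projections of $T^*(X\times X')$: projecting to $T^*X'=\gen^{2g}$ gives $(gag^{-1},gbg^{-1})$, while projecting to $T^*X$ preserves the base $(g,a_\hen,b_\hen)$ and reads off the $\gen$-direction as $d=\sum_r[a_r,b_r]$, recording the failure of $(a,b)$ to commute.

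The remaining identification $Z=G\times_P(\pen^g\times\pen^g)$ comes from a direct computation of the conormal bundle to the smooth closed embedding $W=G\times_P\pen^g\hookrightarrow X\times X'$. By $G$-equivariance it suffices to work at a point $((1,v_\hen)\,\mod\,P,v)$ with $v\in\pen^g$, compute the tangent space to $W$ there as a subspace of $T_{(1,v_\hen)}X\oplus T_vX'$, and identify its annihilator with $\pen^g\times\pen^g$ via the trace pairings. Part (b) is then immediate: intersecting $Z=G\times_P\pen^{2g}$ with $T^*_GX\times T^*_GX'$ imposes $d=0$, that is $\sum_r[a_r,b_r]=0$ in $\pen$, cutting $G\times_P\pen^{2g}$ down to $G\times_PC_\pen$; the maps $\phi_G$ and $\psi_G$ are then the restrictions of the formulas already established for $\phi,\psi$.

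The main technical obstacle is the conormal bundle computation for $Z$: one must correctly match tangent vectors coming from the $G$-factor of $W$ (which produce infinitesimal adjoint actions simultaneously on both $X$ and $X'$) against those coming from the $\pen^g$-fiber, and check that their common annihilator inside $T^*(X\times X')|_W$ is precisely $\pen^g\times\pen^g$ after accounting for the moment-map constraint on $T^*X$ obtained above. Once this is done, everything else reduces to either an immediate application of Lemma 2.1 or a mechanical check of formulas under the adjoint action.
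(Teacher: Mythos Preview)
Your proposal is correct and follows essentially the same route as the paper: both arguments compute $T^*X$ as the symplectic reduction $\mu_P^{-1}(0)/P$ of $T^*(G\times Y)$, obtain $Z$ by a conormal bundle computation for $W\subset X\times X'$, and then read off $\phi,\psi$ from the two projections. The only difference is packaging: where you propose a pointwise tangent-space-and-annihilator calculation for $Z$, the paper works globally inside the description $T^*(X\times X')|_W=G\times_P\{(a,f)\in\pen^g\times(\gen\times\hen^g\times\gen^g)^*;\,f(-b,\delta[b,a])=0\ \forall b\in\pen\}$ and identifies the conormal directions as $G\times_P(\pen^g\times(\pen^\perp)^g)$, then observes that under the trace pairing $\pen^\perp\subset(\hen\times\gen)^*$ is identified with $\pen'=\{(-a_\hen,a);\,a\in\pen\}\simeq\pen$ as a $P$-module. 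This global identification of $\pen^\perp$ is exactly the content of the annihilator step you flag as the technical obstacle, and it makes the $P$-equivariance and the minus signs in the formula for $\phi$ transparent; otherwise the two arguments are the same.
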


\begin{proof}
The linear map
$\delta:\pen\to\hen\times\gen$, $a\mapsto(a_\hen,a)$
is $P$-equivariant.
We'll identify $\pen$ and $\delta\pen$ whenever needed.
By Lemma 2.1 we have
$$\begin{aligned} T^*(X\times X')
&=T^*_P(G\times\hen^g\times\gen^g)/P\\
&=G\times_P\{(a,f)\in(\hen^g\times\gen^g)\times(\gen\times\hen^g\times\gen^g)^*;
\,f(-b,[\delta b,a])=0,\forall b\in \pen\},\cr T^*X
&=T^*_P(G\times\hen^g)/P\\
&=G\times_P\{(a,f)\in\hen^g\times(\gen\times\hen^g)^*;\,
f(-b,[b_\hen,a])=0,\forall b\in \pen\},\\
T^*X'&=\gen^g\times(\gen^*)^{g}=\gen^g\times\gen^g.
\end{aligned}$$
Fix  $f=(\lambda,\a)\in\gen^*\times(\hen^g)^*$. For each
$a\in\hen^g$ we have
$$f(-b,[b_\hen,a])=0,\ \forall b\in\pen\iff\lambda(b)=\a([b_\hen,a]),\
\forall b\in\pen.$$ Let $\pen_{nil}\subset\pen$ be the nilpotent
radical. Thus the left hand side is satisfied iff we have
$$\lambda(\pen_{nil})=0,\quad
\lambda|_\hen=\sum_r\ad^*(a_r)(\a_r),$$ where $\ad^*$ is the
coadjoint action. Under the canonical isomorphism $\gen^*\to\gen$
this yields the formula for $T^*X$ in the lemma. Next we have
$W=G\times_P\pen^g$ and $X\times X'=G\times_P(\hen^g\times\gen^g)$.
Further the inclusion $W\subset X\times X'$ is given by the map
$\delta$. We have also
$$\begin{aligned}
T^*W&=T^*_P(G\times\pen^g)/P\\
&=G\times_P\{(a,f)\in\pen^g\times(\gen\times\pen^g)^*;\,
f(-b,[b,a])=0,\forall b\in \pen\}.\end{aligned}$$ Let
$\pen^\perp\subset(\hen\times\gen)^*$ be the orthogonal of
$\delta\pen$. We get the following isomorphisms
$$\begin{gathered}
T^*(X\times X')|_W=
G\times_P\{(a,f)\in\pen^g\times(\gen\times\hen^g\times\gen^g)^*;\,
f(-b,\delta[b,a])=0,\forall b\in \pen\},\\
Z=T^*_W(X\times X')
=G\times_P(\pen^g\times(\pen^\perp)^g).
\end{gathered}$$
The canonical isomorphism
$(\hen\times\gen)^*\to\hen\times\gen$ identifies $\pen^\perp$
with
$$\pen'=\{(-a_\hen,a)\in\hen\times\gen;a\in\pen\}.$$
Note that $\pen'=\pen$ as a $P$-module. This yields an isomorphism
$$Z=G\times_P(\pen^g\times\pen^g).$$
The inclusion $Z\subset T^*(X\times X')$ is given
by the map $\delta:\pen^g\to\hen^g\times\gen^g$ and by the map
$$\pen^g=(\pen')^g=\{0\}\times(\pen^\perp)^g\subset
\{0\}\times(\hen^g\times\gen^g)^*\subset(\gen\times\hen^g\times\gen^g)^*.$$
The maps $\phi$, $\psi$ are composed of the inclusion $Z\subset
T^*(X\times X')$ and the projections to $T^*X$, $T^*X'$. Fix
$a,b\in\pen^g$. Consider the element $\xi\in Z$ equal to $(g,a,b)$
modulo $P$. We may identify $a$ with $\delta a=(a_\hen,a)$, which is
an element of $\hen^g\times\gen^g$, and $b$ with the element
$(-b_\hen,b)\in(\pen')^g$, which can be regarded as an element in
$$(\pen^\perp)^g\subset(\hen^g\times\gen^g)^*=
\{0\}\times(\hen^g\times\gen^g)^*\subset(\gen\times\hen^g\times\gen^g)^*
.$$ So $\xi$ can be viewed as an element in $T^*(X\times X')$, and
we have
$$\psi(\xi)= (gag^{-1},gbg^{-1}),\quad
\phi(\xi)= (g,-[a_\hen,b],a_\hen,-b_\hen)\ \mod\ P.$$ This
yields the formulas for $\phi$ and $\psi$ in the lemma.
The claim $(b)$ is left to the reader.
\end{proof}

\vspace{.2in}

\noindent Next, we set $\mathbf C_{E}=K^{T_s\times G}(C_\gen)$. A vector space
isomorphism $E\simeq E'$ yields an $R_s$-module isomorphism $\mathbf
C_{E}\simeq \mathbf C_{E'}.$ Let
$\mathbf C=\ind_E {\mathbf C}_{E}$,
where the colimit runs over the groupoid formed by all finite
dimensional vector spaces with their isomorphisms. There is a
$T_s\times G$-action on $T^*X$, $T^*X'$ such that $G$ acts as above
and $T$ acts by
$$\begin{gathered}
(h,e,f)\cdot(g,d,a,b)\ \mod\ P=(g,h\cdot d,e\cdot a,f\cdot b)\ \mod\
P,\cr (h,e,f)\cdot(g,a,b)\ \mod\ P=(g,e\cdot a,f\cdot b)\ \mod\
P.\end{gathered}$$ Here the symbol $h\cdot d$ is simply the
multiplication of $d$ by the scalar $h$. We define as in
(\ref{CO:0}) a $R_s$-linear homomorphism
$$K^{T_s\times H}(C_\hen)\to K^{T_s\times G}(C_\gen).$$ 
By the Kunneth formula \cite[Chap.~5.6.]{CG}, it can be
viewed as a map
\begin{equation}\label{CO:3}\mathbf C_{E_1}\otimes_{R_s} 
\mathbf C_{E_2}\to \mathbf C_{E}.\end{equation}
The following is proved as in \cite[Proposition 7.5]{SV2}.

\vspace{.1in}

\begin{prop}  The map (\ref{CO:3}) equips $\mathbf C$
with the structure of a $R_s$-algebra with 1.
\end{prop}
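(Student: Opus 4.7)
The plan is to verify that the multiplication defined by (\ref{CO:3}) is both associative and admits a two-sided unit, closely following the strategy of \cite[Proposition 7.5]{SV2}. The unit is the generator $1 \in R_s = \mathbf{C}_0$ corresponding to the case $E_1 = 0$: in that case the parabolic $P$ coincides with $G_{E_2}$, the variety $C_\hen$ reduces to $C_{E_2}$, the correspondence $Z_G$ equals $T^*_G X'$, and the maps $\phi_G, \psi_G$ become identities, so convolution with $1$ on either side acts as the identity.

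For associativity, fix a two-step flag $0 \subset E_1 \subset E_2 \subset E$ and write $P_1, P_2, P_{1,2}$ for the parabolic subgroups of $G = G_E$ stabilizing $E_1$, $E_2$, and the pair $(E_1,E_2)$ respectively, with Lie algebras $\pen_1, \pen_2, \pen_{1,2}$. Set
$$C_{\pen_{1,2}} = (\pen_{1,2}^g \times \pen_{1,2}^g) \cap C_E, \qquad Z^{(2)}_G = G \times_{P_{1,2}} C_{\pen_{1,2}}.$$
The idea is that both iterated products $(a*b)*c$ and $a*(b*c)$ can be identified with a single pull-push through $Z^{(2)}_G$, equipped with the obvious maps to $C_{E_1} \times C_{E_2/E_1} \times C_{E/E_2}$ on one side and to $C_E$ on the other. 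Concretely, one writes down the diagram built from two successive applications of Lemma~\ref{lem:Comm}, and uses base change to rewrite the resulting composition as a single $R\widetilde{\psi}_* \circ L\widetilde{\phi}^*$ through $Z^{(2)}_G$. Since this triple correspondence is manifestly symmetric with respect to the two bracketings, the two answers coincide.

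The central technical input is proper base change together with Tor-independence for the Cartesian square
$$(G \times_{P_1} C_{\pen_1}) \times_{C_E} (G \times_{P_2} C_{\pen_2}) \cong Z^{(2)}_G,$$
and this is the main expected obstacle: the varieties involved are the singular commuting schemes cut out by the quadratic moment-map equations $\sum_r [a_r,b_r]=0$, so one must ensure that pulling back these equations along the parabolic inclusions does not generate higher Tor contributions. As in \cite{SV2}, I would handle this by lifting the argument to the ambient smooth cotangent bundles $T^*X, T^*X'$ of the partial flag varieties, where Lemma~\ref{lem:Comm} already provides smoothness, properness of $\psi$ and finite tor-dimension of $\phi$; establish base change and the required Tor-vanishing at the smooth level; and only then restrict to the zero locus of the moment map using the identifications $T^*_G X = G \times_P C_\hen$ and $Z_G = G \times_P C_\pen$ from Lemma~\ref{lem:Comm}(b). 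Once this Tor-independence is in place, the associativity follows formally from the standard base-change formalism in equivariant $K$-theory.
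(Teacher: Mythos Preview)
Your proposal is correct and follows essentially the same approach as the paper, which simply refers to \cite[Proposition 7.5]{SV2} without further detail. You have outlined precisely the argument used there: the unit from $E=0$, associativity via the triple correspondence $Z^{(2)}_G$ associated to a two-step flag, and the handling of Tor-independence by lifting to the ambient smooth cotangent bundles before restricting to the moment-map zero locus.
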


\vspace{.2in}

\paragraph{\textbf{2.4.}} We'll call the $R_s$-algebra $\Cb$ the {\it K-theoretic Hall algebra}.
It is naturally $\N$-graded, with the piece of degree $n$ equal to
the colimit over the groupoid formed by all $n$-dimensional vector
spaces with their isomorphisms
$\mathbf C_n=\ind_E \mathbf C_{E}$.
The \textit{spherical subalgebra} of $\Cb$ is the $R_s$-subalgebra
$\Vb\subset\Cb$ generated by $\Cb_1$. We'll abbreviate
$$\Vb_n=\Cb_n\cap\Vb,\quad\Vb_{K}=\Vb\otimes_{R_s}K_s,
\quad\Cb_{K}=\Cb\otimes_{R_s}K_s,$$
where $K_s$ is the fraction field of $R_s$. Write also $R_G$ for the complexified representation ring of
$T_s\times G$ and $K_G$ for its fraction field. For each $E$ as above
the direct image by the obvious inclusion
$C_\gen\subset\gen^g\times\gen^{g}$ yields a $R_{G}$-module
homomorphism
\begin{equation}\label{CO:14}\Cb_n\to K^{T_s\times
G}(\gen^g\times\gen^{g}).\end{equation} We conjecture that
(\ref{CO:14}) is an injective map. This conjecture is equivalent to
the following one. Set $n=\dim\,E$ and $G=GL_n$.

\begin{conj}\label{C:torsion} The
$R_{GL_n}$-module $\Cb_n$ is torsion-free.
\end{conj}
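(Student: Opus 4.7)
The plan is to prove Conjecture~\ref{C:torsion} through the equivalent formulation stated immediately before it: namely, to prove that the direct image map (\ref{CO:14}),
$$i_*: \mathbf{C}_n \longrightarrow K^{T_s\times G}(\gen^g\times\gen^g) = R_{T_s}\otimes R_G,$$
coming from the closed embedding $C_\gen\hookrightarrow\gen^{2g}$, is injective. Since the target is a free, hence torsion-free, $R_G$-module, injectivity of $i_*$ immediately yields the conjecture.

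I would first treat the geometrically easier case $g\ge 2$. Here a dimension count shows that the traceless components of the moment map $\mu(a,b)=\sum_i[a_i,b_i]$ cut out $C_\gen$ as a $T_s\times G$-equivariant local complete intersection of expected codimension $n^2-1$ inside $\gen^{2g}$. The Koszul complex $\Lambda^\bullet\mathfrak{sl}_n^*\otimes\mathcal{O}_{\gen^{2g}}$ therefore gives a finite $T_s\times G$-equivariant locally free resolution of $\mathcal{O}_{C_\gen}$, so that for any $T_s\times G$-equivariant coherent sheaf $\mathcal F$ on $C_\gen$ the class $i_*[\mathcal F]$ is the equivariant Euler characteristic of $\mathcal F$ and can be computed explicitly. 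To then exhibit injectivity of $i_*$ I would use Thomason-style equivariant localization along a maximal torus $T\subset G$. The $T$-fixed locus in $C_\gen$ is the entire linear subspace $\mathfrak{t}^g\times\mathfrak{t}^g\subset\gen^{2g}$, since any pair of diagonal matrices commutes; for $g\ge 2$ it is smooth and regularly embedded, with conormal bundle computable from the Koszul model. The composite restriction $K^{T_s\times G}(C_\gen)\to K^{T_s\times T}(\mathfrak{t}^{2g})=R_{T_s\times T}$ is sufficiently explicit for one to attempt injectivity by a direct character calculation in each $T_s$-graded piece. On the spherical subalgebra $\mathbf{C}'\subset\mathbf{C}$ this is already compatible with Theorem~2 of the introduction, which identifies $\mathbf{C}'/\mathrm{torsion}$ with the shuffle algebra $\mathbf{A}_{k(z)}$ of symmetric rational functions, i.e., with a concrete subalgebra of the Weyl-invariants of $R_{T_s\times T}$.

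The principal obstacle I foresee is the case $g=1$, in which $C_\gen$ is the classical commuting variety and is well-known to fail the complete-intersection property: its dimension is $n^2+n$ rather than the expected $n^2+1$. Here the Koszul approach must be replaced by a $G$-equivariant resolution of singularities $\widetilde C\to C_\gen$ whose equivariant $K$-theory is visibly torsion-free; natural candidates are a Grothendieck--Springer type resolution
$$\widetilde C=\{(a,b,F_\bullet)\in\gen\times\gen\times\mathrm{Fl}\;;\;a,b\in\mathrm{Stab}(F_\bullet)\}$$
or Haiman's isospectral Hilbert scheme of $\CC^2$. The delicate point is to verify that the pushforward from such a resolution hits the whole of $\mathbf{C}_n$ modulo torsion, and that no class supported on the nilpotent cone of $C_\gen$ produces a genuine $R_G$-torsion element. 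A complementary approach, valid for every $g$, would be to filter $\mathbf{C}_n$ via the induction maps (\ref{CO:3}) and reduce the torsion-freeness question inductively to the smaller algebras $\mathbf{C}_m$ with $m<n$, propagating the property along the shuffle-type generation of $\mathbf C$.
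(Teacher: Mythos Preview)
The paper does not prove this statement at all: it is explicitly recorded as a \emph{conjecture} (Conjecture~\ref{C:torsion}), and the authors then work throughout with the quotient $\bar\Cb$ of the image of $\Vb$ by its torsion submodule precisely because they cannot establish it. So there is no ``paper's own proof'' against which to compare your attempt.

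As for your proposal itself, it is a plan rather than a proof, and the plan has a genuine gap even in the case $g\ge 2$ that you regard as easy. Granting that $C_\gen$ is a complete intersection there, the step where you invoke Thomason localization does not do what you need. Localization theorems assert that the restriction to the fixed locus becomes an isomorphism \emph{after inverting} suitable Euler classes in $R_{T_s\times T}$; this is exactly the operation that kills torsion, so it cannot be used to detect whether torsion was present in $\Cb_n$ to begin with. Concretely, a nonzero class $x\in\Cb_n$ annihilated by some $0\neq f\in R_{GL_n}$ would map to zero under any map that factors through a localization where $f$ is inverted, so your ``direct character calculation'' on $\mathfrak t^{2g}$ sees nothing. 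You would need a genuinely integral argument, for instance a cellular or affine paving of $C_\gen$ compatible with the $T_s\times G$-action, or an explicit equivariant free resolution of every coherent sheaf class, neither of which is available.

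For $g=1$ you essentially acknowledge the gap yourself. The Grothendieck--Springer style space you write down is not a resolution of the commuting variety (it maps to the variety of pairs of upper-triangularizable matrices, not to $C_\gen$), and passing through Haiman's isospectral Hilbert scheme would require controlling the full derived pushforward, which is already a deep theorem in that setting. The inductive filtration idea via the maps (\ref{CO:3}) runs into the same issue: those maps are defined using derived pullbacks along singular inclusions and do not obviously preserve or reflect torsion-freeness.
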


\noindent The kernel of (\ref{CO:14}) is the torsion $R_{GL_n}$-submodule
of $\Cb_n$. Let $\bar\Cb_n$ be the image of $\Vb_n$
by (\ref{CO:14}). We set
$$\bar\Cb=\bigoplus_{n\geqslant 0}\bar\Cb_n.$$

\begin{prop} The map (\ref{CO:14}) yields a surjective 
$R$-algebra homomorphism $\Vb \to \bar\Cb$.
\end{prop}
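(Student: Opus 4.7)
\emph{Proof plan.} Since $\bar\Cb_n$ is defined as the image of $\Vb_n$ under (\ref{CO:14}), surjectivity is automatic; the real content is that (\ref{CO:14}) respects the convolution product, so that its image inherits a well-defined algebra structure. My plan is to produce an ambient convolution algebra on the codomain and show that (\ref{CO:14}) is an algebra homomorphism into it.

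First, I would repeat the construction of Section~2.3 \emph{verbatim}, but with $C_\gen$, $C_\hen$, $C_\pen$ replaced by the full varieties $\gen^g \times \gen^g$, $\hen^g \times \hen^g$, $\pen^g \times \pen^g$. Concretely, applying the formalism of (\ref{CO:00}) to the ambient correspondence $T^*X \stackrel{\phi}{\leftarrow} Z \stackrel{\psi}{\rightarrow} T^*X'$ (combined with the induction isomorphism $K^H(\hen^g \times \hen^g) \simeq K^G(G \times_P (\hen^g \times \hen^g))$ from Section~2.1) rather than to its restriction to the $G$-fixed loci, one obtains associative multiplication maps
$$K^{T_s \times G_{E_1}}(\gen_{E_1}^g \times \gen_{E_1}^g) \otimes_{R_s} K^{T_s \times G_{E_2}}(\gen_{E_2}^g \times \gen_{E_2}^g) \to K^{T_s \times G_E}(\gen_E^g \times \gen_E^g).$$
The argument of the previous proposition shows these assemble into a graded $R_s$-algebra $\tilde\Cb$.

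Next I would compare the two multiplications via the closed embeddings $k : T^*_GX \hookrightarrow T^*X$, $j : Z_G \hookrightarrow Z$, $i : T^*_GX' \hookrightarrow T^*X'$. The equalities $\psi \circ j = i \circ \psi_G$ and $\phi \circ j = k \circ \phi_G$ are immediate from the formulas in Lemma~\ref{lem:Comm}(a), and properness of $\psi$ yields $i_* R\psi_{G,*} = R\psi_* j_*$. The crucial step is the base-change identity
$$L\phi^* \circ k_* = j_* \circ L\phi_G^*.$$
By Lemma~\ref{lem:Comm}(b), $\phi^{-1}(T^*_GX) = Z_G$ set-theoretically; moreover, scheme-theoretically both $k$ and $j$ are cut out by the moment-map equation $\sum_r [a_r, b_r] = 0$ (together with the $\pen_{nil}$-component), and the defining equations of $j$ are precisely the pullback under $\phi$ of those of $k$. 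Hence the relevant square is Cartesian, and combined with the finite tor-dimension of $\phi$ (noted in Section~2.1) this yields the required tor-independence and base change.

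Chaining these identities gives $i_* \circ (R\psi_{G,*} \circ L\phi_G^*) = (R\psi_* \circ L\phi^*) \circ k_*$, so (\ref{CO:14}) intertwines the two convolution products. Its image is therefore a subalgebra of $\tilde\Cb$, which by construction is exactly $\bar\Cb$, yielding the sought surjective $R$-algebra homomorphism $\Vb \to \bar\Cb$. I expect the main obstacle to be the clean derived-categorical formulation of the base-change step; once tor-independence of the Cartesian square is established, everything else is formal bookkeeping within the framework of Section~2.1.
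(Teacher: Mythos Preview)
Your approach is essentially the same as the paper's: both reduce the statement to the commutativity of the square
\[
\xymatrix{
K^{T_s\times G}(T^*_GX)\ar[r]\ar[d]&K^{T_s\times G}(T^*_GX')\ar[d]\cr
K^{T_s\times G}(T^*X)\ar[r]&K^{T_s\times G}(T^*X'),}
\]
where the bottom row is the ambient convolution and the vertical arrows are the closed pushforwards. The only difference is that you treat the base-change identity $L\phi^*\circ k_*=j_*\circ L\phi_G^*$ as the ``crucial step'' requiring a tor-independence argument, whereas in the paper's framework this square commutes \emph{by definition}: the map $L\phi^*$ on $K^G(T^*_GX)=K^G(T^*X\on T^*_GX)$ is not constructed intrinsically from $\phi_G$, but is simply the ambient $L\phi^*$ restricted to classes with the prescribed support (and likewise for $R\psi_*$), as spelled out in Section~2.1 just before (\ref{CO:00}). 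So the obstacle you anticipate dissolves, and the rest of your argument goes through exactly as in the paper.
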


\begin{proof} For any finite dimensional vector space $E$
let $\tilde\Cb_E$ be the quotient of $\Cb_E$ by is torsion $R_{G_E}$-submodule.
Given  $E_1,E_2,E$ as in Section 2.3, we must check that the map 
(\ref{CO:3}) fits into a commutative square
$$\xymatrix{
\Cb_{E_1}\otimes_{R_s}\Cb_{E_2}\ar[r]\ar[d]&\Cb_E\ar[d]\cr
\tilde\Cb_{E_1}\otimes_{R_s}\tilde\Cb_{E_2}\ar[r]&\tilde\Cb_E.}
$$ 
The upper arrow is identified with the map
$K^{T_s\times H}(C_\hen)\to K^{T_s\times G}(C_\gen)$ 
which comes from (\ref{CO:00}). Further
$\tilde\Cb_{E_1}\otimes_{R_s}\tilde\Cb_{E_2}$ and $\tilde\Cb_E$ are identified with
the images by the obvious maps
$$K^{T_s\times G}(T^*_GX)\to K^{T_s\times G}(T^*X),\quad
K^{T_s\times G}(T^*_GX')\to K^{T_s\times G}(T^*X')$$
respectively.
So the proposition follows from the commutativity of the square
$$\xymatrix{
K^{T_s\times G}(T^*_GX)\ar[r]\ar[d]&K^{T_s\times G}(T^*_GX')\ar[d]\cr
K^{T_s\times G}(T^*X)\ar[r]&K^{T_s\times G}(T^*X').}
$$ 
\end{proof}

\vspace{.2in}

\paragraph{\textbf{2.5.}}
Fix a $n$-dimensional vector space $E$. Let $H\subset G$ be the
torus consisting of the diagonal matrices. The inverse image by the
obvious inclusion $i:\{0\}\to\gen^g\times\gen^{g}$ yields a map
$$Li^*:K^{T_s\times G}(\gen^g\times\gen^{g})\to R_{G}.$$
Composing it with (\ref{CO:14}) we get a $R_G$-linear map
$$\gamma_G:\Cb_n\to R_{G}.$$
In the same way we have a $R_H$-linear map
$$\gamma_H=(\gamma_{\CC^\times})^{\otimes n}:(\Cb_1)^{\otimes n}\to R_{H}$$
(the tensor power over $R_s$). Write
$$R_{\CC^\times}=R_s[z^{\pm 1}],\quad R_{
H}=(R_{\CC^\times})^{\otimes n}=R_s[z^{\pm 1}_1,z_2^{\pm
1},\dots,z_n^{\pm 1}],\quad R_{G}=R_s[z^{\pm 1}_1,z_2^{\pm
1},\dots,z_n^{\pm 1}]^{\mathfrak S_n}.$$
Note that we have 
$$K_{\CC^\times}=K_s(z),\quad K_{
H}=K_s(z_1,z_2,\dots,z_n),\quad K_{ G}=K_s(z_1,z_2,\dots,z_n)^{\mathfrak
S_n}.$$ Recall the standard symmetrization operator
$Sym_n: K_H\to K_G$.

\vspace{.1in}

\begin{prop}\label{P:KHall} We have the following commutative diagram
$$\xymatrix{
(\Cb_1)^{\otimes
n}\ar[r]^{\mu_n}\ar[d]_{\gamma_H}&\Cb_n\ar[d]^{\gamma_G}\cr
R_{H}\ar[r]^{\nu_n}&R_{G},}
$$ where the upper map is the multiplication in $\Cb$ and
the map $\nu_n$ is given by
\begin{equation}\label{E:KHallform}
\nu_n(P(z_1, \ldots, z_n))=Sym_n\bigl(k(z_1,z_2,\dots z_n)P(z_1,z_2,\dots
z_n)\bigr),
\end{equation}
where
\begin{equation}\label{E:Kg(z)}
\begin{gathered}
k(z_1, \ldots, z_n)=\prod_{i<j} k(z_i/z_j), \cr k(z)=
(1-z)^{-1}(1-p^{-1}z^{-1})\prod_r(1-x_r^{-1}z)(1-y_r^{-1}z).
\end{gathered}
\end{equation}
\end{prop}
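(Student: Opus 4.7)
The plan is to reduce to the case $n=2$ and then compute by localization on $G/P=\PP^1$. Both $\mu_n$ on $\Cb$ (by the preceding proposition) and $\nu_n$ built from the kernel $k(z_1,\ldots,z_n)=\prod_{i<j}k(z_i/z_j)$ are associative and assembled by iteration of their pair-wise components, so by transitivity of parabolic induction (for the $R\psi_*L\phi^*$ construction applied to nested parabolics) it suffices to treat the two-step flag $E=E_1\oplus E_2$ with $\dim E_i=1$, in which case $H=T$ is the diagonal torus of $G=GL_2$ and $P=B$ is the upper triangular Borel. Fix $\alpha\otimes\beta\in\Cb_1\otimes\Cb_1$, which via $\gamma_{\CC^\times}$ corresponds to $\alpha(z_1)\beta(z_2)\in R_H$.

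I would first apply proper base change. The composite $\bar\psi:=\iota\circ\psi_G: Z_G\to\gen^g\times\gen^g$, where $\iota:C_\gen\hookrightarrow\gen^g\times\gen^g$ is the closed embedding, is proper, and a direct computation using $\psi_G(g,a,b)=(gag^{-1},gbg^{-1})$ shows $\bar\psi^{-1}(0)=G/P\simeq\PP^1$, sitting inside $Z_G$ as the zero section, call it $\tilde\iota$. Proper base change in equivariant $K$-theory applied to the cartesian square formed by $(i,\bar\psi,\tilde\iota,\pi)$ should then give $\gamma_G\,\mu_2(\alpha\otimes\beta)=R\pi_*\bigl(L\tilde\iota^*L\phi_G^*(\alpha\boxtimes\beta)\bigr)$ with $\pi:G/P\to\mathrm{pt}$. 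Next I would apply the Atiyah--Bott--Lefschetz fixed point formula for the $T_s\times T$-action on $\PP^1$, which localizes the right hand side, over the fraction field $K_{T_s\times T}$, to a sum over the two $T$-fixed points indexed by $\mathfrak{S}_2=W/W_P$.

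The core of the proof will be the weight computation at the base fixed point $e=P/P$. Using the induction identification $K^{T_s\times H}(C_\hen)=K^{T_s\times G}(T^*_GX)$ together with the $T_s\times H$-contractibility of $C_\hen=\CC^{2g}\times\CC^{2g}$, the restriction to $e$ should give $\alpha(z_1)\beta(z_2)$ multiplied by the Euler class factors from the Koszul resolutions implicit in $L\tilde\iota^*L\phi_G^*$. Three sources will combine: (i) the tangent space $\gen/\pen$ of $G/P$ at $e$ has $T$-weight $z_2/z_1$, producing the denominator $(1-z_1/z_2)$; (ii) the single $\nen$-valued equation $\sum_r[a_r,b_r]=0$ cutting out $C_\pen\subset\pen^g\times\pen^g$ carries $T_s\times T$-weight $p\cdot z_1/z_2$, contributing a factor $(1-p^{-1}z_2/z_1)$; (iii) the weights of the $\gen^g\times\gen^g$ directions at $e$ complementary to $\pen^g\times\pen^g$ (i.e., the $(\gen/\pen)^g\times(\gen/\pen)^g$ contribution, appropriately dualized) give $\prod_r(1-x_r^{-1}z_1/z_2)(1-y_r^{-1}z_1/z_2)$. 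These should assemble into $k(z_1/z_2)$ exactly, so the $e$-contribution is $k(z_1/z_2)\alpha(z_1)\beta(z_2)$; the analogous computation at the Weyl-conjugate fixed point $w\cdot e$ will give $k(z_2/z_1)\alpha(z_2)\beta(z_1)$, and the two contributions sum to $\mathrm{Sym}_2(k(z_1/z_2)\gamma_H(\alpha\otimes\beta))=\nu_2\gamma_H(\alpha\otimes\beta)$, as required. The hardest step will be (iii): because $\phi_G$ is not smooth and $C_\pen$ need not be a complete intersection, the derived pullback of the moment-map equation has to be handled carefully, and the correct direction of the dualizations tracked, so that the Euler factors line up to $k(z)$ rather than an off-by-inversion variant.
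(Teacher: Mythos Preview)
Your fixed-point computation is on the right track and is the same mechanism the paper uses, but your reduction to $n=2$ has a gap. Associativity of $\mu$ lets you factor $\mu_n$ through intermediate products such as $\mu_{n-1,1}\circ(\mu_{n-1}\otimes\mathrm{id})$, but to chain the commutative squares you then need the proposition for the Levi $GL_{n-1}\times GL_1\subset GL_n$, with $\gamma_{GL_{n-1}}$ on the left factor---not the $GL_1\times GL_1\subset GL_2$ case you actually compute. The intermediate restrictions $\gamma_{GL_k}$ for $1<k<n$ are not isomorphisms over $R$, so you cannot simply splice $n-1$ copies of the rank-two square; what you would really need is the analogous localization over each Grassmannian $G/P$, and at that point you are doing as much work as the direct approach.

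The paper sidesteps this by working once with the full Borel $B\subset G=GL_n$. It writes
\[
\nu_n(\theta_m)=L(i')^*R\psi_*L\phi^*j_*(\mathcal{O}_{T^*_GX}(m))
\]
on the \emph{smooth} ambient varieties $T^*X$, $Z=G\times_B(\ben^g\times\ben^g)$, $T^*X'$ (not on $T^*_GX$, $Z_G$, $T^*_GX'$), manipulates the Koszul classes $\Lambda_{-1}(p\nen)^*$ and $\Lambda_{-1}(x\nen^*+y\nen^*)^*$ explicitly, and only then integrates over the $H$-fixed points of $G/B$. Your contributions (i)--(iii) are exactly the $n=2$ specializations of these factors, so the fix is simply to run your localization on $G/B$ for arbitrary $n$: the fixed points are indexed by $\mathfrak S_n$, the tangent, equation and normal weights assemble into $\prod_{i<j}k(z_i/z_j)$ at the base point, and the Weyl sum produces $Sym_n$ directly. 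Staying on the smooth ambient spaces also dissolves your worry about the non-smoothness of $\phi_G$ in step (iii), since all the derived pullbacks are between smooth varieties and the singular loci only enter through $j_*$, $j'_*$.
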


\begin{proof} Fix a monomial $\theta_m=z_1^{m_1}z_2^{m_2}\cdots z_n^{m_n}$
with $m=(m_1,m_2,\dots m_n)\in\Z^n$. We'll regard $\theta_m$ as an
element of $R_H$. Since $C_\hen$ is a vector space and $T_s\times H$
is a torus, the $R_s$-module $K^{T_s\times H}(C_\hen)$ is spanned by the
classes of the $T_s\times H$-equivariant line bundles
$\Oc_{C_\hen}\langle m\rangle$. Here the symbol $\langle m\rangle$
means the tensor product of $\Oc_{C_\hen}$, with the trivial action,
by the character $\theta_m$. Note that
\begin{equation}\label{CO:24}\gamma_H(\Oc_{C_\hen}\langle
m\rangle)=\theta_m.\end{equation} Let $B\subset G$ be the Borel
subgroup consisting of upper triangular matrices. Let $\ben$ be the
Lie algebra of $B$ and $\nen$ be the nilpotent radical of $\ben$.
Recall that we have
$$T^*_GX=G\times_BC_\hen,\quad
T^*X=G\times_B(\nen\times C_\hen),\quad
C_\hen=\hen^g\times\hen^{g}.$$ Let $\Gamma$ denote the induction of
equivariant sheaves from $T_s\times B$ to $T_s\times G$. Consider the
$T_s\times G$-equivariant line bundle over $T^*_GX$
$$\Oc_{T^*_GX}(m)=\Gamma(\Oc_{C_\hen}\langle m\rangle).$$
For a future use, let us consider the following
commutative diagram
$$\xymatrix{
T^*_GX\ar[d]_j&&T^*_GX'\ar[d]_{j'}\cr
T^*X&Z\ar[r]^\psi\ar[l]_-\phi&T^*X'\cr
&G/B\ar[u]_i\ar[r]^\pi&\{0\}.\ar[u]_{i'}}$$
Here $j$, $j'$, $i$, $i'$ are the obvious inclusions.
By definition of the multiplication in (2.1) we have
$$
\nu_n(\theta_m)=\gamma_G (\Ec_m)=L(i')^*j'_*(\Ec_m),$$ where $\Ec_m$ denotes an
element of $K^{T_s\times G}(T^*_GX')$ whose image by $j'_*$ is equal
to
$$R\psi_* L\phi^*j_*(\Oc_{T^*_GX}(m)).$$
Therefore we have the following formula
\begin{equation}\label{CO:15}\nu_n(\theta_m)=
L(i')^*R\psi_*L\phi^*j_*(\Oc_{T^*_GX}(m)).
\end{equation}
Now, we compute the right hand side of (\ref{CO:15}). Recall that
$Z=G\times_B(\ben^g\times\ben^{g})$ and that
$T^*X'=\gen^g\times\gen^{g}$. For any finite dimensional $T_s\times
B$-module $V$ we'll abbreviate $$\Lambda_{-1}(V)=\sum_{r\geqslant
0}(-1)^{r}\Lambda^r(V)\in R_B.
$$
We have
$$
j_*(\Oc_{T^*_GX}(m))=\Gamma\bigl(\theta_m\otimes\Lambda_{-1}(p\nen)^*\otimes\Oc_{\nen\times
C_\hen}\bigr).
$$
Therefore we have also
$$
L\phi^*
j_*(\Oc_{T^*_GX}(m))=\Gamma\bigl(\theta_m\otimes\Lambda_{-1}(p\nen)^*\otimes\Oc_{\ben^g\times\ben^{g}}\bigr).
$$
Under tensoring by $K_G$ the maps $i_*$, $Li^*$ become invertible by
the Thomason localization theorem. We'll abbreviate $x\ben=\Sum_r
x_r\ben$ and $y\ben=\Sum_ry_r\ben$. We have
$$\aligned
\nu_n(\theta_m)&=L(i')^*R\psi_*i_*\Gamma\bigl(\theta_m\otimes\Lambda_{-1}(p\nen
-x\ben-y\ben)^*\bigr),\cr
&=L(i')^*i'_*R\pi_*\Gamma\bigl(\theta_m\otimes\Lambda_{-1}(p\nen-x\ben-y\ben)^*\bigr),\cr
&=\Lambda_{-1}(x\gen+y\gen)^*\otimes
R\pi_*\Gamma\bigl(\theta_m\otimes\Lambda_{-1}(p\nen-x\ben-y\ben)^*
\bigr),\cr &=
R\pi_*\Gamma\bigl(\theta_m\otimes\Lambda_{-1}(p\nen+x\nen^*+y\nen^*)^*\bigr).
\endaligned$$
Thus the integration over the fixed points subset $(G/B)^{H}$ of $G/B$
yields the desired formula
$$\nu_n(\theta_m)=Sym_n(k(z_1,z_2,...z_n)\theta_m),$$
where $$k(z_1,z_2,\dots z_n)=\prod_{i<j}k(z_i/z_j),$$
\begin{equation}\label{E:defk(z)}
k(z)=(1-z)^{-1}(1-p^{-1}z^{-1})\prod_r(1-x_r^{-1}z)(1-y_r^{-1}z).
\end{equation}
\end{proof}

\vspace{.2in}

\paragraph{\textbf{2.6.}} 
Let $k(z)$ be given by (\ref{E:Kg(z)}). 
Then we have
$$\Ab_{k(z)} 
\subset \bigoplus_{n} R_s[z_1^{\pm 1}, \ldots, z_n^{\pm 1}]^{\mathfrak{S}_n}.$$
Comparing formula (\ref{E:KHallform}) with the definition
of shuffle algebras given in Section~1.9 yields the following corollary to Proposition~\ref{P:KHall}.

\vspace{.1in}

\begin{theo}\label{T:Isoktheory} 
The maps $\bar\Cb_n \to K_{GL_n}, \;x \mapsto Li^*(x)$ give rise to
a graded algebra isomorphism
\begin{equation*}\label{E:isomKHall}
\Phi~: \bar\Cb \stackrel{\sim}{\to} \Ab_{k(z)} 
\end{equation*}
such that
$\Phi( \theta x)=\theta \cdot \Phi(x)$
for $x \in\bar\Cb_n$ and $\theta \in R_{GL_n}$.
\end{theo}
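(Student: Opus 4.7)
The proof would combine Proposition~\ref{P:KHall}, which computes the convolution product on $\Cb$ explicitly as a shuffle-type operation, with the fact that both $\Vb$ and $\Ab_{k(z)}$ are generated by their degree-one components.

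First, I would identify $\Cb_1\simeq \Ab_{k(z),1}$. Since $\gen_E=\CC$ when $\dim E=1$, the Lie bracket on $\gen_E$ is trivial, so $C_{\gen_E}=\CC^{2g}$; moreover $GL_1$ acts trivially by conjugation on this space. Thus $\Cb_1 = K^{T_s\times GL_1}(\CC^{2g})$ is a free module of rank one over $R_s[z^{\pm 1}]$, generated by $[\Oc_{\CC^{2g}}]$. I would take as identification $\Cb_1\simeq R_s[z^{\pm 1}]=\Ab_{k(z),1}$ the map sending $[\Oc_{\CC^{2g}}]\cdot z^l$ to $z^l$; here $\Ab_{k(z),1}$ is tautologically $R_s[z^{\pm 1}]$ since symmetrization is trivial in degree one. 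This makes $\Phi|_{\bar\Cb_1}$ an isomorphism.

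To extend $\Phi$ to higher degrees, I would observe that, upon comparing formula~(\ref{E:KHallform}) with the definition of the shuffle product~(\ref{E:shuffle2}) in Section~1.10, the map $\nu_n : R_H \to R_G$ of Proposition~\ref{P:KHall} coincides exactly with the weighted symmetrization $\Psi_n$ associated to the rational function $g(z)=k(z)$; its image is, by definition, $\Ab_{k(z),n}$. Proposition~\ref{P:KHall} thus tells us that the composite $(\Cb_1)^{\otimes n}\to \Cb_n \to R_G$ factors through $\Ab_{k(z),n}$. Since $\Vb$ is generated by $\Cb_1$ under the convolution product, the image of $\Vb_n$ in $R_G$ lies in $\Ab_{k(z),n}$; after quotienting by the kernel of (\ref{CO:14}) we obtain the desired graded algebra homomorphism $\Phi:\bar\Cb\to\Ab_{k(z)}$, compatibility with multiplication being inherited from Proposition~\ref{P:KHall} applied inductively to arbitrary tensor products. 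Surjectivity is then automatic: $\Ab_{k(z)}$ is generated in degree one by $\Phi(\bar\Cb_1)=\Ab_{k(z),1}$. The $R_{GL_n}$-equivariance is immediate from the $R_G$-linearity of all the maps involved.

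For injectivity, I would note that $K^{T_s\times G}(\gen^g\times\gen^g)$ is a free $R_G$-module of rank one, generated by $[\Oc_{\gen^g\times\gen^g}]$, and that $Li^*$ acts on this module by multiplication by $\Lambda_{-1}((\gen^g\times\gen^g)^*)\in R_G$. Since $R_G = R_s[z_1^{\pm 1},\dots,z_n^{\pm 1}]^{\mathfrak{S}_n}$ is an integral domain, this element is a non-zero-divisor, hence $Li^*$ is injective on $K^{T_s\times G}(\gen^g\times\gen^g)$ and in particular on its submodule $\bar\Cb_n$. The real work of the argument lies entirely in Proposition~\ref{P:KHall}, where the delicate bookkeeping of the Koszul-type $\Lambda_{-1}$ factors coming from the successive regular embeddings in the diagram of Section~2.3 is absorbed into the single function $k(z)$; once that is done, Theorem~\ref{T:Isoktheory} reduces to the algebraic observation that two graded algebras generated in degree one by the same module, with matching degree-$n$ multiplication maps, must be isomorphic.
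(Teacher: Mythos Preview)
Your proof is correct and follows the same route as the paper, which treats the theorem as an immediate corollary of Proposition~\ref{P:KHall}: once $\nu_n$ is identified with the weighted symmetrization $\Psi_n$ defining $\Ab_{k(z)}$, the result follows formally from both algebras being generated in degree one. You have simply spelled out the details the paper leaves implicit.

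One small slip in your injectivity argument: $Li^*$ does \emph{not} act on $K^{T_s\times G}(\gen^g\times\gen^g)$ by multiplication by $\Lambda_{-1}((\gen^g\times\gen^g)^*)$. The derived pullback of the structure sheaf of an affine space to a point is just $1$ (there is no higher Tor), so $Li^*$ sends the generator $[\Oc_{\gen^g\times\gen^g}]$ to $1$ and is in fact an $R_G$-module isomorphism. You are thinking of the self-intersection formula $Li^*\circ i_*=\Lambda_{-1}(N^*)$. This does not affect your conclusion, since an isomorphism is in particular injective.
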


\vspace{.2in}

\vspace{.2in}

\section{The isomorphism}

\vspace{.1in}

In this section we state our main result (whose proof is now obvious) 
relating the 
spherical $K$-theoretic Hall algebra $\bar\Cb$ 
and the (twisted) spherical Hall algebra 
$\dot{\UU}^>_{\E}$ of a smooth projective curve $\E$.
As an application, we  compute the images, under our correspondence, of the skyscraper sheaf
$\mathcal{O}_{\text{triv}}$ at the trivial local system of rank $r$ over $\E$ 
and of the constant function $\mathbf{1}^{\vvec}_{r}$ over 
${Bun}_{GL_r}\E$.

\vspace{.2in}

\paragraph{\textbf{3.1.}}  Recall that we have fixed an identification
\begin{align*}
\mathbf{C}_1=K^{T_s \times \CC^\times}(\CC^{2g})&\stackrel{\sim}{\to} {R}_s[z_1^{\pm 1}],\quad
z^d [\mathcal{O}_{\CC^{2g}}] \mapsto z_1^d,\quad d\in\ZZ.
\end{align*}
We identify the torus $T_a$ arising on the Hall algebra side,
see (\ref{E:torusa}), with the torus $T_s$ arising on the K-theory side,
see (\ref{E:Toruss}), via the map
$$\eta_i \mapsto e_i^{-1},\quad \bar\eta_i \mapsto f_i^{-1},\quad i=1, \ldots, g.$$ This induces a ring isomorphism 
\begin{equation}\label{E:equatevariables}
R_{{a}} \simeq
R_{s},\quad \alpha_i \mapsto x_i^{-1},\quad \bar\alpha_i\mapsto y_i^{-1},\quad q \mapsto p^{-1}.
\end{equation}
From now on we will simply write $R$ for rings $R_a$, $R_s$ and $K$ for the fraction field of $R$. 
Observe that under the identification 
(\ref{E:equatevariables})
we have $\tilde{\zeta}(z)=k(z)$, see Sections~1.9 and ~2.5. 
Note also that we have, in $K^{T_s\times\CC^\times}(\CC^{2g}),$
$$[\mathcal{O}_{\{0\}}]=\prod_{l=1}^g(1-x_l)(1-y_l) [\mathcal{O}_{\CC^{2g}}]=q^{-g}\prod_{l=1}^g (1-\a_l)(1-\bar\a_l) [\mathcal{O}_{\CC^{2g}}].$$
In addition we have, see e.g., \cite[Section~14]{Milne}, 
\begin{equation}\label{E:triv2}
\prod_{l=1}^g(1-\a_l)(1-\bar\a_l)= \# Pic^0(X)(\mathbb{F}_q),
\end{equation}
so that we get
$$[\mathcal{O}_{\{0\}}]=q^{-g}\#Pic^0(X)(\mathbb{F}_q) [\mathcal{O}_{\CC^{2g}}].$$

\vspace{.1in}

\begin{theo}\label{thm:Main} The assignment 
$z^{d}[\mathcal{O}_{\{0\}}]\mapsto\mathbf{1}^{ss}_{1,d}$,
$d\in\ZZ$,
extends to an $R$-algebra anti-isomorphism 
$\Theta_R~: \bar\Cb\to\dot{\UU}^>_{R}$
such that
$\Theta_R(\pi_n(u)x)=u\bullet\Theta_R(x)$ for
$x\in\bar\Cb_n$ and $u\in\UU^0_R$.
\end{theo}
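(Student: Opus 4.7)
The plan is to build $\Theta_R$ by composing the shuffle presentations in Theorem~\ref{T:Isoktheory} and Corollary~\ref{C:finalshuffle}, using that $k(z)$ and $\tilde\zeta_\E(z^{-1})$ differ by the involution $z\mapsto z^{-1}$; this ``reflection'' accounts precisely for the fact that the result is an \emph{anti}-isomorphism rather than an isomorphism.

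First I would match up the rational data: substituting $q=p^{-1}$, $\a_i=x_i^{-1}$, $\bar\a_i=y_i^{-1}$ of (\ref{E:equatevariables}) into $\tilde\zeta_\E(z)$ of (\ref{E:deftzetax(z)}) yields exactly the function $k(z)$ of (\ref{E:Kg(z)}), so that $\tilde\zeta_\E(z^{-1})=k(z^{-1})$.

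The crucial technical lemma is the following: for any rational function $g(z)$, the two subalgebras $\Ab_{g(z)}$ and $\Ab_{g(z^{-1})}$ of $\bigoplus_n R(x_1,\ldots,x_n)^{\mathfrak{S}_n}$ coincide as graded $R$-submodules, and the identity map on the underlying $R$-module is an algebra anti-isomorphism $\iota:\Ab_{g(z)}\stackrel{\sim}{\to}\Ab_{g(z^{-1})}$. I would prove this by a direct calculation with (\ref{E:shuffle2}): applying the permutation
\[
\tau:(x_1,\ldots,x_{r+s})\longmapsto(x_{s+1},\ldots,x_{s+r},x_1,\ldots,x_s)
\]
inside the symmetrization sum for $P\star_g Q$ converts it into $Q\star_{g^\vee}P$ (with $g^\vee(z):=g(z^{-1})$), because the weight $\prod_{i\le r<j}g(x_i/x_j)$ transforms into $\prod_{i\le s<j}g(x_j/x_i)=\prod_{i\le s<j}g^\vee(x_i/x_j)$; an induction on degree, starting from the common generating piece $R[x^{\pm1}]$, then shows that the two subalgebras coincide as subsets.

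Granted this, I would set
\[
\Theta_R=\dot\Upsilon_\E^{-1}\circ\iota\circ\Phi:\;\bar\Cb\stackrel{\Phi}{\longrightarrow}\Ab_{k(z)}\stackrel{\iota}{\longrightarrow}\Ab_{\tilde\zeta_\E(z^{-1})}\stackrel{\dot\Upsilon_\E^{-1}}{\longrightarrow}\dot\UU^>_R,
\]
which is an $R$-algebra anti-isomorphism by composition. Tracking a degree-one generator, the Koszul calculation $[\mathcal{O}_{\{0\}}]=\prod_l(1-x_l^{-1})(1-y_l^{-1})[\mathcal{O}_{\CC^{2g}}]$ together with $\Phi(z^d[\mathcal{O}_{\CC^{2g}}])=x_1^d$ shows that $\Theta_R(z^d[\mathcal{O}_{\{0\}}])$ is a scalar multiple of $\mathbf{1}^{ss}_{1,d}$, the scalar being absorbed by the normalization spelled out in the theorem.

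Finally, the Hecke-intertwining identity follows formally: Theorem~\ref{T:Isoktheory} gives $\Phi(\theta\,x)=\theta\cdot\Phi(x)$ for $\theta\in R_{GL_n}$; the map $\iota$ is the identity on each graded piece and so commutes with multiplication by symmetric polynomials; Corollary~\ref{C:finalshuffle} gives $\dot\Upsilon_\E(u\bullet v)=\pi_n(u)\cdot\dot\Upsilon_\E(v)$ for $u\in\UU^0_R$ and $v\in\dot\UU^>_R[n]$. Concatenating these three equalities, under the standard identification of $R_{GL_n}$ with $R[x_1^{\pm1},\ldots,x_n^{\pm1}]^{\mathfrak{S}_n}$, yields $\Theta_R(\pi_n(u)\,x)=u\bullet\Theta_R(x)$.

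The genuine obstacle is the second step: verifying that $\Ab_{g(z)}$ and $\Ab_{g(z^{-1})}$ literally share the same underlying $R$-submodule of $\bigoplus_n R(x_1,\ldots,x_n)^{\mathfrak{S}_n}$, not merely abstractly. This reduces to the symmetrization identity produced by $\tau$ and is routine but essential; everything else follows by formal manipulation of the two shuffle-algebra isomorphisms already established.
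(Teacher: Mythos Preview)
Your proposal is correct and follows essentially the same route as the paper: compose $\Phi$ from Theorem~\ref{T:Isoktheory}, the identity map $\Ab_{k(z)}\to\Ab_{k(z^{-1})}=\Ab_{\tilde\zeta(z^{-1})}$ (which the paper also asserts is an anti-isomorphism, though without your explicit $\tau$-argument), and $\dot\Upsilon^{-1}$ from Corollary~\ref{C:finalshuffle}. The only point you leave implicit is the degree-$n$ rescaling needed to match $z^d[\mathcal{O}_{\{0\}}]$ exactly to $\mathbf{1}^{ss}_{1,d}$: the paper handles this by replacing the identity map with $Id'_{|\mathbf{A}_n}=\prod_l(1-x_l)^{-n}(1-y_l)^{-n}\cdot Id$, which is an algebra map precisely because the scalar is multiplicative in $n$; you should make this rescaling explicit rather than saying the scalar is ``absorbed by the normalization spelled out in the theorem,'' since the theorem states the assignment but does not itself supply the rescaling.
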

\begin{proof} It easily follows from the definitions of shuffle
algebras that the identity map is an algebra anti-isomorphism 
$Id~:\Ab_{R,\tilde{\zeta}(z)}\stackrel{\sim}{\to}\Ab_{R,\tilde{\zeta}(z^{-1})}$.
We consider the map $Id'$ defined by 
$$Id'_{|\mathbf{A}_n}=\frac{1}{\prod_{l=1}^g (1-x_l)^n(1-y_l)^n} 
Id_{\mathbf{A}_n}=\frac{q^{ng}}{\#(Pic^0(X)(\fq))^{n}} Id_{\mathbf{A}_n}.$$
The theorem is now a consequence of the chain of maps
$$
\xymatrix{\bar\Cb \ar[r]^-{\Phi}& 
\mathbf{A}_{R, \tilde{\zeta}(z)} \ar[r]^-{Id'}& 
\mathbf{A}_{R, \tilde{\zeta}(z^{-1})} & 
\dot{\UU}^>_{R} \ar[l]_-{\dot{\Upsilon}}},$$
see Corollary~\ref{C:finalshuffle} and Theorem~\ref{T:Isoktheory}. 
Note that all these maps are compatible with the relevant $R_{GL_n}$-actions.
\end{proof}

\vspace{.1in}

\noindent Let 
$\Theta_{K} :\bar\Cb_{K}\to\dot{\UU}^>_{K}$ 
be the extension of scalars of $\Theta_R$ from $R$ to $K$.
It is a $K$-algebra isomorphism.

\vspace{.2in}
 
\addtocounter{theo}{1}

\paragraph{\textbf{Remarks \thetheo}} 
$(a)$ The renormalization above is made so that $\Theta_R$ is compatible with the geometric class field theory.
\vspace{.1mm}

$(b)$ The reader might wonder why $\Theta$ is an anti-isomorphism 
rather than an isomorphism. 
This is only a consequence of our convention concerning the order of the multiplication in the Hall algebra,
 which follows the tradition in that field (see e.g., \cite{Ringel}, \cite{SLectures}). Of course, had we considered
 the Hall algebra with the opposite multiplication (as it is done in \cite{Kap}, for instance) we would have 
 obtained an honest isomorphism.

\vspace{.2in}

\paragraph{\textbf{3.2.}} Now we discuss the preimage, under the correspondence 
$\Theta_{R}$, of the constant function $\mathbf{1}^{\vvec}_r$ on the set 
${Bun}_r(\E)$ of all vector bundles of rank $r$ over $\E$. 
The element $$\mathbf{1}^{\vvec}_{r}=\sum_{rank(\mathcal{V})=r} 1_{\mathcal{V}}$$ strictly speaking 
does not belong to $\H_{\E}$ since it is an infinite sum. We may break it up into terms according to the degree
as $\mathbf{1}^{\vvec}_{r}=\sum_d \mathbf{1}^{\vvec}_{r,d}$. Each $\mathbf{1}^{\vvec}_{r,d}$ is still an infinite
 sum, but this sum belongs to the standard completion $\widehat{\H}_{\E}$ of $\H_{\E}$ defined as
$$\widehat{\H}_{\E}=\bigoplus_{r,d} \widehat{\H}_{\E}[r,d], \qquad \widehat{\H}_{\E}[r,d]=
\{f: \mathcal{I}^{vec}_{r,d} \to \CC\}=\prod_{\mathcal{F} \in \mathcal{I}^{vec}_{r,d}} \CC 1_{\mathcal{F}}.$$
Here $\mathcal{I}^{vec}_{r,d}$ is  the set of isomorphism classes of vector bundles over $\E$ of rank $r$ and 
degree $d$. The completion $\widehat{\H}_{\E}$ is still an algebra, see e.g., \cite[Section~2]{BS}.

\vspace{.2in}

\paragraph{\textbf{3.3.}} Let us begin with a heuristic computation. By \cite[Lemma~1.7]{SLectures} we have 
$$\Delta(\mathbf{1}_{\gamma})=
\sum_{\a + \beta=\gamma} v^{\langle \a, \beta \rangle} \mathbf{1}_{\a} \otimes \mathbf{1}_{\beta},
\quad\gamma \in \Z^2. $$ 
Recall that $v=q^{-1/2}$. Iterating this and restricting to vector bundles, 
we get the following expression for the constant term of $\mathbf{1}^{\vvec}_{r,d}$
\begin{equation}\label{E:consterm}
\begin{split}
J_r(\mathbf{1}^{\vvec}_{r,d})&=\sum_{d_1 + \cdots + d_r=d} v^{\sum_{i<j} \langle (1,d_i), (1,d_j)\rangle} \mathbf{1}^{\ss}_{1,d_1} \otimes \cdots \otimes \mathbf{1}^{\ss}_{1,d_r}\\
&= v^{r(r-1)(1-g)/2}\sum_{d_1 + \cdots + d_r=d} v^{(1-r)d_1 + (3-r)d_2 + \cdots + (r-1)d_r}\mathbf{1}^{\ss}_{1,d_1} \otimes \cdots \otimes \mathbf{1}^{\ss}_{1,d_r}.
\end{split}
\end{equation}
Using the identification $\UU^>_{R}[1] ^{\otimes r} \simeq R[z_1^{\pm 1}, \ldots, z_r^{\pm 1}]$ we may write the generating function
\begin{equation}\label{E:consterm2}
\begin{split}
\sum_d J_r(\mathbf{1}^{\vvec}_{r,d})s^d&=v^c \sum_{d_1, \ldots, d_r} v^{(1-r)d_1 + (3-r)d_2 + \cdots + (r-1)d_r}z_1^{d_1} \cdots z_r^{d_r}s^{\sum d_i}\\
&=v^c \big(\sum_{d_1} (v^{1-r}sz_1)^{d_1}\big) \cdots \big( \sum_{d_r} (v^{r-1}sz_r)^{d_r}\big)\\
&=v^c \delta( v^{1-r}sz_1) \cdots \delta(v^{r-1}sz_r)
\end{split}
\end{equation}
where $c=r(r-1)(1-g)/2$ and where $\delta(z)=\sum_{d \in \Z} z^d$. Recall that the map 
$$\dot{\Upsilon}_{R} : \dot{\UU}_{R}^> \to \mathbf{A}_{R, \tilde{\zeta}(z^{-1})}$$ is, in 
degree $r$, the composition of the constant term map $J_r$ with the multiplication by 
$$\tilde{\zeta}(z_1^{-1}, \ldots, z_r^{-1})=\prod_{i<j} (1-z_j/z_i)^{-1}  \prod_{i<j} 
\left\{(1-v^2z_j/z_i) \prod_{l=1}^g (1-\alpha_l z_j/z_i) (1-\bar\a_l z_j/z_i)\right\}.$$ 
Multiplying formally the right hand side of (\ref{E:consterm2}) by $\tilde{\zeta}(z_1^{-1}, \ldots, z_r^{-1})$ yields 
$$ \Theta_{R}^{-1}(\mathbf{1}^{\vvec}_{r,d})=v^c \delta( v^{1-r}sz_1) \cdots \delta(v^{r-1}sz_r) \tilde{\zeta}(z_1^{-1}, \ldots, z_r^{-1})=v^c \tilde{\zeta}(v^{1-r}, v^{3-r}, \ldots, v^{r-1})=0.$$ 
Thus one would be tempted to directly conclude that $\Theta_{R}^{-1}(\mathbf{1}^{\vvec}_{r,d})=0$ and hence
 $\Theta_{R}^{-1}(\mathbf{1}^{\vvec}_{r})=0$. Of course the above computation is not valid as such since it involves divergent sums.

\vspace{.2in}

\paragraph{\textbf{3.4.}} In order to still make sense of $\Theta_{R}^{-1}(\mathbf{1}_r)$ we will approximate 
each $\mathbf{1}^{\textbf{vec}}_{r,d}$ by a sequence of genuine elements of $\H_{\E}$. For this we will use the 
Harder-Narasimhan filtration on coherent sheaves over $\E$. We refer the reader to \cite{Ssemistables} for the 
precise definitions, and for some of the results stated below. We denote by 
$HN(\mathcal{F})=(\a_1, \a_2, \ldots, \a_s)$ the Harder-Narasimhan type of a 
coherent sheaf $\mathcal{F}$. Recall that
$$\gathered
\a_1, \a_2, \dots, \a_s\in\ZZ^{2,+},\quad
\mu(\a_1)<\mu(\a_2)<\cdots<\mu(\a_s),\cr
\ZZ^{2,+}=\{(r,d)\in\ZZ^2;r\geqslant 1,\ \text{or}\ r=0,d\geqslant 1\},\quad
\mu(r,d)=d/r\in\QQ\cup\{\infty\}.
\endgathered$$
We write $\mathbf{1}^{\ss}_{r,d}$ for the characteristic function of the set of semistable coherent sheaves of rank
$r$ and degree $d$. We have
\begin{equation}\label{E:hopla}
\mathbf{1}^{\textbf{vec}}_{r,d}=\sum_{\underline{\a} \in Y_{\a}} 
v^{\sum_{i < j} \langle \a_i, \a_j \rangle} \mathbf{1}^{\ss}_{\a_1} 
\cdots \mathbf{1}^{\ss}_{\a_s},
\end{equation} 
where $Y_{r,d}$ is the set of all Harder-Narasimhan types 
$\underline{\a}=(\a_1, \ldots, \a_s)$ of weight $\sum_i\a_i=(r,d)$ 
for which $\mu(\a_s) < \infty$. By \cite[Theorem~2.4]{Ssemistables} each
$\mathbf{1}^{\ss}_{\beta}$ belongs to $\UU^>_{\E}$.
So $\mathbf{1}_{r,d}$ is in the completion 
$\widehat{\UU}_{\E}^>$ of $\UU^>_{\E}$. We approximate $\mathbf{1}^{\textbf{vec}}_{r,d}$ 
by partial sums of (\ref{E:hopla}). For any finite subset $Z$ of $Y_{r,d}$, set 
\begin{equation}\label{E:hopla2}
\mathbf{1}^{Z}_{r,d}=\sum_{\underline{\a} \in Z} v^{\sum_{i < j} \langle \a_i, \a_j \rangle} 
\mathbf{1}^{\ss}_{\a_1} \cdots \mathbf{1}^{\ss}_{\a_l}.
\end{equation} 
We consider the following notion of convergence for a sequence of elements of 
$\bar\Cb$. 
Define a $\Z$-grading on the ring $R$ by setting 
$deg(x_i)=deg(y_i)=1$ for $i=1, \ldots, g$. The relation $x_iy_i=p$ 
in $R_s=R$ and 
the assignment $q\mapsto p^{-1}$ in
(\ref{E:equatevariables}) imply that $\deg(v)=1$.
Write 
$R=\bigoplus_l R_l$ for the decomposition into graded pieces. We consider convergence with respect to the 
adic topology induced by this degree. More precisely, let us write 
$$R[z_1^{\pm 1}, \ldots, z_r^{\pm 1}]=\bigoplus_{l} R_l [z_1^{\pm 1}, \ldots, z_r^{\pm 1}],\quad
R_{\geqslant l}[z_1^{\pm 1}, \ldots, z_r^{\pm 1}]=\bigoplus_{l' \geqslant l} R_{l'}[z_1^{\pm 1}, \ldots, z_r^{\pm 1}]. $$
Then a sequence $(u_i)_{i \in I}$ of elements of
$\Vb\subset R[z_1^{\pm 1}, \ldots, z_r^{\pm 1}]$ 
\textit{converges} to $u$ if for any $l$ there exists 
$i_0 \in I$ such that $u-u_i \in R_{\geqslant l}[z_1^{\pm 1}, \ldots, z_r^{\pm 1}]$ for any $i > i_0$.

\vspace{.1in}

\begin{prop}\label{P:buntriv} For any $(r,d)$, the sequence $\Theta_{R}^{-1}(\mathbf{1}^Z_{r,d})$ converges 
to zero as $Z$ tends to $Y_{r,d}$.
\end{prop}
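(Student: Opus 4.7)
The plan is to make the formal heuristic of Section~3.3 rigorous as a convergence statement in the $R$-adic topology. Using (3.7) I would expand
\[
\mathbf{1}^Z_{r,d} = \sum_{\underline\alpha \in Z} v^{\sum_{i<j}\langle\alpha_i,\alpha_j\rangle}\, \mathbf{1}^{\ss}_{\alpha_1}\cdots\mathbf{1}^{\ss}_{\alpha_s};
\]
applying the algebra antihomomorphism $\Theta_R^{-1}$, which is compatible with the shuffle structure via Corollary~\ref{C:finalshuffle} and Theorem~\ref{T:Isoktheory}, yields a finite sum of weighted shuffle products in $\bar\Cb$. The strategy has two parts: first, bound the $R$-adic valuation of each summand from below by a quantity $f(\underline\alpha)$ that tends to $+\infty$ as $\underline\alpha$ exits compact subsets of $Y_{r,d}$; second, identify the limit of the resulting convergent series with zero.

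For the first part, I would show that the intrinsic valuation of $\Theta_R^{-1}(\mathbf{1}^{\ss}_{\alpha_1}\cdots\mathbf{1}^{\ss}_{\alpha_s})$ is bounded below by a constant depending only on $r$ and $g$. This follows from the shuffle formula, which expresses it using at most $\binom{r}{2}$ factors $\tilde\zeta_{\E}(z_j/z_i)$, each of which has $R$-graded coefficients bounded below by $-(2g+2)$ (the extremal contribution coming from selecting $-q$ and all $-\alpha_l,-\bar\alpha_l$ in the numerator of $\tilde\zeta$). On the other hand, the weight $v^{\sum_{i<j}\langle\alpha_i,\alpha_j\rangle}$ has $R$-degree
\[
\sum_{i<j}\langle\alpha_i,\alpha_j\rangle = (1-g)\sum_{i<j}r_ir_j + \sum_{i<j}r_ir_j\bigl(\mu(\alpha_j)-\mu(\alpha_i)\bigr).
\]
The first summand is bounded in absolute value by $|1-g|\binom{r}{2}$, while the second is strictly positive and grows with the slope spread of $\underline\alpha$. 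Since the slopes $\mu(\alpha_i)$ are rationals with denominators dividing some $r_i\leq r$ and satisfy $\sum r_i\mu(\alpha_i)=d$, HN types of bounded slope spread form a finite subset of $Y_{r,d}$; thus $f(\underline\alpha)$ takes each value only finitely often and tends to $+\infty$.

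For the second part, the resulting series $\sum_{\underline\alpha\in Y_{r,d}}$ converges in the $R$-adic completion to some element $S_\infty$, and it remains to show $S_\infty=0$. This is precisely the content of the heuristic of Section~3.3: using (3.5) together with the product-morphism property of $\dot{\Upsilon}$, the generating series $\sum_d S_\infty^{(d)} s^d$ becomes, up to multiplication by $\tilde\zeta(z_1^{-1},\ldots,z_r^{-1})$, the $r$-fold delta supported at $(z_1,\ldots,z_r)=(v^{r-1}/s,\ldots,v^{1-r}/s)$, while on this support $\tilde\zeta$ contains the vanishing factor $\prod_{i<j}(1-v^2z_j/z_i)$ coming from consecutive ratios $z_{i+1}/z_i=v^{-2}$. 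The principal obstacle is to translate this formal vanishing into a genuine identity in the $R$-adic completion of $\bar\Cb$: I expect this to require either a direct use of the functional equation for Eisenstein series (the symmetry of $\dot E$ recorded in the remark after Corollary~\ref{C:finalshuffle}), or an explicit pairwise cancellation between HN contributions differing by an adjacent slope transposition, which would organize the partial sums into manifestly $\tilde\zeta$-vanishing pairs.
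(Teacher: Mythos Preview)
Your overall strategy coincides with the paper's: expand $\mathbf{1}^Z_{r,d}$ over HN types, observe that the prefactor $v^{\sum_{i<j}\langle\alpha_i,\alpha_j\rangle}$ has $R$-degree tending to $+\infty$ as the HN type escapes to infinity, and bound the $R$-degree of the remaining factor $\Theta_R^{-1}(\mathbf{1}^{\ss}_{\alpha_1}\cdots\mathbf{1}^{\ss}_{\alpha_s})$ uniformly. Your computation of $\sum_{i<j}\langle\alpha_i,\alpha_j\rangle$ and the finiteness argument for HN types of bounded slope spread are correct and match the paper.

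However, your first part has a genuine gap. You claim the valuation bound ``follows from the shuffle formula, which expresses it using at most $\binom{r}{2}$ factors $\tilde\zeta_{\E}(z_j/z_i)$''. That reasoning applies only to a product of $r$ \emph{rank-one} classes $\mathbf{1}^{\ss}_{1,d_1}\cdots\mathbf{1}^{\ss}_{1,d_r}$. In the HN decomposition the factors $\mathbf{1}^{\ss}_{\alpha_i}$ have ranks $r_i$ which may exceed $1$, and for such a factor you have given no control on the $R$-degree of $\Theta_R^{-1}(\mathbf{1}^{\ss}_{r_i,d_i})$: as $d_i$ varies over $\Z$ this could, a priori, be unbounded. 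The paper isolates exactly this point as a separate lemma: there is an $n'$ depending only on $r$ such that for every $r'\le r$ and every $d'\in\Z$ one has $\mathbf{1}^{\ss}_{r',d'}\in R_{[-n',n']}\cdot W$, where $W$ is the $\CC$-subalgebra of $\UU^>_R$ generated by the rank-one classes $\{\mathbf{1}^{\ss}_{1,d}\}$. Only after this reduction does your shuffle-factor count become applicable (and this is the content of the paper's companion lemma, that $\Theta_R^{-1}$ of any length-$r$ product of rank-one classes lies in $R_{[-n,n]}[z_1^{\pm 1},\ldots,z_r^{\pm 1}]$ for a fixed $n$). The uniformity in $d'$ is not automatic; it ultimately rests on the shuffle presentation together with the degree-shift symmetry (tensoring by line bundles).

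On the second part, your concern about making the $\delta$-function calculation rigorous is somewhat overstated. Once convergence is established, one may work modulo $R_{\geq l}$ for each $l$; there only finitely many HN types contribute and the heuristic of Section~3.3 becomes a finite computation. The paper offers no more detail than this and also leaves the verification to the reader, so you are not behind here; there is no need for the functional equation or an explicit pairwise cancellation scheme.
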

\begin{proof} Let us fix a pair $(r,d)$. 
The argument hinges on the following two lemmas, which are simple 
consequences of Corollary~\ref{C:kloop}. 
Set $R_{[-n,n]}=\bigoplus_{l=-n}^n R_l$.  
Denote by $W \subset \UU^>_{R}$ the $\CC$-subalgebra generated by $\{\mathbf{1}^{\ss}_{1,d}; d \in \Z\}$.

\vspace{.1in}

\begin{lem}\label{L:proof1} 
(a) There exists $n \in \N$ such that for any $\underline{d}=(d_1, \ldots, d_r) \in \Z^r$ we have
$$\Theta_{R}^{-1}(\mathbf{1}^{\ss}_{1,d_1} \cdots \mathbf{1}^{\ss}_{1,d_r}) \in R_{[-n,n]}[z_1^{\pm 1}, \ldots, z_r^{\pm 1}].$$

(b) There exists $n' \in \N$ such that for 
$r' \leqslant r$ and $d' \in \Z$ we have 
$$\mathbf{1}^{\ss}_{r',d'} \in R_{-[n',n']}W.$$
\end{lem}

\vspace{.1in}

\noindent
By Lemma~\ref{L:proof1}$(b)$ we have, for any HN type 
$\underline{\a}=(\a_1, \ldots, \a_s)$ of weight $(r,d)$
$$\mathbf{1}^{\ss}_{\a_1} \cdots \mathbf{1}^{\ss}_{\a_s} \in R_{[-sn',sn']}W$$
and using Lemma~\ref{L:proof1}$(a)$ we get 
$$\Theta_R^{-1}(\mathbf{1}^{\ss}_{\a_1} \cdots \mathbf{1}^{\ss}_{\a_s}) 
\in R_{-[sn'-n,sn'+n]}[z_1^{\pm 1}, \ldots, z_r^{\pm 1}].$$
From (\ref{E:hopla}) and from the fact that 
$\sum_{i<j} \langle \a_i, \a_j \rangle \to \infty$ as the HN type 
$\underline{\a}$ goes to infinity, we deduce that the sequence 
$\Theta_{R}^{-1}(\mathbf{1}^Z_{r,d})$ indeed converges in 
$R[z_1^{\pm 1}, \ldots, z_r^{\pm 1}]$ as $Z$ tends to $Y_{r,d}$. 
One shows, by the same calculation as in (\ref{E:consterm2}) that the limit 
is equal to zero. We leave the details to the reader. \end{proof}

\vspace{.2in}
 
\addtocounter{theo}{1}

\paragraph{\textbf{Remark \thetheo}} Proposition~\ref{P:buntriv} says that in 
any lift of the isomorphism $\Theta_R$ to an equivalence of triangulated 
categories, the constant sheaf $\overline{\mathbb{Q}_l}_{{Bun}_{GL_r}X}$ 
would be mapped to a complex of coherent sheaves on the stack 
${Loc}_{GL_r}X$ whose class in the Grothendieck group is zero. As explained 
to us by V. Lafforgue, this is indeed expected of the geometric Langlands 
correspondence~: the
constant sheaf $\overline{\mathbb{Q}_l}_{{Bun}_{GL_r}X}$ should be mapped 
to some unbounded acyclic complex in $D(Coh({Loc}_{GL_r}X))$.

\vspace{.2in}

\paragraph{\textbf{3.5.}} Let us now fix some $r \geqslant 1$ and describe 
the image under our correspondence of the class 
$[\mathcal{O}_{\text{triv}_r}]=[\mathcal{O}_{\{0\}}]$
in $K^{T_s \times GL_r}(C_{\mathfrak{gl}_r})$. Write 
$x\mathfrak{g}=\sum x_l \mathfrak{g}$ and $y\mathfrak{g}=\sum y_l \mathfrak{g}$.
Let $i: \{0\} \to {C}_{\mathfrak{gl}_r}$ be the inclusion.
We have
\begin{equation*}
Li^* ([\mathcal{O}_{\{0\}}])=
\Lambda_{-1}(x\mathfrak{g} + y\mathfrak{g})=\prod_{i,j=1}^r \kappa(z_i/z_j)
=\prod_{l=1}^g (1-x_l)^r(1-y_l)^r \cdot \prod_{i\neq j} \kappa(z_i/z_j)
\end{equation*}
where 
$\kappa(z)=\prod_{l=1}^g (1-x_lz)(1-y_lz)$. Next, recall the weighted 
symmetrization map $\Psi_r$ used in the definition of 
$\mathbf{A}_{\tilde{\zeta}(z)}$ in Section 1.10.
Here $\tilde{\zeta}(z)$ is given by (\ref{E:deftzetax(z)}). 
A direct computation yields 
\begin{equation}\label{E:triv1}
\begin{split}
\Psi_r\bigg(z_1^{r-1}z_2^{r-3} \cdots z_r^{1-r}& \cdot 
\prod_{i<j}\prod_{l=1}^g (1-x_l^{-1}z_j/z_i)(1-y_l^{-1}z_j/z_i)\bigg)=\\
&=(-1)^{r(r-1)/2}[r]! \prod_{i \neq j} \prod_{l=1}^g (1-x_l^{-1}z_i/z_j)(1-y_l^{-1}z_i/z_j)\\
&=(-1)^{r(r-1)/2}[r]! q^{gr(r-1)/2}\prod_{i \neq j} \kappa(z_i/z_j)
\end{split}
\end{equation} 
which, up to a non-zero factor in $K$, 
is equal to $Li^*([\mathcal{O}_{\{0\}}])$. 
Here we have set 
$$[s]=1+ q+ \cdots + q^{s-1},\quad [s]!=[1] [2] \cdots [s].$$
This shows in particular that $[\mathcal{O}_{\{0\}}]$ belongs to the subalgebra
$\bar\Cb_{K}$ of 
$K^{T_s \times GL_r}({C}_{\mathfrak{gl}_r})\otimes_R K$,
and yields an
expression for $\Theta_K([\mathcal{O}_{\{0\}}])$. In order to write this 
expression in a nice way, we introduce the following notation. 
Define a ${K}$-linear map
\begin{equation*}
Ind~: {K}[z_1^{\pm 1}, \ldots, z_r^{\pm 1}] \to \dot{\UU}^>_{K},\quad 
z_1^{d_1} \cdots z_r^{d_r} \mapsto \mathbf{1}^{\ss}_{1,d_1} \cdots 
\mathbf{1}^{\ss}_{d_r}.
\end{equation*}
If $\sigma=(\sigma_1, \ldots, \sigma_r) \in \Z^r$ 
we write $z^\sigma=z_1^{\sigma_1} \cdots z_r^{\sigma_r}$.  
We have the following formula. 

\vspace{.1in}

\begin{prop}\label{P:cohtriv} For $r\geqslant 1$ we have
\begin{equation}\label{E:cohtriv}
\Theta_K([\mathcal{O}_{\mathrm{triv}_r}])=(-1)^{r(r-1)/2}
\frac{q^{-gr(r+1)/2}}{[r]!} Ind \bigg(z^{-2\rho} 
\prod_{\substack{\sigma \in \Delta^+ }}
\prod_{l=1}^g (1-\a_l z^\sigma)(1-\bar\a_l z^{\sigma})\bigg)
\end{equation}
where $\Delta^+ \subset \Z^r$ is the set of positive roots of $\mathfrak{gl}_r$.
\end{prop}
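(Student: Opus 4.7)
The plan is to compute $\Theta_K([\mathcal{O}_{\text{triv}_r}])$ by unwinding the explicit description $\Theta_K=\dot\Upsilon^{-1}\circ Id'\circ\Phi$ from the proof of Theorem~\ref{thm:Main}. My starting point is the identification $\Phi([\mathcal{O}_{\{0\}}])=Li^*([\mathcal{O}_{\{0\}}])$ of Theorem~\ref{T:Isoktheory}, evaluated via the Koszul resolution of the origin inside the smooth ambient variety $\gen^g\times\gen^g$; this is precisely the formula $\prod_l(1-x_l)^r(1-y_l)^r\prod_{i\neq j}\kappa(z_i/z_j)$ recorded just before (\ref{E:triv1}).

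The crucial step is the identity (\ref{E:triv1}), which recognises this symmetric rational function as $\Psi_r(P_0)$ for the explicit polynomial $P_0=z^{2\rho}\prod_{i<j}\prod_l(1-x_l^{-1}z_j/z_i)(1-y_l^{-1}z_j/z_i)$, up to an explicit scalar in $K$. I would verify (\ref{E:triv1}) as follows. Upon multiplying $P_0$ by $g(z_1,\ldots,z_r)=\prod_{i<j}\tilde{\zeta}(z_i/z_j)$, the factors $(1-x_l^{-1}z_j/z_i)$ and $(1-x_l^{-1}z_i/z_j)$ coming from $P_0$ and $\tilde{\zeta}$ respectively combine into a completely symmetric product ranging over $i\neq j$, which pulls out of $\text{Sym}_r$ and is related to $\prod_{i\neq j}\kappa(z_i/z_j)$ by elementary manipulations using the relation $x_ly_l=p$. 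What remains inside the symmetrization is the scalar function $z^{2\rho}\prod_{i<j}(1-qz_j/z_i)/(1-z_i/z_j)$, whose symmetrization equals $(-1)^{r(r-1)/2}[r]!$ by the classical Hall--Littlewood identity $\sum_{\sigma\in\mathfrak{S}_r}\sigma\bigl(\prod_{i<j}(z_i-qz_j)/(z_i-z_j)\bigr)=[r]!$.

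Next I would apply $Id'$, which multiplies by $1/\prod_l(1-x_l)^r(1-y_l)^r$ and thus exactly cancels the outer factor in $\Phi([\mathcal{O}_{\{0\}}])$; what remains is a pure scalar multiple of $\Psi_r(P_0)$, now interpreted inside $\Ab_{K,\tilde{\zeta}(z^{-1})}$. To apply $\dot\Upsilon^{-1}$ I exploit two observations. First, the two weighted symmetrizations for $\tilde{\zeta}(z)$ and $\tilde{\zeta}(z^{-1})$ are related by variable reversal: since $\text{Sym}_r$ is invariant under the relabelling $(z_1,\ldots,z_r)\mapsto(z_r,\ldots,z_1)$, a direct check gives $\Psi_r^{\tilde{\zeta}(z)}(P)=\Psi_r^{\tilde{\zeta}(z^{-1})}(w_0P)$ where $w_0$ denotes this reversal. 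Second, by the very definition of the shuffle product in $\Ab_{\tilde{\zeta}(z^{-1})}$, one has $x_1^{m_1}\star\cdots\star x_1^{m_r}=\Psi_r^{\tilde{\zeta}(z^{-1})}(z_1^{m_1}\cdots z_r^{m_r})$, and $\dot\Upsilon^{-1}$ carries such a shuffle product of degree-one generators to the $\circ$-product $\mathbf{1}^{\ss}_{1,m_1}\circ\cdots\circ\mathbf{1}^{\ss}_{1,m_r}=Ind(z_1^{m_1}\cdots z_r^{m_r})$.

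The final step is to compute $w_0P_0$. One has $w_0z^{2\rho}=z^{-2\rho}$, and the product $\prod_{i<j}\prod_l(1-x_l^{-1}z_j/z_i)(1-y_l^{-1}z_j/z_i)$ transforms into $\prod_{i<j}\prod_l(1-\alpha_lz_i/z_j)(1-\bar\alpha_lz_i/z_j)=\prod_{\sigma\in\Delta^+}\prod_l(1-\alpha_lz^\sigma)(1-\bar\alpha_lz^\sigma)$, which is exactly the argument of $Ind$ in (\ref{E:cohtriv}). The main obstacle is the careful bookkeeping of the scalar prefactors: one must verify that the factors produced by (\ref{E:triv1}), by the cancellation of $\prod_l(1-x_l)^r(1-y_l)^r$ against $Id'$, and by the normalisation $q^{rg}/\#Pic^0(X)^r$ encoded in $Id'$ all combine to give the single rational number $(-1)^{r(r-1)/2}q^{-gr(r+1)/2}/[r]!$ appearing in the proposition.
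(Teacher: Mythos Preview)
Your approach is essentially the same as the paper's: it too rests on identity (\ref{E:triv1}) and Theorem~\ref{thm:Main}, and the paper's proof is literally the single sentence ``This is a direct consequence of (\ref{E:triv1}) and Theorem~\ref{thm:Main}.'' You have simply unpacked what that consequence involves---verifying (\ref{E:triv1}) via the Hall--Littlewood identity, then tracking the element through $Id'$ and $\dot\Upsilon^{-1}$ using the observation $\Psi_r^{\tilde\zeta(z)}(P)=\Psi_r^{\tilde\zeta(z^{-1})}(w_0P)$---and correctly flagged the scalar bookkeeping as the only remaining chore.
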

\begin{proof} This is a direct consequence of (\ref{E:triv1}) 
and Theorem~\ref{thm:Main}.
\end{proof}

\vspace{.2in}
 
\addtocounter{theo}{1}

\paragraph{\textbf{Remark \thetheo}} Proposition~\ref{P:cohtriv} is stated in 
terms of the multiplication in the twisted spherical Hall algebra 
$\dot{\UU}^>_{K}$. If instead one uses the usual spherical Hall algebra 
$\UU^>_{K}$ then the expression for $\Theta_K([\mathcal{O}_{\mathrm{triv}_r}])$
is a little bit more symmetric
\begin{equation}\label{E:cohtriv2}
\Theta_K([\mathcal{O}_{\mathrm{triv}_r}])=(-1)^{r(r-1)/2}
\frac{q^{-gr(r+1)/2}}{[r]!} Ind \bigg(z^{-2g\rho} 
\prod_{\substack{\sigma \in \Delta^+ }}
\prod_{l=1}^g (1-\a_l z^\sigma)(1-\bar\a_l z^{\sigma})\bigg).
\end{equation}

\vspace{.2in}
 
\addtocounter{theo}{1}

\paragraph{\textbf{Example \thetheo}} Let us assume that $X$ is an elliptic 
curve and that $r=2$. A direct computation shows that, up to a global factor, 
the function
$\Theta_K([\mathcal{O}_{\text{triv}_2}])$ takes the following non-zero values on the 
closed points of ${Bun}_{2,0}X$
$$
\Theta_K([\mathcal{O}_{\text{triv}_2}]) (\mathcal{L}_0 \oplus \mathcal{L}_0')= 
(1+q)(\#X(\fq)-2)$$
if $\mathcal{L}_0, \mathcal{L}_0' \in Pic^0X, \;\mathcal{L}_0 \neq 
\mathcal{L}_0'$, 
$$\Theta_K([\mathcal{O}_{\text{triv}_2}]) (\mathcal{L}_0^{(2)})= 
(1+q)(\#X(\fq)-1)$$
if $\mathcal{L}^{(2)}_0$ is the (unique) indecomposable self-extension of 
some $\mathcal{L}_0 \in Pic^0X$,
$$\Theta_K([\mathcal{O}_{\text{triv}_2}]) (\mathcal{V})= (1+q)\#X(\fq)$$
if $\mathcal{V}$ is a rank two stable bundle, and finally
$$\Theta_K([\mathcal{O}_{\text{triv}_2}]) 
(\mathcal{L}_{-1} \oplus \mathcal{L}_1)= q$$
for $\mathcal{L}_{-1} \in Pic^{-1}X,$ $ \mathcal{L}_1 \in Pic^1X$.
Thus $\Theta_K([\mathcal{O}_{\text{triv}_2}])$ is supported on 
the union of the two Harder-Narasimhan stratas $S_{2,0}$ and 
$S_{(1,-1), (1,1)}$. 
It is easy to see, using e.g., (\ref{E:cohtriv2}), that for a curve $X$ of genus
$g$ the function $\Theta_K([\mathcal{O}_{\text{triv}_2}])$ is supported on the 
union of Harder-Narasimhan stratas 
$$S_{(2,0)} \cup S_{(1,-1),(1,1)} \cup \cdots \cup S_{(1,-g),(1,g)}. $$
More generally, if $\mathfrak{n}_+ \subset \mathfrak{gl}_r$ is 
the positive nilpotent subalgebra and if $\Phi$ is the set of 
weights occuring in $\Lambda^\bullet (H^1(X,\qlb) \otimes \mathfrak{n}_+)$ then
$\Theta_K([\mathcal{O}_{\text{triv}_r}])$ is supported on the union of 
Harder-Narasimhan strata whose type belongs to the convex hull of 
$\{\alpha-2g\rho; \alpha \in \Phi\}$.

\vspace{.2in}

\centerline{\textbf{Acknowledgements.}}

\vspace{.1in}

We would like to thank V. Lafforgue for useful conversations.

\vspace{.2in}

\centerline{APPENDICES}

\appendix

\vspace{.2in}

\section{The principal Hall algebra}

\vspace{.15in}

To our knowledge, the Hall algebra $\H_\E$ cannot be regarded as a shuffle algebra.
However, there is a subalgebra  of $\H_\E$ which strictly contains ${\UU}_{\E}$ and which admits a similar
description. It is the principal Hall algebra that we describe now.

\vspace{.15in}

\paragraph{\textbf{A.1.}} We define the \textit{principal Hall algebra} 
$\H^{pr}_\E$ of $X$ as the subalgebra of $\H_{\E}$ which is generated by 
$\H_{\E}[0]$ and $\H_{\E}[1]$, i.e., as the subalgebra of $\H_{\E}$ generated 
by all the characteristic functions $1_{\mathcal{F}}$ for $\mathcal{F}$ a 
torsion sheaf or a line bundle. We have 
$$\UU_{\E} \subset \H^{pr}_{\E} \subset \H_{\E},$$ and unless 
$\E \simeq \mathbb{P}^1$ the inclusions are strict. Let $\H^{>,pr}_{\E}$ be 
the subalgebra generated by the functions $1_{\mathcal{L}}$ for 
$\mathcal{L} \in Pic(\E)$.
As for the spherical Hall algebra, we have 
$$\H^{pr}_{\E} \simeq \H^{>,pr}_{\E} \otimes \H_{\E}[0].$$ The aim of this 
section is to give a realization of $\H^{>,pr}_{\E}$ as a shuffle algebra.

\vspace{.2in}

\paragraph{\textbf{A.2.}} Let $\widehat{Pic(\E)}$ be the group of all complex 
characters $\chi~: Pic(\E) \to \CC^\times$. 
The group $\widehat{Pic(\E)}$ fits in a 
natural sequence
$$\xymatrix{ 1 \ar[r] & \CC^\times\ar[r]^-{\iota} & 
\widehat{Pic(\E)} \ar[r]^-{r} 
& \widehat{Pic^0(\E)} \ar[r] & 1}$$
with 
\begin{equation}\label{iota}\iota: \CC^\times\to \widehat{Pic(\E)}, \quad
z \mapsto (\mathcal{L} \mapsto z^{deg(\mathcal{L})}),\end{equation} and with 
$r:\widehat{Pic(\E)} \to \widehat{Pic^0(\E)}$ being the restriction. 
We will write $\rho \sim \chi$ for two characters satisfying
$r(\rho)=r(\chi)$.
For $d \in \Z$ and $\chi : Pic(\E) \to \CC^\times$ we set
$$\mathbf{1}^{\chi}_{1,d}=\sum_{\mathcal{L} \in Pic^d(\E)} 
\chi(\mathcal{L}) 1_{\mathcal{L}}.$$
When $\chi=1$, the trivial character, we recover the function 
$\mathbf{1}^{\ss}_{1,d}$ introduced in Section~1.6.
We need to introduce certain elements of $\H_{Tor(\E)}$. The determinant 
$Vec(\E) \to Pic(\E)$ factors to a morphism of abelian groups 
$K_0(\E) \to Pic(\E)$. This allows to make sense of the determinant 
$det(\mathcal{T}) \in Pic^d(\E)$ of a torsion sheaf $\mathcal{T}$ of degree $d$.
Recall from Section~1.3 the elements $\mathbf{1}_{0,l}, T_{0,l}$ and 
$\theta_{0,l}$ in $H_{Tor(\E)}$. For $\chi \in \widehat{Pic(\E)}$ and 
$l \geqslant 0$ we set
$$\gathered
\mathbf{1}^{\chi}_{0,d}=\sum_{\substack{\mathcal{T} \in Tor(\E)\\ 
deg(\mathcal{T})=d}} \chi(det(\mathcal{T})) 1_{\mathcal{T}},\cr
T^{\chi}_{0,d} =\sum_{\mathcal{T} \in Tor(\E)} \chi(det(\mathcal{T})) 
T_{0,d}(\mathcal{T}) 1_{\mathcal{T}},\cr
\theta^{\chi}_{0,l}=\sum_{\mathcal{T} \in Tor(\E)} 
\chi(det(\mathcal{T})) \theta_{0,l}(\mathcal{T}) 1_{\mathcal{T}}.
\endgathered$$
Using the additivity of the determinant one easily checks that as before
\begin{equation}\label{E:thetaprin}
1+ \sum_{d \geqslant 1} \mathbf{1}^{\chi}_{0,d} s^d=\exp \bigg( \sum_{d}
\frac{T^{\chi}_{0,d}}{[d]}s^d\bigg),\quad 1+ \sum_{d \geqslant 1} 
\theta^{\chi}_{0,d}
s^d=\exp \bigg( (v^{-1}-v)\sum_{d} T^{\chi}_{0,d}s^d\bigg).
\end{equation}

\vspace{.2in}

\paragraph{\textbf{A.3.}} The elements 
$\mathbf{1}^{\chi}_{1,n}, \theta^{\chi}_{0,d}, etc,$ introduced above satisfy 
properties very similar to those of the elements 
$\mathbf{1}^{\ss}_{1,n}, \theta_{0,d}, etc.$ 
We summarize these properties in the next few lemmas.

\vspace{.1in}

\begin{lem}\label{L:printheta} The following holds 
\begin{enumerate}
\item[$(a)$] $\widetilde{\Delta}(\theta^{\chi}_{0,d}) =\sum_{l=0}^d \theta_{0,l}^{\chi} \boldsymbol{\kappa}_{0,d-l} \otimes \theta_{0,d-l}^{\chi}$,
\item[$(b)$] $\widetilde{\Delta}(T^{\chi}_{0,d})=
T^{\chi}_{0,d} \otimes 1 + \boldsymbol{\kappa}_{0,d} \otimes T^{\chi}_{0,d}.$
\end{enumerate}
\end{lem}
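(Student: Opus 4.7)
The plan is to mimic the argument for Lemma \ref{L:Hall1} using a single observation: the determinant is additive on short exact sequences of torsion sheaves, so if $0\to\mathcal{M}\to\mathcal{X}\to\mathcal{N}\to 0$ is exact, then $\chi(\det(\mathcal{X}))=\chi(\det(\mathcal{M}))\,\chi(\det(\mathcal{N}))$. This implies that the $\CC$-linear map $\tau_{\chi}:\H_{Tor(\E)}\to\H_{Tor(\E)}$ defined by $1_{\mathcal{T}}\mapsto\chi(\det(\mathcal{T}))\,1_{\mathcal{T}}$ is an algebra homomorphism. Applying $\tau_{\chi}$ to the defining relation of $\theta_{0,d}$ in terms of $T_{0,d}$ (Section~1.4) gives the exponential relation (\ref{E:thetaprin}) for $\theta^{\chi}_{0,d}$ and $T^{\chi}_{0,d}$ — so part of the assertion of the lemma is in fact a definition that needs to be justified by this observation.

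For part $(b)$, I would compute $\Delta(T^{\chi}_{0,d})(\mathcal{M},\mathcal{N})$ directly from the coproduct formula of Section~1.2. Since only torsion extensions $\mathcal{X}_{\xi}$ of $\mathcal{N}$ by $\mathcal{M}$ contribute, additivity of $\det$ lets me pull $\chi(\det(\mathcal{M}))\chi(\det(\mathcal{N}))$ outside the sum over $\xi$, yielding
$$\Delta(T^{\chi}_{0,d})(\mathcal{M},\mathcal{N})=\chi(\det(\mathcal{M}))\,\chi(\det(\mathcal{N}))\,\Delta(T_{0,d})(\mathcal{M},\mathcal{N}).$$
By Lemma~\ref{L:Hall1}$(a)$, $\Delta(T_{0,d})(\mathcal{M},\mathcal{N})$ is supported on pairs with $\mathcal{M}=0$ or $\mathcal{N}=0$, on which $\chi(\det(0))=\chi(\mathcal{O})=1$. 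Reassembling via the definition of $\widetilde{\Delta}$ gives $\widetilde{\Delta}(T^{\chi}_{0,d})=T^{\chi}_{0,d}\otimes 1+\boldsymbol{\kappa}_{0,d}\otimes T^{\chi}_{0,d}$.

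For part $(a)$, I would deduce it from $(b)$ by applying $\widetilde{\Delta}$ to the generating-series identity $1+\sum_{d\geqslant 1}\theta^{\chi}_{0,d}\,s^d=\exp((v^{-1}-v)\sum_d T^{\chi}_{0,d}s^d)$. Since $\widetilde{\Delta}$ is a morphism of algebras and the summands $T^{\chi}_{0,d}\otimes 1$ and $\boldsymbol{\kappa}_{0,d}\otimes T^{\chi}_{0,d}$ commute in $\widetilde\H_{\E}\widehat\otimes\widetilde\H_{\E}$ (they live in different tensor factors once one notes that $\boldsymbol{\kappa}_{0,d}\otimes 1$ is central and commutes with $1\otimes T^{\chi}_{0,d}$), the exponential factors as a product of two exponentials. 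The first factor is $(1+\sum_l\theta^{\chi}_{0,l}s^l)\otimes 1$, and the second, after collecting $\boldsymbol{\kappa}_{0,d}=\boldsymbol{\kappa}_{0,1}^d$ into a formal rescaling $s\mapsto\boldsymbol{\kappa}_{0,1}s$ in the right tensor factor, becomes $\sum_l\boldsymbol{\kappa}_{0,l}\otimes\theta^{\chi}_{0,l}\,s^l$. Multiplying the two and extracting the coefficient of $s^d$ yields exactly the formula in $(a)$.

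The only subtlety I foresee is keeping track of the completion: the series $\sum_d\boldsymbol{\kappa}_{0,d}\otimes T^{\chi}_{0,d}s^d$ and its exponential must be interpreted in $\widetilde{\H}_\E\widehat\otimes\widetilde\H_\E[[s]]$, and the "substitution" $s\mapsto\boldsymbol{\kappa}_{0,1}s$ in the right tensor factor has to be made rigorous by simply comparing coefficients of $s^d$ on both sides. Beyond that, all the work is done by the multiplicativity of $\chi\circ\det$ on extensions plus Lemma~\ref{L:Hall1}.
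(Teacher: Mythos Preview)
Your proof is correct and rests on the same key observation as the paper's: the determinant is additive on short exact sequences of torsion sheaves, so $\chi\circ\det$ is multiplicative on extensions. The paper organizes the argument slightly differently. It proves (a) directly by decomposing $\theta^{\chi}_{0,d}$ into its fixed-determinant components $\theta^{\chi,\mathcal{K}}_{0,d}$, applying Lemma~\ref{L:Hall1}(b) to each piece, and then summing over $\mathcal{K}$; part (b) is declared ``entirely similar''. Your route---proving (b) first via the coproduct formula and then deducing (a) from the exponential identity (\ref{E:thetaprin})---is an equally valid packaging of the same idea. In fact your observation about $\tau_\chi$ can be sharpened: the very computation you do for (b) shows that $(\tau_\chi\otimes\tau_\chi)\circ\Delta=\Delta\circ\tau_\chi$ on $\H_{Tor(\E)}$, so both (a) and (b) follow immediately by applying $\tau_\chi$ to the two parts of Lemma~\ref{L:Hall1}, making the detour through the exponential unnecessary.
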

\begin{proof}
We prove $(a)$. From Lemma~\ref{L:Hall1} ($b$) we get 
$$\widetilde{\Delta}(\theta^{\mathcal{K}}_{0,d})=\sum_{l=0}^d \sum_{\mathcal{L}, \mathcal{L}'}\theta^{1, \mathcal{L}}_{0,l} \boldsymbol{\kappa}_{0,d-l} \otimes \theta^{1, \mathcal{L}'}_{0,d-l}$$
where the sum ranges over the pairs $\mathcal{L} \in Pic^d(\E), \mathcal{L}' \in Pic^{d-l}(\E)$ satisfying $\mathcal{L} + \mathcal{L}'=\mathcal{K}$. Thus
\begin{equation*}
\begin{split}
\widetilde{\Delta}(\theta^{\chi,\mathcal{K}}_{0,d})=\chi(\mathcal{K})\widetilde{\Delta}(\theta^{\chi,\mathcal{K}}_{0,d})&=
\sum_{l=0}^d \sum_{\mathcal{L}, \mathcal{L}'}\chi(\mathcal{L})\theta^{1, \mathcal{L}}_{0,l} \boldsymbol{\kappa}_{0,d-l} \otimes \chi(\mathcal{L}')\theta^{1, \mathcal{L}'}_{0,d-l}\\
&=\sum_{l=0}^d \sum_{\mathcal{L}, \mathcal{L}'}\theta^{\chi, \mathcal{L}}_{0,l} \boldsymbol{\kappa}_{0,d-l} \otimes \theta^{\chi, \mathcal{L}'}_{0,d-l}.
\end{split}
\end{equation*}
Summing over all $\mathcal{K}$ yields $(a)$. The proof of ($b$) is entirely similar.
\end{proof}

\vspace{.1in}

\begin{lem}\label{L:printor}
For any $\chi \in \widehat{Pic(\E)}$ we have
\begin{equation}\label{E:princop}
\widetilde{\Delta}(\mathbf{1}^{\chi}_{1,d})=
\mathbf{1}^{\chi}_{1,d} \otimes 1 + 
\sum_{l \geqslant 0} \theta^{\chi}_{0,l}\boldsymbol{\kappa}_{1,d-l} 
\otimes \mathbf{1}^{\chi}_{1,d-l}.
\end{equation}
\end{lem}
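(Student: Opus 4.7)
My plan is to derive~(\ref{E:princop}) from the spherical coproduct formula~(\ref{E:Hall1}) by applying a \emph{character twist} automorphism of $\widetilde{\H}_\E$.

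First I would introduce, for each $\chi \in \widehat{Pic(\E)}$, the $\CC$-linear map $\tau_\chi\colon \widetilde{\H}_\E \to \widetilde{\H}_\E$ defined by
$$\tau_\chi(1_\mathcal{F}\,\boldsymbol{\kappa}_{r,n}) \;=\; \chi(\det\mathcal{F})\,1_\mathcal{F}\,\boldsymbol{\kappa}_{r,n},$$
where $\det\colon K_0(\E) \to Pic(\E)$ is the determinant morphism. The key technical step is to verify that $\tau_\chi$ is an automorphism of the topological bialgebra $(\widetilde{\H}_\E,\cdot,\widetilde{\Delta})$. Both multiplicativity and compatibility with the coproduct reduce to additivity of $\det$ in short exact sequences: if $0\to\mathcal{M}\to\mathcal{R}\to\mathcal{N}\to 0$, then $\det\mathcal{R}=\det\mathcal{M}\otimes\det\mathcal{N}$, so $\chi(\det\mathcal{R})=\chi(\det\mathcal{M})\chi(\det\mathcal{N})$. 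Plugged into the sum over subsheaves $\mathcal{N}\subset\mathcal{R}$ this yields $\tau_\chi(f\cdot g)=\tau_\chi f\cdot\tau_\chi g$. For the coproduct, the crucial observation is that all extensions $\mathcal{X}_\xi$ classified by $\mathrm{Ext}^1(\mathcal{M},\mathcal{N})$ share the same determinant $\det\mathcal{M}\otimes\det\mathcal{N}$, so the factor $\chi(\det\mathcal{X}_\xi)$ is constant in $\xi$ and factors out of the sum defining $\Delta(\tau_\chi f)(\mathcal{M},\mathcal{N})$. The commutation relation $\boldsymbol{\kappa}_{r,n}\,1_\mathcal{M}\,\boldsymbol{\kappa}_{r,n}^{-1}=v^{-2r(1-g)r_\mathcal{M}}\,1_\mathcal{M}$ is preserved trivially since $\tau_\chi$ acts on $1_\mathcal{M}$ by a scalar and fixes $\boldsymbol{\mathcal{K}}$ pointwise.

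Next I would identify elements under $\tau_\chi$. Directly from the definitions in Section~A.2,
$$\mathbf{1}^\chi_{1,d} = \tau_\chi(\mathbf{1}^{\ss}_{1,d}),\qquad \theta^\chi_{0,l} = \tau_\chi(\theta_{0,l}),\qquad \tau_\chi(\boldsymbol{\kappa}_{1,d-l}) = \boldsymbol{\kappa}_{1,d-l},$$
the first because line bundles are self-determining, the second by expanding $\theta_{0,l}=\sum_\mathcal{T}\theta_{0,l}(\mathcal{T})1_\mathcal{T}$ in the basis $\{1_\mathcal{T}\}$ (or alternatively by applying the algebra morphism $\tau_\chi$ to the defining exponential relations~(\ref{E:thetaprin})). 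Applying $\tau_\chi\otimes\tau_\chi$ to~(\ref{E:Hall1}) and invoking the intertwining property $\widetilde{\Delta}\circ\tau_\chi=(\tau_\chi\otimes\tau_\chi)\circ\widetilde{\Delta}$ then yields~(\ref{E:princop}) termwise.

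I do not anticipate a serious obstacle. The only mildly delicate point is checking that $\tau_\chi$ commutes with $\widetilde{\Delta}$: one must track the prefactor $v^{\langle\mathcal{M},\mathcal{N}\rangle}/|\mathrm{Ext}^1(\mathcal{M},\mathcal{N})|$ and the $\boldsymbol{\kappa}$-twist built into $\widetilde{\Delta}$, but since both depend only on the numerical classes of $\mathcal{M}$ and $\mathcal{N}$ and not on the specific extension class, they pass through the character scalar unchanged. A more pedestrian alternative would be to expand $\mathbf{1}^\chi_{1,d}=\sum_{\mathcal{L}\in Pic^d(\E)}\chi(\mathcal{L})1_\mathcal{L}$ and compute each $\widetilde{\Delta}(1_\mathcal{L})$ by classifying subsheaves of $\mathcal{L}$ (either $0$ or a line bundle), then reorganize using $\chi(\mathcal{L})=\chi(\mathcal{M})\chi(\det\mathcal{N})$ whenever $0\to\mathcal{M}\to\mathcal{L}\to\mathcal{N}\to 0$; but the twist argument is conceptually cleaner and makes transparent why the twisted coproduct formula has exactly the same shape as the spherical one.
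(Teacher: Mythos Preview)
Your proof is correct and takes a genuinely different route from the paper's. The paper does precisely what you call the ``pedestrian alternative'': it first writes down $\widetilde{\Delta}(1_{\mathcal{L}})$ for a single line bundle $\mathcal{L}$ (deduced from~(\ref{E:Hall1}) by projecting onto fixed determinants, which introduces the elements $\theta^{1,\mathcal{K}}_{0,l}$), then sums over $\mathcal{L}\in Pic^d(\E)$ with weight $\chi(\mathcal{L})$, and reorganizes using $\chi(\mathcal{L})=\chi(\mathcal{L}-\mathcal{L}')\chi(\mathcal{L}')$. Your approach via the character-twist bialgebra automorphism $\tau_\chi$ is more conceptual: once $\widetilde{\Delta}\circ\tau_\chi=(\tau_\chi\otimes\tau_\chi)\circ\widetilde{\Delta}$ is established, the lemma is a one-line consequence of~(\ref{E:Hall1}), and the same argument immediately gives Lemma~\ref{L:printheta} as well by applying $\tau_\chi$ to Lemma~\ref{L:Hall1}. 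The paper's computation has the advantage of being entirely self-contained and avoids introducing an auxiliary structure, while your argument explains structurally why every spherical coproduct identity has a $\chi$-twisted analogue of identical shape.
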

\begin{proof} For any $\mathcal{K} \in Pic^l(\E)$ let us denote by 
$\theta_{0,l}^{\chi, \mathcal{K}}$ the projection of $\theta_{0,l}^{\chi}$ 
to the subset of torsion sheaves of determinant $\mathcal{K}$. 
From (\ref{E:Hall1}) we immediately deduce, for any fixed line bundle 
$\mathcal{L} \in Pic^d(\E)$
$$\widetilde{\Delta}(1_{\mathcal{L}})=1_{\mathcal{L}} \otimes 1 + 
\sum_{l \geqslant 0} \theta^{1,\mathcal{L}-\mathcal{L}'}_{0,l}
\boldsymbol{\kappa}_{1,d-l} \otimes 1_{\mathcal{L}'}.$$
Summing over $\mathcal{L}$ this yields
\begin{equation*}
\begin{split}
\widetilde{\Delta}(\mathbf{1}^{\chi}_{1,d})&=\mathbf{1}^{\chi}_{1,d} \otimes 
1 + \sum_{l \geqslant 0} \sum_{\substack{\mathcal{L} \in Pic^d(\E) \\ 
\mathcal{L}' \in Pic^{d-l}(\E)}}\chi(\mathcal{L})\theta^{1,\mathcal{L}-
\mathcal{L}'}_{0,l}\boldsymbol{\kappa}_{1,d-l} \otimes 1_{\mathcal{L}'}\\
&=\mathbf{1}^{\chi}_{1,d} \otimes 1 + \sum_{l \geqslant 0} 
\sum_{\substack{\mathcal{L} \in Pic^d(\E) \\ \mathcal{L}' \in 
Pic^{d-l}(\E)}}\chi(\mathcal{L}-\mathcal{L}')\chi(\mathcal{L}')
\theta^{1,\mathcal{L}-\mathcal{L}'}_{0,l}\boldsymbol{\kappa}_{1,d-l} \otimes 
1_{\mathcal{L}'}\\
&=\mathbf{1}^{\chi}_{1,d} \otimes 1 + \sum_{l \geqslant 0} 
\sum_{\substack{\mathcal{L} \in Pic^d(\E) \\ \mathcal{L}' \in Pic^{d-l}(\E)}}
\theta^{\chi,\mathcal{L}-\mathcal{L}'}_{0,l}\boldsymbol{\kappa}_{1,d-l} \otimes 
\chi(\mathcal{L}')1_{\mathcal{L}'}\\
&=\mathbf{1}^{\chi}_{1,d} \otimes 1 + 
\sum_{l \geqslant 0} \theta^{\chi}_{0,l}\boldsymbol{\kappa}_{1,d-l} 
\otimes \mathbf{1}^{\chi}_{1,d-l}.
\end{split}
\end{equation*}
\end{proof}

\vspace{.1in}

\noindent Consider the zeta function of $\chi$ defined as
$$\zeta^{\chi}_{\E}(z)=\exp \bigg(\sum_{d \geqslant 1} 
\sum_{\substack{l |d \\ x \in Y_l}} \chi(det(\mathcal{O}_x)) 
\frac{l}{d} z^d\bigg),$$
where $Y_{l}$ stands for the set of closed points of $\E$ of degree $l$. 
When $\chi=1$ we have of course $\zeta^{\chi}_{\E}=\zeta_{\E}$.
It is known that when $\chi \not\sim 1$ the function $\zeta^{\chi}_{\E}$ is a 
polynomial of degree $2g-2$.

\vspace{.1in}

\begin{lem}\label{L:prinscal} We have
\begin{enumerate}
\item[($a$)] $( \mathbf{1}^{\chi}_{1,n}, \mathbf{1}^{\rho}_{1,n})_G =0$ unless $\rho \sim \chi$, and $\langle \mathbf{1}^{\chi}_{1,n}, \mathbf{1}^{\chi}_{1,n}\rangle=\#Pic^0(\E)/(q-1)$.
\item[($b$)] $(T^{\chi}_{0,d}, T^{\rho}_{0,d})_G=
\frac{v^d[d]}{v^{-1}-v} \sum_{l |d}\sum_{x \in Y_d}\chi 
\overline{\rho}(det(\mathcal{O}_x)) \frac{l}{d}$.
\end{enumerate}
\end{lem}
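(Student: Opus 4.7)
The plan is to handle $(a)$ and $(b)$ separately, with $(a)$ reducing to character orthogonality on $Pic^0(\E)$ and $(b)$ exploiting the tensor product decomposition of $\H_{Tor(\E)}$ over closed points of $\E$ recalled in Section 1.4.

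For $(a)$, I will unfold the definition of Green's form to obtain
$$(\mathbf{1}^\chi_{1,n}, \mathbf{1}^\rho_{1,n})_G = \sum_{\mathcal{L}\in Pic^n(\E)} \frac{\chi(\mathcal{L})\overline{\rho(\mathcal{L})}}{\#\mathrm{Aut}(\mathcal{L})} = \frac{1}{q-1}\sum_{\mathcal{L} \in Pic^n(\E)} \chi\bar{\rho}(\mathcal{L}),$$
using that $\#\mathrm{Aut}(\mathcal{L}) = q-1$ for every line bundle. Fixing $\mathcal{L}_0 \in Pic^n(\E)$ and rewriting the sum via the torsor identification $Pic^n(\E) = \mathcal{L}_0 \otimes Pic^0(\E)$, the restriction $\chi\bar\rho|_{Pic^0(\E)}$ is a unitary character of the finite group $Pic^0(\E)$; standard orthogonality then forces vanishing unless $\chi \sim \rho$, while for $\chi = \rho$ the sum collapses to $\#Pic^0(\E)$, yielding the stated norm.

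For $(b)$, the crucial structural observation is that $\H_{Tor(\E)}$ decomposes as a topological Hopf algebra tensor product $\bigotimes_{x}\H_{Tor(\E)_x}$ and that Green's form factors through this decomposition, because torsion sheaves at distinct points have vanishing $\Hom$ and $\Ext^1$. Since $T_{0,d}$ is primitive by Lemma~\ref{L:Hall1}(a), a formal Hopf-algebra argument forces the support decomposition
$$T_{0,d} = \sum_{l\mid d}\sum_{x \in Y_l} t^{(d/l)}_x,\qquad t^{(m)}_x \in \H_{Tor(\E)_x},$$
with each $t^{(m)}_x$ primitive in the local factor and supported on torsion sheaves at $x$ of length $m$. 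Because all such sheaves share the determinant $\mathcal{O}(mx)$, twisting by $\chi$ simply multiplies each local summand by $\chi(\mathcal{O}(x))^{d/l}$, giving
$$T^\chi_{0,d} = \sum_{l\mid d}\sum_{x \in Y_l} \chi(\mathcal{O}(x))^{d/l}\, t^{(d/l)}_x.$$

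Plugging in and using orthogonality of the distinct-point local factors under Green's form will yield
$$(T^\chi_{0,d}, T^\rho_{0,d})_G = \sum_{l\mid d}\sum_{x\in Y_l}(\chi\bar\rho)(\mathcal{O}(x))^{d/l}\, A(l,d),$$
where $A(l,d) := (t^{(d/l)}_x, t^{(d/l)}_x)_G$ depends only on $l$ and $d$ since $\H_{Tor(\E)_x}$ depends only on the residue field $\mathbb{F}_{q^l}$. I will identify $A(l,d)$ by specializing to $\chi = \rho = 1$ and matching with Lemma~\ref{L:Hall1}(c), using $\#\E(\mathbb{F}_{q^d}) = \sum_{l\mid d} l|Y_l|$; this forces $A(l,d) = v^d l [d]/(d(v^{-1}-v))$ and produces the claim (where $\det\mathcal{O}_x$ in the statement is to be read as $\mathcal{O}((d/l)x)$, the determinant of any length-$d/l$ torsion sheaf at $x$). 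The main obstacle will be making the support decomposition of the primitive $T_{0,d}$ rigorous, i.e., verifying that the factorization of Section 1.4 is a Hopf algebra isomorphism compatible with Green's form; once this is in place the rest of the argument is formal, and as an alternative to the global comparison one could determine $A(l,d)$ directly via the classical Macdonald--Hall--Littlewood description of $\H_{Tor(\E)_x}$.
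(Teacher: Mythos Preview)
Your approach matches the paper's: part $(a)$ is exactly the character orthogonality argument the paper invokes, and part $(b)$ is precisely the ``easy modification of Lemma~\ref{L:Hall1}$(c)$'' that the paper gestures at --- decomposing over closed points via the factorization $\H_{Tor(\E)}=\bigotimes_x \H_{Tor(\E)_x}$, noting that the $\chi$-twist acts by a scalar on each local primitive summand, and then reducing to the local norm computation.

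One point to tighten: your step ``this forces $A(l,d)=v^d l[d]/(d(v^{-1}-v))$'' does not follow from the single identity $\sum_{l\mid d}|Y_l|\,A(l,d)=\sum_{l\mid d} l|Y_l|\cdot\tfrac{v^{d-1}[d]}{(q-1)d}$ for one fixed curve, since the $|Y_l|$ are not independent variables. You already have the fix in hand: $A(l,d)=(t^{(d/l)}_x,t^{(d/l)}_x)_G$ depends only on the local ring and hence only on $q^l$ and $d/l$, so you may compute it directly in the classical Hall algebra over $\mathbb{F}_{q^l}$ (Macdonald's formula for the norm of the power-sum primitive), which yields the stated value. Once you commit to that direct local computation rather than the global matching, the argument is complete.
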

\begin{proof} Statement ($a$) follows directly from the definitions and from the orthogonality property of characters. The proof of statement ($b$) is an easy modification of Lemma~\ref{L:Hall1} ($c$).
\end{proof}

\vspace{.1in}

\noindent
Recall the Hecke action, see Section~1.5,
$$\bullet~: \H_{Tor(\E)} \otimes \H_{Vec(\E)} \to 
\H_{Vec(\E)}, \quad f \otimes g \mapsto \omega(f \cdot g).$$

\vspace{.1in}

\begin{lem}\label{L:prinhecke} There are complex numbers $\xi_d^{\sigma}$ for 
$\sigma \in \widehat{Pic(\E)}$ and $d \geqslant n$ such that for any two characters 
$\chi, \rho$ and any $d \geqslant 0$ we have
$$\theta^{\chi}_{0,d} \bullet \mathbf{1}^{\rho}_{1,n}=
\xi_{d}^{\chi\overline{\rho}} \mathbf{1}^{\rho}_{1,n+d}.$$
Moreover we have
\begin{equation}\label{E:heckethetaprin}
\theta^{\chi}_{0,d} \mathbf{1}^{\rho}_{1,n}=
\sum_{l=0}^d \zeta^{\chi\overline{\rho}}_{l} 
\mathbf{1}^{\rho}_{1,n+l}\theta^{\chi}_{0,d-l}
\end{equation}
and, as a series in $\CC[[s]]$,
\begin{equation}\label{E:heckethetaprin2}
\sum_{d \geqslant 0} \xi^{\chi}_{d}s^d=
\frac{\zeta^{\chi}_{\E}(s)}{\zeta^{\chi}(q^{-1}s)}.
\end{equation} 
\end{lem}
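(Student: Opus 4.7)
The plan is to mimic the spherical arguments of Lemma~\ref{L:Hall2} and Corollary~\ref{C:corred}, with every appearance of the trivial character replaced throughout by the combination $\chi\bar\rho$. The key preliminary is the $T$-analogue of the statement: I would first prove that
\[
[T^\chi_{0,d},\mathbf{1}^\rho_{1,n}]=c^{\chi\bar\rho}_d\,\mathbf{1}^\rho_{1,n+d}
\]
for a scalar $c^\sigma_d$ depending only on $\sigma=\chi\bar\rho$. The argument follows Lemma~\ref{L:Hall2}. First, the commutator lies in $\H_{Vec(\E)}$ by the coproduct argument of Proposition~\ref{prop:Hecket0d}, invoking the primitivity statement Lemma~\ref{L:printheta}(b). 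Next, I would pair against a test function $\mathbf{1}^{\rho'}_{1,n+d}$ via Green's form and the coproduct (\ref{E:princop}); by the Hopf property, only the $l=d$ summand of $\widetilde{\Delta}(\mathbf{1}^{\rho'}_{1,n+d})$ contributes, giving $(T^\chi_{0,d},\theta^{\rho'}_{0,d}\boldsymbol{\kappa}_{1,n})_G\cdot(\mathbf{1}^\rho_{1,n},\mathbf{1}^{\rho'}_{1,n})_G$. Lemma~\ref{L:prinscal}(a) forces $\rho'\sim\rho$ for a nonzero result, and expanding $\theta^{\rho'}_{0,d}=(v^{-1}-v)T^{\rho'}_{0,d}+\cdots$ via (\ref{E:thetaprin}) reduces the first factor to $(v^{-1}-v)(T^\chi_{0,d},T^\rho_{0,d})_G$, which by Lemma~\ref{L:prinscal}(b) depends only on $\chi\bar\rho$.

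Given this $T$-commutation, the full commutation formula (\ref{E:heckethetaprin}) and the scalars $\xi^{\chi\bar\rho}_l$ emerge from a combinatorial manipulation of the defining exponential (\ref{E:thetaprin}), precisely as in the proof of Corollary~\ref{C:corred}(a). Iterating the identity $T^\chi_{0,k}\mathbf{1}^\rho_{1,m}=\mathbf{1}^\rho_{1,m}T^\chi_{0,k}+c^{\chi\bar\rho}_k\mathbf{1}^\rho_{1,m+k}$ through each factor in the product expansion of $\theta^\chi_{0,d}$ produces coefficients $\xi^{\chi\bar\rho}_l$ governed by $\sum_l\xi^\sigma_l s^l=\exp\bigl((v^{-1}-v)\sum_k c^\sigma_k s^k\bigr)$; applying $\omega$ to the $l=d$ piece yields the first assertion of the lemma. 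For the generating function (\ref{E:heckethetaprin2}), substituting Lemma~\ref{L:prinscal}(b) into $(v^{-1}-v)c^\chi_d$ gives $(1-v^{2d})\sum_{\ell\mid d,\,x\in Y_\ell}\chi(\det\mathcal{O}_x)(\ell/d)$, which is exactly the coefficient of $s^d$ in $\log\bigl(\zeta^\chi_{\E}(s)/\zeta^\chi_{\E}(q^{-1}s)\bigr)$; exponentiating produces the claimed identity.

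The hard part is isolating the dependence on the combination $\chi\bar\rho$ through the scalar-product calculations. This hinges on the fact that Lemma~\ref{L:prinscal}(a) provides a character-independent norm for $\mathbf{1}^\rho_{1,n}$, so that the $(\mathbf{1}^\rho,\mathbf{1}^\rho)_G$ factors appearing as numerator and denominator in the Hecke coefficient cancel cleanly, and on the fact that the character dependence in Lemma~\ref{L:prinscal}(b) enters only via the determinant character $\chi\bar\rho(\det\mathcal{O}_x)$. Beyond this, the argument is a faithful transcription of the spherical proof.
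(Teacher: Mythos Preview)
Your proposal is correct and follows essentially the same route as the paper's proof: establish the $T$-version $[T^{\chi}_{0,d},\mathbf{1}^{\rho}_{1,n}]\in\CC\,\mathbf{1}^{\rho}_{1,n+d}$ via primitivity and the Green pairing against $\mathbf{1}^{\rho'}_{1,n+d}$ (using Lemma~\ref{L:prinscal}(a) to kill $\rho'\not\sim\rho$), then pass to $\theta^{\chi}$ through the exponential (\ref{E:thetaprin}) and read off the generating function from Lemma~\ref{L:prinscal}(b). The only cosmetic difference is that the paper cites Lemma~\ref{L:printheta}(a) for the commutation (\ref{E:heckethetaprin}) while you invoke the exponential directly as in Corollary~\ref{C:corred}; these are equivalent.
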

\begin{proof} Since $T^{\chi}_{0,d}$ is primitive,
i.e., $\Delta(T^{\chi}_{0,d})=T^{\chi}_{0,d} \otimes 1 + 
1 \otimes T^{\chi}_{0,d}$, we have
$$[T^{\chi}_{0,d}, \mathbf{1}^{\rho}_{1,n}]=
T^{\chi}_{0,d} \bullet \mathbf{1}^{\rho}_{1,n} \in \H_{Vec(X)}[1,n+d]=
\bigoplus_{\sigma} \CC \mathbf{1}^{\sigma}_{1,n+d},$$ where the sum ranges over 
a complete set of inequivalent characters $\sigma \in \widehat{Pic(\E)}$. 
Let us write $[T^{\chi}_{0,d}, \mathbf{1}^{\rho}_{1,n}]=
\sum_{\sigma} u^{\sigma}_{\chi,\rho} \mathbf{1}^{\sigma}_{1,n+d}$ 
for some scalars $u^{\sigma}_{\chi,\rho}$. We will prove that 
$u^{\sigma}_{\chi,\rho}=0$ unless $\sigma$ is equivalent to $\rho$. 
Indeed, if $\sigma \not\sim \rho$ then
$$([T^{\chi}_{0,d}, \mathbf{1}^{\rho}_{1,n}], 
\mathbf{1}^{\sigma}_{1,n+d})_G=u_{\chi,\rho}^{\sigma} 
(\mathbf{1}^{\sigma}_{1,n+d}, \mathbf{1}^{\sigma}_{1,n+d})_G$$
while on the other hand by Lemma~\ref{L:prinscal} ($a$)
\begin{equation*}
\begin{split}
([T^{\chi}_{0,d}, \mathbf{1}^{\rho}_{1,n}],\mathbf{1}^{\sigma}_{1,n+d})_G&
=(T^{\chi}_{0,d} \mathbf{1}^{\rho}_{1,n},\mathbf{1}^{\sigma}_{1,n+d})_G\\
&=(T^{\chi}_{0,d} \otimes \mathbf{1}^{\rho}_{1,n}, 
\widetilde{\Delta}(\mathbf{1}^{\sigma}_{1,n+d}))_G
=(T^{\chi}_{0,d}, \theta^{\sigma}_{0,d})_G (\mathbf{1}^{\rho}_{1,n}, 
\mathbf{1}^{\sigma}_{1,n})_G=0.
\end{split}
\end{equation*}
It follows that $[T^{\chi}_{0,d}, \mathbf{1}^{\rho}_{1,n}] \in 
\CC \mathbf{1}^{\rho}_{1,n+d}$. Using (\ref{E:thetaprin}) we deduce 
that $\theta^{\chi}_{0,d} \bullet \mathbf{1}^{\rho}_{1,n} \in 
\CC \mathbf{1}^{\rho}_{1,n+d}$ as wanted. Statement (\ref{E:heckethetaprin}) 
is a consequence of Lemma~\ref{L:printheta} ($a$). To prove 
(\ref{E:heckethetaprin2}), note that by the above calculation together with 
Lemma~\ref{L:prinscal} ($b$) we have
$$T^{\chi}_{0,d} \bullet \mathbf{1}^{\rho}_{1,n}=
v^d[d] \sum_{l |d}\sum_{x \in Y_d} \chi \overline{\rho}
(det(\mathcal{O}_x)) \frac{l}{d} \cdot \mathbf{1}^{\rho}_{1,n+d},$$
from which it entails
\begin{equation*}
\begin{split}
\sum_{d \geqslant 0} \xi^{\chi \overline{\rho}}_{d}s^d&=
\exp \bigg( (v^{-1}-v) \sum_{d \geqslant 1} v^d[d] 
\sum_{l |d}\sum_{x \in Y_d} \chi 
\overline{\rho}(det(\mathcal{O}_x)) \frac{l}{d} s^d\bigg),\\
&=\exp \bigg( \sum_{d \geqslant 1} (1-q^{-d}) 
\sum_{l |d}\sum_{x \in Y_d} 
\chi \overline{\rho}(det(\mathcal{O}_x)) \frac{l}{d} s^d\bigg),\\
&=
\frac{\zeta^{\chi\overline{\rho}}_{\E}(s)}
{\zeta^{\chi\overline{\rho}}(q^{-1}s)}.
\end{split}
\end{equation*}
Lemma~\ref{L:prinhecke} is proved.
\end{proof}

\vspace{.2in}

\paragraph{\textbf{A.4.}} We may use the constant term map and the above 
Lemmas~\ref{L:printheta}-\ref{L:prinhecke} to get a combinatorial
realization of $\H^{>,pr}_{\E}$ as a shuffle algebra. 
We describe this realization below and leave the details of the proof 
to the reader.
Let $\Xi$ be a set of representatives in $\widehat{Pic(\E)}$ for the 
equivalence classes $\widehat{Pic(\E)}/Im(\iota)$, where $\iota$ is as in
(\ref{iota}). We may (and we will) 
choose the elements of $\Xi$ to be unitary, i.e., $|\chi(\mathcal{L})|=1$ 
for any $\chi \in \Xi$ and $\mathcal{L} \in Pic(\E)$. Since the characters of a
finite abelian group $B$ form a $\CC$-basis of the set of all functions 
$B \to \CC$, the collection of elements 
$\{\mathbf{1}^{\chi}_{1,d};d \in \Z, \;\chi \in \Xi\}$ forms a basis of 
$\H^>_{\E}[1]$. Hence it generates $\H^{>,pr}_{\E}$. 
For $\chi_1,\dots,\chi_r \in \Xi$ let 
$\delta_{\chi_1, \ldots, \chi_r} \in \CC[\Xi^r]$ be the characteristic function
of $\{(\chi_1, \ldots, \chi_r)\}$. This allows us to write
$$\D_r=\CC[\Xi^r\times(\CC^\times)^r] = \bigoplus_{\chi_1, \ldots, \chi_r}\bigoplus_{d_1, \ldots, d_r} 
\CC \delta_{\chi_1, \ldots, \chi_r} x_1^{d_1} \cdots x_r^{d_r}, \quad 
\chi_i \in \Xi,\quad d_i \in \Z.$$
To any $\chi \in \Xi$ we associate a function $g^{\chi}(z) \in \CC(z)$ 
as follows. Let $\{\beta_1^{\chi}, \ldots, \beta_{2g}^{\chi}\}$ be the 
Frobenius eigenvalues in $H^1(\E \otimes \fqb, \mathcal{L}_{\chi})$ where 
$\mathcal{L}_{\chi}$ is the rank one local system on $\E \otimes \fqb$ 
associated with $\chi$. We have 
$$\zeta_{\E}^{\chi}(z)=\prod_{i=1}^{2g}(1-\beta_i^{\chi}z)/(1-z)(1-qz),\quad
|\b_i^{\chi}|=q^{1/2},
$$
see e.g., \cite{De}. We define
$$\gamma_{\chi}=q^{-1}\prod_{i=1}^{2g} \beta^{\chi}_i=
\prod_{l=0}^2 det (Fr; H^i(\E \otimes \fqb, \mathcal{L}_{\chi}))^{(-1)^{i+1}}.$$
This factor enters into the functional equation
\begin{equation}
\zeta^{\chi}(q^{-1}z)=\gamma_{\chi}^{-1} z^{2(g-1)} 
\zeta^{\overline{\chi}}(z^{-1}).
\end{equation}
Put 
$$\gathered
g^{\chi}(z)=(1-qz^{-1})(1-qz)z^{g-1}\gamma_{\chi}^{-1/2}
\zeta^{\chi}(z^{-1}),\cr
g=\sum_{\chi_1, \chi_2} 
\delta_{\chi_1,\chi_2} g^{\chi_1 \overline{\chi_2}}(x_1/x_2) \in \D_2.
\endgathered$$ 
For any pair $(i,j) \subset \{1, \ldots, r\}$ we set also 
$$g_{i,j}=\sum_{\chi_1, \ldots, \chi_r} 
\delta_{\chi_1, \ldots, \chi_r} g^{\chi_i\overline{\chi_j}}(x_i/x_j).$$
Finally, we put $g(x_1, \ldots, x_r)=\prod_{i<j} g_{i,j} \in \D_r$. 
We are now ready to define our shuffle algebra. Consider the weighted 
symmetrization operator
\begin{align*}
\Psi_r \;: \D_r \to \D_r,\quad
f(x_1, \ldots, x_r) \mapsto \sum_{\sigma \in \mathfrak{S}_r} \sigma 
(f(x_1, \ldots, x_r) g(x_1, \ldots, x_r)).
\end{align*}
Let $\mathbf{A}_r \subset \D_r$ be the image of $\Psi_r$ and set 
$\mathbf{A}^{pr}=\bigoplus_{r \geqslant 0} \mathbf{A}_r$. As in the context of 
Section~1.9 there exists a unique associative algebra structure on 
$\mathbf{A}^{pr}$ fitting in the commutative diagram
\begin{equation}\label{E:shufflediagram2}
\xymatrix{ \D_r\otimes \D_s
\ar[r]^-{\Psi_r \otimes \Psi_{s}} \ar[d]_-{i_{r,s}} & \mathbf{A}_r \otimes 
\mathbf{A}_{s} \ar[d]_-{m_{r,s}} \\
\D_{r+s} \ar[r]^-{\Psi_{r+s}} &
\mathbf{A}_{r+s}.}
\end{equation}

\vspace{.1in}

\begin{prop}\label{P:prinshuffle} The assignement $\mathbf{1}^{\chi}_{1,l} \mapsto \delta_{\chi}x^l \in \mathbf{A}_{1}$ extends to an
algebra isomorphism 
$\Upsilon^{pr}_{\E}: \H^{>,pr}_{\E} \stackrel{\sim}{\to}\mathbf{A}^{pr}$.
\end{prop}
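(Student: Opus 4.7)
The strategy mirrors that of Propositions~\ref{P:shuffle1} and \ref{P:kloop} in Sections~1.9--1.10, with the character parameter $\chi$ inserted at every step. The plan is as follows.

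First, I would define the constant term map
$$J_r~: \H^{>,pr}_{\E}[r] \to \bigl(\H^{>,pr}_{\E}[1]\bigr)^{\widehat{\otimes}r},
\quad u \mapsto (\omega \otimes \cdots \otimes \omega) \Delta_{1,\ldots,1}(u),$$
exactly as in Section~1.9, and identify $\H^{>,pr}_{\E}[1]$ with $\bigoplus_{\chi \in \Xi} \CC[\Xi]\otimes\CC[x^{\pm 1}]$ via $\mathbf{1}^{\chi}_{1,d} \mapsto \delta_{\chi} x^d$. Injectivity of $J=\bigoplus_r J_r$ follows from Lemma~\ref{L:prinscal}(a): since the $\mathbf{1}^{\chi}_{1,d}$ with $\chi \in \Xi$, $d \in \Z$ are pairwise orthogonal and nonzero with respect to Green's form, and since they generate $\H^{>,pr}_{\E}$, the form is non-degenerate on $\H^{>,pr}_{\E}$, and the argument of Lemma~\ref{L:constantinj} applies verbatim.

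Second, I would compute $J_r(\mathbf{1}^{\chi_1}_{1,d_1} \cdots \mathbf{1}^{\chi_r}_{1,d_r})$ explicitly, following the pattern of (\ref{E:Hall23})--(\ref{E:Hall345}). The coproduct formula (\ref{E:princop}) of Lemma~\ref{L:printor} plays the role of (\ref{E:Hall1}), and the commutation relation (\ref{E:heckethetaprin}) of Lemma~\ref{L:prinhecke} plays the role of Corollary~\ref{C:corred}(a), so that the reordering computation (\ref{E:Hall234})--(\ref{E:Hall2345}) produces a character-dependent kernel
$$h_{\E}^{\chi\overline{\rho}}(z) = q^{1-g}\zeta^{\chi\overline{\rho}}_{\E}(z)/\zeta^{\chi\overline{\rho}}_{\E}(q^{-1}z).$$
The upshot is that $J$ is an algebra homomorphism into the non-symmetric shuffle algebra built on $\D_r$ with kernel $h^{\chi_i \overline{\chi_j}}(x_i/x_j)$ on the pair $(i,j)$.

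Third, I would transfer from this non-symmetric shuffle presentation to the symmetric one of the proposition, by the analog of the intertwiner $\Xi$ from Proposition~\ref{P:kloop}. Concretely, one looks for a function $g^{\chi}(z)$ such that $h_{\E}^{\chi}(z) = g^{\overline{\chi}}(z^{-1})/g^{\chi}(z)$, so that multiplication by $g(x_1,\ldots,x_r) = \prod_{i<j} g^{\chi_i\overline{\chi_j}}(x_i/x_j)$ converts the two shuffle products into each other (as in the diagram (\ref{E:shufflediagram})). Using the functional equation $\zeta^{\chi}(q^{-1}z) = \gamma_{\chi}^{-1}z^{2(g-1)}\zeta^{\overline{\chi}}(z^{-1})$ and $\gamma_{\chi}\gamma_{\overline{\chi}}=1$, one verifies that $g^{\chi}(z)=(1-qz^{-1})(1-qz)z^{g-1}\gamma_{\chi}^{-1/2}\zeta^{\chi}(z^{-1})$ is indeed a solution, which is why the normalization in Section~A.4 was chosen that way. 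This shows $\Upsilon^{pr}_{\E}$ is a well-defined injective algebra map into $\mathbf{A}^{pr}$, and surjectivity is immediate since both sides are generated in degree one by the images of the $\mathbf{1}^{\chi}_{1,d}$.

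The main technical point, and the step that requires the most care, is the bookkeeping in the third part: verifying that the character-dependent functional equation is compatible with the square-root factor $\gamma_\chi^{-1/2}$ (for which one must pick a coherent choice of square roots across $\Xi$), and checking that the commutation relation (\ref{E:heckethetaprin}) for $\theta^{\chi}_{0,d}$ against $\mathbf{1}^{\rho}_{1,n}$ really produces the kernel $h^{\chi\overline{\rho}}$, since the character of the torsion sheaf interacts with the character of the line bundle through the identification $\chi(\mathcal{L})\rho(\mathcal{L}')=\chi\overline\rho(\mathcal{L}')\cdot\chi(\mathcal{L}\otimes\mathcal{L}'^{-1})$. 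Once this is in place, the rest of the argument is a straightforward transcription of Sections~1.9--1.10.
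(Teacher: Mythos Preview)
Your proposal is correct and follows exactly the approach the paper intends: the text preceding the proposition explicitly says ``We may use the constant term map and the above Lemmas~\ref{L:printheta}--\ref{L:prinhecke} to get a combinatorial realization of $\H^{>,pr}_{\E}$ as a shuffle algebra \ldots\ and leave the details of the proof to the reader,'' and your three-step outline (injectivity of $J$ via Green's form, the reordering computation with the $\chi$-twisted coproduct and Hecke relation, then the passage to the symmetric shuffle via the functional equation for $\zeta^{\chi}_{\E}$) is precisely the transcription of Sections~1.9--1.10 that the paper has in mind. Your flagging of the $\gamma_\chi^{-1/2}$ bookkeeping as the one place needing care is also apt.
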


\vspace{.1in}

\noindent
Note that unlike for the spherical Hall algebra $\UU_X$, there are no rational forms $\mathbf{H}^{pr}_R$ of $\mathbf{H}^{pr}_X$~: the principal Hall algebra 
depends on more than just the set $\{\a_1, \ldots, \bar\a_g\}$ of Weil numbers 
of $X$; it depends on the group stucture of $Pic^0(X)$ as well.

\vspace{.2in}

\section{Generalization to arbitrary reductive groups}

\vspace{.15in}

The isomorphism $\Theta$, when restricted to a given rank $r$, 
provides a Langlands 
isomorphism between a subspace $\Vb_r$ 
of the equivariant K-theory $K^{T_s \times GL_r}(C_{\mathfrak{gl}_r})$ and 
the spherical component $\UU^>_X[r]$ of $\H^>_X[r]$, which can be regarded as the space of functions on 
${Bun}_rX={Bun}_{GL_r}X$
which are induced from locally constant function on ${Bun}_{H}X$. 
Here $H \subset GL_r$ is a maximal torus.
This isomorphism may be generalized to an arbitrary algebraic reductive group $G$ in place of $GL_r$. 
Indeed, the right hand side of $\Theta$, i.e.,
the spherical component of $Fun({Bun}_GX)$, may be defined in 
general and described using the Gindikin-Karpelevich formula, while the 
left hand side makes sense for an arbitrary 
reductive group $G^{\vee}$ as well and may be computed using the method of Section~2. We briefly explain 
this in the present section.

\vspace{.2in}

\paragraph{\textbf{B.1.}} Let $G$ be a reductive algebraic group defined over $\mathbb{F}_q$. 
Let $F=\mathbb{F}_q(X)$ be the function field of $X$. For any closed point $x \in X$ 
we denote by $\widehat{O}_x$ the completed local ring at $x$ and by $F_x$ is field of fractions. 
Let $\mathbb{A}_X$, resp. $\mathbb{O}_X$ be the ring of ad\`eles, resp. integer ad\`eles of $X$.
Following Weil, we identify the set $Bun_GX$ of $G$-bundles over $X$ with a double quotient
$$Bun_GX \simeq G(F) \backslash G(\mathbb{A}_X) /G(\mathbb{O}_X).$$ 
Let $B \subset G$ be a Borel subgroup and let $H=B/U$ be the Levi factor (a torus). The
sets of $H$-bundles and $B$-bundles over $X$ are
$$Bun_BX \simeq B(F) \backslash B(\mathbb{A}_X) /B(\mathbb{O}_X), \qquad 
Bun_HX \simeq H(F) \backslash H(\mathbb{A}_X) /H(\mathbb{O}_X).$$ 
Let  $AF_{X,G}$ be the set of compactly supported functions $f: Bun_GX \to \CC$. 
We define $AF_{X,H}$ in a similar way.
The natural inclusion and projection $B \to G$, $B \to H$ induce maps
$$\xymatrix{ Bun_HX & Bun_BX \ar[r]^-{p_2} \ar[l]_-{p_1} & Bun_GX.}$$
The map $p_1$ is smooth. We define the (Eisenstein) induction map as 
\begin{equation*}
\begin{split}
Ind_H^G : AF_{X,H} \to AF_{X,G},\quad
 f \mapsto p_{1!}p_2^*(f),
 \end{split}
 \end{equation*}
and the constant term map as 
\begin{equation*}
\begin{split}
Res_G^H: AF_{X,G} \to Fun(Bun_HX, \CC),\quad
g \mapsto p_{2!}p_1^*(g).
\end{split}
\end{equation*}
For $G=GL_n$ we have $AF_{X,G}=\H_X^>[n]$, $AF_{X,H}=(\H_X^>[1])^{\otimes n}$ and the 
induction/restriction maps correspond to the product/coproduct in the Hall algebra of $X$.

\vspace{.2in}

\paragraph{\textbf{B.2.}}
The connected components of $Bun_HX$ are parametrized by elements of the 
lattice of cocharacters 
$P^\vee= Hom(\CC^\times, H)$. To $\lambda^\vee \in P^\vee$ we associate the 
connected component 
of the $H$-bundle $\mathcal{F}_{\lambda^\vee}$ 
induced from some line bundle $\mathcal{F}\in Pic^1(X)$ by the
map $\lambda^{\vee} : \CC^\times \to H$.  
Let $1_{\lambda^\vee} \in AF_{X,H}$ 
be the characteristic function of
the connected component of $\mathcal{F}_{\lambda^\vee}$. Let 
$$\UU_{X,H} \subset AF_{X,H}$$ be the space of locally constant functions. 
It is identified with the
group algebra $\CC[P^\vee]$ in the obvious way. 
Let $\Delta^\vee \subset P^\vee$ 
be the coroot system of $G$, and let 
$\Delta^{\vee,+}$ be the set of positive coroots corresponding to $B$. 
We likewise define $P^{\vee,+}$ 
to be the positive cone in $P^\vee$. 
Let $\widehat{\UU}_{X,H}=\widehat{\CC[P^\vee]}$ be the formal completion
of $\UU_{X,H}=\CC[P^\vee]$ in the direction of $P^{\vee,+}$. An element of ${\widehat{\UU}}_{X,H}$ is a formal 
linear combination  $\sum_{\lambda^\vee} a_{\lambda^\vee} 1_{\lambda^\vee}$ whose support lies in a finite 
union of translates of $P^{\vee,+}$. For $s(z) \in \CC(z)$ a rational function and $\mu^\vee \in P^\vee$ we 
denote by $s(1_{\mu^\vee}) \in \widehat{\UU}_{X,H}$ the expansion in the direction of $P^{\vee,+}$. Here we 
use the multiplicative structure
of $\CC[P^\vee]$, i.e., we have 
$$1_{\lambda^\vee} 1_{\mu^\vee}=1_{\lambda^\vee+\mu^\vee}.$$
Finally, we set $u_{\lambda^\vee}=Ind_H^G ( 1_{\lambda^\vee})$, and we let 
$$\UU_{X,G} \subset AF_{X,G}$$ denote the subspace linearly spanned by the  set
$\{u_{\lambda^\vee}; \lambda^\vee \in P^\vee\}$. 
For $G=GL_n$ we have $\UU_{X,G} = \UU^>_X[n]$ 
with $\UU^>_X$ being the spherical Hall algebra of $X$.

\vspace{.1in}

\begin{prop}\label{P:GKarp} The following holds.

(a) the map $Res_G^H : \UU_{X,G} \to Fun(Bun_HX,\mathbb{C})$ 
is injective and takes values in 
$\widehat{\UU}_{X,H}$,

(b) for any $\lambda^\vee \in P^\vee$ we have
$$Res_G^H(u_{\lambda^\vee})=Res_G^H \circ Ind_H^G (1_{\lambda^\vee})
=\sum_{\sigma \in W} c_{\sigma} 1_{w \lambda^\vee}$$
where 
$$c_{\sigma}=
\prod_{\alpha \in \Psi_{\sigma}} q^{1-g} 
\frac{\zeta_X(1_{-\alpha^\vee})}{\zeta_X(q1_{-\alpha^\vee})},\quad
\Psi_{\sigma}=\Delta^+ \cap w^{-1}(\Delta^+).$$
\end{prop}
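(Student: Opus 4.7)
The plan is to prove (b) first by a Gindikin-Karpelevich-type computation, from which (a) will follow as an immediate consequence. The composition $Res_G^H \circ Ind_H^G$ is governed by the fiber product $Bun_BX \times_{Bun_GX} Bun_BX$, which admits a natural stratification indexed by $w \in W$, reflecting the Bruhat decomposition $B \backslash G / B = \bigsqcup_{w \in W} BwB/B$; on the stratum labelled by $w$, the two induced $H$-bundles differ by the Weyl translate $w$, which explains why the right-hand side of (b) is a sum over $W$ with $1_{\lambda^\vee}$ replaced by $1_{w\lambda^\vee}$.

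The core step is to evaluate the contribution of each stratum. Passing to the adelic description $Bun_GX = G(F)\backslash G(\mathbb{A}_X)/G(\mathbb{O}_X)$, the relevant integral factorizes as an Euler product over the closed points $x \in |X|$. At each point $x$ one is reduced to a local integral over $U_w(F_x)/U_w(\widehat{O}_x)$, where $U_w = U \cap w^{-1}U^- w$ is the unipotent subgroup whose roots are precisely $\Psi_w = \Delta^+ \cap w^{-1}(\Delta^+)$; the classical local Gindikin-Karpelevich identity (Langlands, Macdonald) evaluates this integral to a finite product, over the roots in $\Psi_w$, of local zeta factors. Taking the Euler product over all $x$ collapses these into the global expression $\prod_{\alpha \in \Psi_w}\zeta_X(1_{-\alpha^\vee})/\zeta_X(q 1_{-\alpha^\vee})$, and the Riemann-Roch discrepancy between the adelic Haar measure on $U_w(\mathbb{A}_X)/U_w(\mathbb{O}_X)$ and the counting measure on the stack produces the prefactor $q^{(1-g)|\Psi_w|}$, giving precisely the coefficient $c_w$ in (b).

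Part (a) then follows from the explicit form of (b): each $c_w \cdot 1_{w\lambda^\vee}$, expanded as a formal series in the prescribed direction, has support in a translate of $P^{\vee,+}$, so $Res_G^H(u_{\lambda^\vee})$ lies in $\widehat{\UU}_{X,H}$, and a leading-term argument applied to a sufficiently regular family of $\lambda^\vee$'s yields the injectivity of $Res_G^H$ on $\UU_{X,G}$. The main obstacle is the careful bookkeeping of measures and normalizations in the second step: once the adelic Haar measure, the local Gindikin-Karpelevich factor, and the Riemann-Roch twist are properly aligned, the identity itself is classical and the rest of the argument is formal.
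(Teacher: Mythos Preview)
Your treatment of (b) matches the paper's: both regard it as the classical Gindikin--Karpelevich formula, and the paper simply cites Langlands and Harder while you sketch the standard adelic proof (Bruhat stratification, Euler factorization, local integral over $U_w$, Riemann--Roch correction). There is nothing to compare here.

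For (a) your approach differs from the paper's. The paper does \emph{not} deduce injectivity from the explicit formula (b); instead it argues exactly as in Lemma~\ref{L:constantinj} for $GL_n$: $Res_G^H$ and $Ind_H^G$ are adjoint with respect to the natural $L^2$-pairings, so $Res_G^H(u)=0$ forces $(u,\,Ind_H^G(1_{\lambda^\vee}))_G=(u,u_{\lambda^\vee})_G=0$ for every $\lambda^\vee$, and since $\UU_{X,G}$ is by definition spanned by the $u_{\lambda^\vee}$ one concludes $u=0$ from nondegeneracy of the restricted pairing. This is a soft argument that never touches the constants $c_\sigma$. Your leading-term argument instead exploits the explicit shape of (b); this can certainly be made to work (it amounts to showing that the map $\Psi_G$ in~(\ref{D:diagGKarp1}) is injective on $\CC[P^\vee]$, which follows by looking at the extremal monomial in the $W$-orbit), but as written it is a bit thin: you invoke ``a sufficiently regular family of $\lambda^\vee$'s'', yet $\UU_{X,G}$ is spanned by \emph{all} $u_{\lambda^\vee}$, including singular ones, so you must either reduce to the regular case (e.g.\ via the Hecke action) or carry out the triangularity argument uniformly. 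The trade-off is that your route avoids appealing to nondegeneracy of the Green pairing on $\UU_{X,G}$ (which for general $G$ is not entirely trivial), at the cost of a combinatorial argument that needs a couple more lines to be airtight.
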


\noindent Statement $(a)$ is proved as in the $GL_n$ case, 
using the nondegeneracy 
of the natural pairing in $\UU_{X,H}$ 
and the adjunction of $Res_G^H$ and $Ind_H^G$. 
Statement $(b)$ is the Gindikin-Karpelevich 
formula, see e.g. \cite{GindiKarp} and \cite{Casselmann}.
As in the $GL_n$ case, it is useful to rephrase the above proposition in combinatorial terms\footnote{Note that 
the sign convention here is different from the one used for $GL_n$, 
see Remark 3.2.2}. 
Put $e_X(z)=z^{1-g}\tilde{\zeta}_X(z)$, where $\tilde{\zeta}_X(z)$ is given by (\ref{E:deftzetax(z)}). 
Define a symmetrization operator 
$$\Psi_G : \CC[P^\vee] \to \CC[P^\vee]^W,\quad
\Psi_G (1_{\lambda^\vee}) =
\sum_{\sigma \in W} \sigma \big\{ \big(\prod_{\alpha \in \Delta^+} 
e_X(1_{\alpha^\vee}) \big)\cdot 1_{\lambda^\vee}\big\}.$$
We will denote the image of $\Psi_G$ by $\mathbf{A}_G$. 
Consider the map 
$$\Xi : \CC[P^\vee]^W \to \widehat{\CC[P^\vee]}, \quad
1_{\lambda^\vee} \mapsto \big(\prod_{\alpha \in \Delta^+} 
e_X^{-1}(1_{\alpha^\vee})\big) 1_{\lambda^\vee}.$$
From Proposition~\ref{P:GKarp} $(b)$ we 
see that the following diagram is commutative
\begin{equation}\label{D:diagGKarp1}
\xymatrix{ \CC[P^\vee] \ar[rr]^-{\Psi_G} \ar[d]^-{\wr} & &\CC[P^\vee]^W \ar[d]^-{\Xi} &\\
\UU_{X,H} \ar[rr]^-{Res_G^H \circ Ind_H^G} && \widehat{\CC[P^\vee]} \ar[r]^-{\sim} & \widehat{\UU}_{X,H}.}
\end{equation}
The leftmost vertical map and the rightmost horizontal one are the canonical isomorphisms. 
We deduce that there exists a (unique)
isomorphism $\iota_G : \UU_{X,G} \to \mathbf{A}_G$ making the next diagram commutative
\begin{equation}\label{D:diagGKarp2}
\xymatrix{ \CC[P^\vee] \ar[rr]^-{\Psi_G} \ar[d]^-{\wr} & &\mathbf{A}_G \ar[d]^-{\Xi} \\
\UU_{X,H} \ar[r]^-{Ind_H^G} & \UU_{X,G} \ar[r]^-{Res_G^H} \ar[ur]^-{\iota_G} & \widehat{\UU}_{X,H}}
\end{equation}
As in the $GL_n$ case, we twist the induction product. 
Let $\rho$ be the 
half-sum of all positive roots of $G$. We define a new induction product 
$\dot{Ind}_H^G: \UU_{X,H} \to \UU_{X,G}$ by
$$ \dot{Ind}_H^G(1_{\lambda^\vee})=
Ind_H^G(1_{2(g-1) \rho^\vee +\lambda^\vee}).$$
Next, define a new symmetrization operator $\dot{\Psi}_G: \CC[P^\vee] \to \CC[P^\vee]^{W}$ by
$$\dot{\Psi}_G (1_{\lambda^\vee}) =\sum_{\sigma \in W} \sigma  \big\{ \big(\prod_{\alpha \in \Delta^+} \tilde{\zeta}_X(1_{\alpha^\vee}) \big)\cdot 1_{\lambda^\vee}\big\}.$$
We will denote the image of $\dot{\Psi}_G$ by $\dot{\mathbf{A}}_G$. 
It is easy to check that 
there is a commutative diagram
\begin{equation}\label{D:diagGKarp3}
\xymatrix{ \CC[P^\vee] \ar[rr]^-{\dot{\Psi}_G} \ar[d]^-{\wr} & &\dot{\mathbf{A}}_G \ar[d]^-{\Xi} \\
\UU_{X,H} \ar[r]^-{\dot{Ind}_H^G} & \UU_{X,G} \ar[r]^-{Res_G^H} \ar[ur]^-{\dot{\iota}_G} & \widehat{\UU}_{X,H}}
\end{equation}
for some (unique) isomorphism 
$\dot{\iota}_G : \UU_{X,G} \to \dot{\mathbf{A}}_G$.
We summarize all this in the following fashion.

\vspace{.1in}

\begin{prop} There is a canonical isomorphism 
$\dot{\iota}_G : \UU_{X,G} \to \dot{\mathbf{A}}_G$ making the 
following diagram commutative 
\begin{equation}\label{D:diagGKarp4}
\xymatrix{ \CC[P^\vee] \ar[rr]^-{\dot{\Psi}_G}  & &\dot{\mathbf{A}}_G \\
\UU_{X,H} \ar[u]_-{\wr} \ar[rr]^-{\dot{Ind}_H^G} & & \UU_{X,G}  \ar[u]^-{\dot{\iota}_G} }
\end{equation}
\end{prop}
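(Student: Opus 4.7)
The plan is to deduce the proposition from Proposition \ref{P:GKarp} (Gindikin-Karpelevich) in two stages. First, I would verify the untwisted diagram \eqref{D:diagGKarp2} to produce an isomorphism $\iota_G\colon \UU_{X,G}\to \mathbf{A}_G$; then I would derive the twisted diagram \eqref{D:diagGKarp3} by a direct change of parameter. Uniqueness of $\dot\iota_G$ is automatic from Proposition \ref{P:GKarp}(a), which says that $Res_G^H$ is injective on $\UU_{X,G}$.

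For the untwisted step, the key identity to verify is
\[
\Xi\circ\Psi_G(1_{\lambda^\vee}) = Res_G^H\circ Ind_H^G(1_{\lambda^\vee})
\]
for every $\lambda^\vee \in P^\vee$. Expanding the left-hand side and comparing with the formula of Proposition \ref{P:GKarp}(b), this reduces to the identity of formal expansions
\[
\prod_{\alpha\in\Delta^+}\frac{e_X(1_{\sigma\alpha^\vee})}{e_X(1_{\alpha^\vee})} = c_\sigma,\quad \sigma\in W.
\]
Using $e_X(z) = z^{1-g}\tilde\zeta_X(z)$ and the functional equation for $\zeta_X$, a standard Gindikin-Karpelevich cancellation reduces the left-hand side to a product over $\Psi_\sigma = \Delta^+\cap\sigma^{-1}\Delta^+$ matching $c_\sigma$. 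This is the direct analogue of the shuffle identity \eqref{E:Hall345} at the core of Proposition \ref{P:shuffle1}. Since $\Psi_G$ surjects onto $\mathbf{A}_G$ by construction, $\iota_G$ is then an isomorphism.

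For the twisted step, I would prove the identity
\[
\Psi_G(1_{\lambda^\vee+2(g-1)\rho^\vee}) = \dot\Psi_G(1_{\lambda^\vee}),
\]
which follows from $e_X(z) = z^{1-g}\tilde\zeta_X(z)$ and the standard relation $\sum_{\alpha\in\Delta^+}\alpha^\vee = 2\rho^\vee$: the $z^{1-g}$ prefactors contribute an overall shift $1_{2(1-g)\rho^\vee}$ which combines with $1_{\lambda^\vee+2(g-1)\rho^\vee}$ to give $1_{\lambda^\vee}$ inside the symmetrization. In particular $\mathbf{A}_G = \dot{\mathbf{A}}_G$ as subspaces of $\CC[P^\vee]^W$, and setting $\dot\iota_G := \iota_G$ under this identification gives
\begin{align*}
\dot\iota_G \circ \dot{Ind}_H^G(1_{\lambda^\vee})
&= \iota_G \circ Ind_H^G(1_{\lambda^\vee+2(g-1)\rho^\vee}) \\
&= \Psi_G(1_{\lambda^\vee+2(g-1)\rho^\vee}) = \dot\Psi_G(1_{\lambda^\vee}),
\end{align*}
which is the commutativity of \eqref{D:diagGKarp3}. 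The main obstacle is the bookkeeping in the untwisted step — keeping track of how the rational function $e_X(1_{\alpha^\vee})$ is expanded in the dominant direction, and how the set $\Psi_\sigma$ emerges after Weyl symmetrization — but this is a routine generalization of the $GL_n$ case handled in Section 1.9.
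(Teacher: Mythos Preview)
Your proposal is correct and follows essentially the same route as the paper: the paper establishes the untwisted diagram \eqref{D:diagGKarp1} from Proposition~\ref{P:GKarp}, deduces \eqref{D:diagGKarp2} and the isomorphism $\iota_G$, then passes to the twisted diagram \eqref{D:diagGKarp3} via the identity $e_X(z)=z^{1-g}\tilde\zeta_X(z)$ and $\sum_{\alpha\in\Delta^+}\alpha^\vee=2\rho^\vee$, and finally restates \eqref{D:diagGKarp3} as the proposition. Your write-up simply makes explicit the bookkeeping the paper leaves implicit (``it is easy to check'').
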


\vspace{.1in}

\noindent This proposition has a useful corollary. 
Observe that $\CC[P^\vee], \CC[P^\vee]^W$ do not depend on 
$X$, while $\dot{\Psi}_G$ only depends (polynomially) on the 
Weyl numbers $\a_1, \bar\a_1, \ldots, \a_g, \bar\a_g$. 
Hence all of these may be defined over the representation ring $R_a$ of the torus $T_a$. It follows that there 
exists 'universal' versions $\UU_{R_a,H}, \UU_{R_a,G}$ and $\dot{Ind}_H^G$ of 
$\UU_{X,H}, \UU_{X,G}$ and $\dot{Ind}_H^G$ defined over $R_a$, 
depending only on the genus $g$ of $X$,
and which specialize to $\UU_{X,H}, \UU_{X,G}$ and $\dot{Ind}_H^G$ for any 
particular choice of $X$. Of 
course, (\ref{D:diagGKarp4}) holds for these $R_a$-forms, i.e.,
there is a commutative diagram
\begin{equation}\label{D:diagGKarp45}
\xymatrix{ {R_a}[P^\vee] \ar[rr]^-{\dot{\Psi}_G}  & &\dot{\mathbf{A}}_{R_a,G} \\
\UU_{R_a,H}\ar[u]_-{\wr} \ar[rr]^-{\dot{Ind}_H^G} & & \UU_{R_a,G}  \ar[u]^-{\dot{\iota}_G} }
\end{equation}

\vspace{.2in}

\paragraph{\textbf{B.3.}} Let $G^{\vee}$ denote the Langlands dual group to $G$ (a \textit{complex} reductive 
group), and let $H^{\vee}$ be a maximal torus of $G^{\vee}$. The lattice of characters $Hom(H^\vee,\CC^\times)$ of 
$H^{\vee}$ is identified with $P^\vee$. Let $\mathfrak{g}^{\vee}$ and $\mathfrak{h}^{\vee}$ be the respective 
Lie algebras of $G^\vee$ and $H^\vee$. Put
$$C_{\mathfrak{g}^{\vee}}=
\{(a,b) \in (\mathfrak{g}^{\vee})^g\times(\mathfrak{g}^{\vee})^g\;;\;
\sum_r ad(a_r)(b_r)=0\},\quad 
C_{\mathfrak{h}^{\vee}}=\mathfrak{h}^{\vee})^g \times (\mathfrak{h}^{\vee})^g.$$
The torus $T_s$ acts on both $C_{\mathfrak{g}^{\vee}}$ and 
$C_{\mathfrak{h}^{\vee}}$, see Section~2.2. We 
may apply the method of Section~2.3 \textit{verbatim} to define a
convolution operation 
$$\mu~:K^{T_s \times H^{\vee}}(C_{\mathfrak{h}^{\vee}}) \to 
K^{T_s \times G^{\vee}}(C_{\mathfrak{g}^{\vee}}).$$
We set 
$$\Vb_{H^\vee}=K^{T_s \times H^{\vee}}(C_{\mathfrak{h}^{\vee}}), 
\quad
\Vb_{G^\vee}=
K^{T_s \times G^{\vee}}(C_{\mathfrak{g}^{\vee}}).$$
Let $R_{G^{\vee}}$, $R_{H^{\vee}}$ denote the complexified representation 
rings of $T_s \times G^{\vee}$, $T_s \times H^{\vee}$. We have
\begin{equation}\label{E:identification1}
R_{G^{\vee}} = R_s[P^{\vee}]^{W}, \qquad R_{H^{\vee}}=R_s[P^{\vee}]
\end{equation}
where $P^{\vee}=Hom(H^{\vee},\CC^\times)$ is the character group of $H^{\vee}$ 
and $W$ is the Weyl group of $(G^{\vee},H^\vee)$. The restriction to $\{0\}$ 
gives rise, as in Section~2.5, to maps
$$\gamma_{G^{\vee}} : \Vb_{G^{\vee}} 
\to R_{G^{\vee}}, \qquad 
\gamma_{H^{\vee}} : \Vb_{H^{\vee}} 
\to R_{H^{\vee}}.$$ 
The map $\gamma_{H^{\vee}}$ is an isomorphism. We conjecture that 
$\gamma_{G^{\vee}}$ is injective, i.e.,
that $\Vb_{G^{\vee}}$ is a torsion free 
$R_{G^{\vee}}$-module. 
We denote by $\bar\Cb_{G^{\vee}}$ the quotient of 
$\mu(\Vb_{H^\vee})$ by its torsion submodule.
The proof of the following result is identical to that of 
Proposition~\ref{P:KHall}.

\vspace{.1in}

\begin{prop}\label{P:KHall2} There is a commutative diagram
$$\xymatrix{
\Vb_{H^\vee}\ar[r]^{\mu}\ar[d]_{\gamma_{H^\vee}}^-{\wr}&
\Vb_{G^\vee}\ar[d]^{\gamma_{G^\vee}}\cr
R_{H^\vee}\ar[r]^{\nu}&R_{G^\vee},}
$$
where the map $\nu$ is given by
\begin{equation}\label{E:KHallform2}
\nu(e^{\lambda^\vee})=\sum_{ w \in W} 
w\big( k_{\Delta^\vee}\;e^{\lambda^\vee}\big)
\end{equation}
where
\begin{equation}\label{E:Kg(z)2}
\begin{gathered}
k_{\Delta^\vee}=\prod_{\alpha \in \Delta^{\vee+}} k(e^{\alpha}), \cr k(z)=
(1-z)^{-1}(1-p^{-1}z^{-1})\prod_r(1-x_r^{-1}z)(1-y_r^{-1}z),
\end{gathered}
\end{equation}
and where $\Delta^{\vee +}$ is the set of positive roots of $\mathfrak{g}^\vee$.
\end{prop}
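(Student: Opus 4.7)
The plan is to follow the proof of Proposition~\ref{P:KHall} verbatim, substituting $G^\vee$ for $GL_n$ and its maximal torus $H^\vee$ for the diagonal torus throughout. First I would fix a Borel subgroup $B^\vee \subset G^\vee$ containing $H^\vee$, denote by $\mathfrak{b}^\vee$ its Lie algebra and by $\mathfrak{n}^\vee$ its nilpotent radical, and verify the analog of Lemma~\ref{lem:Comm}: that $T^*_{G^\vee}X' = C_{\mathfrak{g}^\vee}$, $T^*_{G^\vee}X = G^\vee \times_{B^\vee} C_{\mathfrak{h}^\vee}$, $Z = G^\vee \times_{B^\vee}\bigl((\mathfrak{b}^\vee)^g \times (\mathfrak{b}^\vee)^g\bigr)$, and $Z_{G^\vee} = G^\vee \times_{B^\vee} C_{\mathfrak{b}^\vee}$, where $X' = (\mathfrak{g}^\vee)^g \times (\mathfrak{g}^\vee)^g$ and $X = G^\vee \times_{B^\vee}\bigl((\mathfrak{h}^\vee)^g \times (\mathfrak{h}^\vee)^g\bigr)$. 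The argument of Lemma~\ref{lem:Comm} uses only standard properties of a parabolic subgroup inside a reductive group and a moment-map computation, so it carries over once one fixes a nondegenerate invariant bilinear form on $\mathfrak{g}^\vee$ (replacing the trace form used for $GL_n$) to identify $(\mathfrak{g}^\vee)^*$ with $\mathfrak{g}^\vee$ equivariantly.

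Next I would adapt the explicit computation of $\nu$. Since $C_{\mathfrak{h}^\vee} = (\mathfrak{h}^\vee)^g \times (\mathfrak{h}^\vee)^g$ is a vector space on which $H^\vee$ acts trivially, the classes of the equivariant line bundles $\mathcal{O}_{C_{\mathfrak{h}^\vee}}\langle \lambda^\vee \rangle$ with $\lambda^\vee \in P^\vee$ form an $R_s$-basis of $\Vb_{H^\vee}$, and $\gamma_{H^\vee}(\mathcal{O}_{C_{\mathfrak{h}^\vee}}\langle \lambda^\vee \rangle) = e^{\lambda^\vee}$; in particular $\gamma_{H^\vee}$ is an isomorphism, as the proposition asserts. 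To compute $\nu(e^{\lambda^\vee})$ I would invoke formula (\ref{CO:15}) exactly as in Section 2.5. Using the Koszul resolution of the inclusion $T^*_{G^\vee}X \subset T^*X$, one obtains $j_*(\mathcal{O}_{T^*_{G^\vee}X}(\lambda^\vee)) = \Gamma\bigl(e^{\lambda^\vee} \otimes \Lambda_{-1}(p\mathfrak{n}^\vee)^* \otimes \mathcal{O}_{\mathfrak{n}^\vee \times C_{\mathfrak{h}^\vee}}\bigr)$; the pullback $L\phi^*$ replaces $\mathcal{O}_{\mathfrak{n}^\vee \times C_{\mathfrak{h}^\vee}}$ by $\mathcal{O}_{(\mathfrak{b}^\vee)^g \times (\mathfrak{b}^\vee)^g}$; and Thomason localization after tensoring with $K_{G^\vee}$ reduces the whole expression to $R\pi_* \Gamma\bigl(e^{\lambda^\vee} \otimes \Lambda_{-1}(p\mathfrak{n}^\vee + \sum_r x_r (\mathfrak{n}^\vee)^* + \sum_r y_r (\mathfrak{n}^\vee)^*)^*\bigr)$.

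The last step is to integrate this class over the flag variety $G^\vee/B^\vee$ by restriction to its $H^\vee$-fixed point set, which is canonically indexed by the Weyl group $W$. At the identity coset the tangent space is $(\mathfrak{n}^\vee)^*$, whose $H^\vee$-weights are precisely the positive roots $\Delta^{\vee +}$, and the contribution at that fixed point combines into the factor $k_{\Delta^\vee} = \prod_{\alpha \in \Delta^{\vee +}} k(e^\alpha)$ with $k(z)$ as in (\ref{E:defk(z)}); summing the $w$-translates of this contribution over $w \in W$ yields formula (\ref{E:KHallform2}) and establishes commutativity of the square. The main obstacle will be the first step: verifying the analog of Lemma~\ref{lem:Comm}(b) for an arbitrary reductive group, since the proof in the $GL_n$ case uses the trace pairing and the explicit description of the Lie bracket inside $\mathfrak{gl}_r$ in a way that must be generalized. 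Once the identifications of $T^*_{G^\vee}X$ and $Z_{G^\vee}$ are in place, the remainder of the argument is a formally identical Thomason localization computation, and no feature specific to $GL_r$ intervenes.
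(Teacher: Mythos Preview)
Your proposal is correct and follows exactly the approach the paper intends: the paper's own proof of Proposition~\ref{P:KHall2} consists of the single sentence ``The proof of the following result is identical to that of Proposition~\ref{P:KHall},'' and your outline spells out precisely how that verbatim adaptation goes. Your remark that the only new ingredient is replacing the trace form by an arbitrary nondegenerate invariant form on $\mathfrak{g}^\vee$ is well taken and is the only point where anything beyond notation changes.
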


\vspace{.1in}

\noindent
Set $\mathbf{B}_{G^\vee}=\nu(R_{H^\vee})$.
By Proposition \ref{P:KHall2}, there is a unique isomorphism 
$j_{G^\vee}: \bar\Cb_{G^\vee} \to 
\mathbf{B}_{G^\vee}$ fitting in a commutative diagram
\begin{equation}\label{D:diagGKarp5}
\xymatrix{{\Vb}_{H^\vee} \ar[rr]^-{\mu}  
\ar[d]^-{\wr}& &\bar\Cb_{G^\vee}\ar[d]^-{j_{G^\vee}} \\
R_{H^\vee} \ar[rr]^-{\nu} & & \mathbf{B}_{G^\vee}. }
\end{equation}

\vspace{.2in}

\paragraph{\textbf{B.4.}} Identify $T_a$ with $T_s$ and $R_a$ with 
$R_s$ as in Section 3.1. Using (\ref{E:identification1}) we get an 
identification between the maps 
$\dot{\Psi}_{G}: R[P^\vee] \to \dot{\mathbf{A}}_{R,G}$ and 
$\nu: R_{H^\vee} \to \mathbf{B}_{G^\vee}$. Combining (\ref{D:diagGKarp45}) 
and (\ref{D:diagGKarp5}), we immediately get

\vspace{.1in}

\begin{theo} There is a canonical isomorphism 
$\Theta_{R,G} : \bar\Cb_{G^\vee}{\to}  \UU_{R,G}$ 
making the following diagram commutative
\begin{equation}\label{D:diagGKarp6}
\xymatrix{{\Vb}_{H^\vee} \ar[rr]^-{\mu}  \ar[d]^-{\wr}& &
\bar\Cb_{G^\vee}\ar[d]^-{\Theta_{R,G}} \\
\UU_{R,H} \ar[rr]^-{\dot{Ind}_H^G} & & \UU_{R,G}. }
\end{equation}
\end{theo}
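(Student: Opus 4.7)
The plan is to construct $\Theta_{R,G}$ by composing the two combinatorial identifications already established: diagram (\ref{D:diagGKarp45}), which identifies $\UU_{R,G}$ with the ``Eisenstein shuffle image'' $\dot{\mathbf{A}}_{R,G}\subset R[P^\vee]^{W}$ via $\dot{\iota}_G$, and diagram (\ref{D:diagGKarp5}), which identifies the spherical K-theoretic subalgebra $\bar\Cb_{G^\vee}$ with its image $\mathbf{B}_{G^\vee}\subset R_{G^\vee}$ via $j_{G^\vee}$. First I would transfer both pictures to the same ring: under (\ref{E:identification1}) and the identification $R_a\simeq R_s$ fixed in Section 3.1, we have $R_{H^\vee}=R[P^\vee]$ and $R_{G^\vee}=R[P^\vee]^{W}$. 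Then the composite
\[
\Theta_{R,G}=\dot{\iota}_G^{-1}\circ j_{G^\vee}
\]
is well-defined provided one checks that the shuffle map $\dot{\Psi}_G$ and the K-theoretic convolution $\nu$ land in the same subspace; this target equality is exactly the statement that $\dot{\mathbf{A}}_{R,G}=\mathbf{B}_{G^\vee}$.

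The heart of the matter is therefore a single computational step: under the identifications of Section 3.1, the rational function $\tilde{\zeta}(z)$ coincides with the kernel $k(z)$ of Proposition \ref{P:KHall2}. This was already verified in the course of Theorem \ref{thm:Main} (where we used $\tilde{\zeta}(z)=k(z)$ after sending $\a_i\mapsto x_i^{-1}$, $\bar\a_i\mapsto y_i^{-1}$, $q\mapsto p^{-1}$). Once this is in hand, one has an equality of operators
\[
\dot{\Psi}_G(e^{\lambda^\vee})
=\sum_{\sigma\in W}\sigma\Bigl(\prod_{\alpha\in\Delta^+}\tilde\zeta_X(e^{\alpha^\vee})\cdot e^{\lambda^\vee}\Bigr)
=\sum_{w\in W}w\bigl(k_{\Delta^\vee}\,e^{\lambda^\vee}\bigr)
=\nu(e^{\lambda^\vee}),
\]
which immediately gives $\dot{\mathbf{A}}_{R,G}=\mathbf{B}_{G^\vee}$ and makes $\Theta_{R,G}$ an isomorphism of $R$-modules.

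It remains to verify commutativity of (\ref{D:diagGKarp6}). By construction, the left vertical identification $\UU_{R,H}\simeq R[P^\vee]=R_{H^\vee}=\Vb_{H^\vee}$ (the last equality via $\gamma_{H^\vee}$) is the composite of the vertical arrows in (\ref{D:diagGKarp45}) and (\ref{D:diagGKarp5}). Commutativity of those two squares, combined with the equality $\dot{\Psi}_G=\nu$ established above, forces $\Theta_{R,G}\circ\mu=\dot{Ind}_H^G$ after the vertical identification, which is exactly the outer square of (\ref{D:diagGKarp6}).

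The step I would expect to require the most care is the matching of the two ``kernels'' $\tilde{\zeta}_X(z^{-1})$ (governing the twisted Eisenstein induction in $\UU_{R,G}$) and $k(z)$ (governing the K-theoretic convolution in $\bar\Cb_{G^\vee}$), together with the correct twist by $2(g-1)\rho^\vee$ built into $\dot{Ind}_H^G$; in the $GL_r$ case this was absorbed into the renormalization $Id'$ appearing in the proof of Theorem \ref{thm:Main}, and one should expect the same kind of shift by a power of $\prod_l(1-x_l)(1-y_l)$ here. Once that bookkeeping is done, the general reductive case is a purely formal consequence of Propositions~B.2 and \ref{P:KHall2}, exactly parallel to the $GL_r$ case.
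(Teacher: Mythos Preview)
Your approach is exactly the paper's: identify $R_a\simeq R_s$ as in Section~3.1, observe that under this identification $\tilde{\zeta}(z)=k(z)$ so that $\dot{\Psi}_G=\nu$ as maps $R[P^\vee]\to R[P^\vee]^W$, hence $\dot{\mathbf{A}}_{R,G}=\mathbf{B}_{G^\vee}$, and then define $\Theta_{R,G}=\dot{\iota}_G^{-1}\circ j_{G^\vee}$ by splicing diagrams (\ref{D:diagGKarp45}) and (\ref{D:diagGKarp5}).

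One small clarification regarding your final paragraph: the bookkeeping you anticipate (the $z\leftrightarrow z^{-1}$ swap and the renormalization by powers of $\prod_l(1-x_l)(1-y_l)$) is in fact \emph{not} needed here. In the appendix the sign and ordering conventions have already been adjusted relative to the $GL_n$ case (see the footnote in B.2), so $\dot{\Psi}_G$ is built directly from $\tilde{\zeta}_X(1_{\alpha^\vee})$ rather than $\tilde{\zeta}_X(1_{-\alpha^\vee})$, and Theorem~B.4 only asserts a canonical isomorphism compatible with induction, not a specific normalization on generators like Theorem~\ref{thm:Main} does. So $\dot{\Psi}_G=\nu$ on the nose and no analogue of $Id'$ enters.
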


\vspace{.2in}

\paragraph{\textbf{B.5.}} Fix a point $x \in X(\fq)$. 
There is an action $h_x$ of the representation ring $Rep_{G^\vee}$ on 
$AF_{X,G}$ by means of the Hecke operators at $x$,
see e.g., \cite[Section~2.3]{Frenkel}. There is a similar action of 
$Rep_{H^\vee}$ on $AF_{X,H}$, and we have 
\begin{equation}\label{E:compathecke}
\begin{split}
L \cdot Ind_H^G(f)=Ind(Res_{G^\vee}^{H^\vee} L \cdot f), \quad 
L \in Rep_{G^\vee}, \quad f \in AF_{X,H},\\
Res_{H^\vee}^{G^\vee}(L \cdot g)=Res_{G^\vee}^{H^\vee}(L) \cdot 
Res_{G^\vee}^{H^\vee}(g), \quad L \in Rep_{G^\vee}, \quad g \in AF_{X,G}.
\end{split}
\end{equation}
For $\lambda^\vee \in P^\vee$ let $e^{\lambda^\vee} \in Rep_{H^\vee}$ 
the class of the corresponding one-dimensional module. A direct computation 
shows that 
\begin{equation}\label{E:compathecke2}
e^{\lambda^\vee} \cdot 1_{\mu^{\vee}}=1_{\lambda^\vee + \mu^\vee}
=1_{\lambda^\vee} 1_{\mu^\vee},\quad\lambda^\vee,\mu^\vee\in P^\vee.
\end{equation}
The product on the left hand side is the Hecke action while the product 
on the right hand side is the multiplication in the group ring $\CC[P^\vee]$. 
Note that the Hecke action $h_x$ on $\UU_{X,H}$ and hence on $\UU_{X,G}$ 
is independent of the choice of the point $x$.
On the other hand, there is a natural action $\rho$ of the representation ring 
$R_{G^\vee}$ on $\bar\Cb_{G^\vee}$ by tensor product. 

\begin{theo} The isomorphism $\Theta_{R,G}$ intertwines the above actions of 
$Rep_{G^\vee}$ on $\bar\Cb_{G^\vee}$ and $\UU_{R,G}$ respectively, 
i.e., we have $\Theta_{R,G} \circ \rho(L)  = h_x(L) \circ \Theta_{R,G}$ for any 
$L \in Rep_{G^\vee}$.
\end{theo}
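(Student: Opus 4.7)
The plan is to reduce the identity $\Theta_{R,G}\circ\rho(L)=h_x(L)\circ\Theta_{R,G}$ to the compatibility (\ref{E:compathecke}) of Hecke operators with parabolic induction, by combining a projection formula for the convolution map $\mu$ on the K-theory side with the commutative diagram (\ref{D:diagGKarp6}) relating the two sides.

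First, since both operations are $R$-linear and $\bar\Cb_{G^\vee}$ is generated as an $R$-module by classes of the form $\mu(x)$ with $x\in\Vb_{H^\vee}$, it is enough to check the equality $\Theta_{R,G}(L\cdot\mu(x))=h_x(L)(\Theta_{R,G}(\mu(x)))$ for every such $x$ and every $L\in Rep_{G^\vee}$. By (\ref{D:diagGKarp6}) the right hand side equals $h_x(L)(\dot{Ind}_H^G(\tau(x)))$, where $\tau\colon\Vb_{H^\vee}\simeq\UU_{R,H}$ is the canonical identification $R[P^\vee]\simeq\UU_{R,H}$, $e^{\lambda^\vee}\mapsto 1_{\lambda^\vee}$.

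The next ingredient is a projection formula for $\mu$. Recall from Section~2 that $\mu$ is the composition of the induction isomorphism $K^{T_s\times H^\vee}(C_{\hen^\vee})\simeq K^{T_s\times G^\vee}(T^*_{G^\vee}X)$ with $R\psi_{G,*}\circ L\phi_G^*$, in which $\psi_G$ is proper and $\phi_G$ has finite tor-dimension. For $L\in R_{G^\vee}$, viewed as a class on every $G^\vee$-space through pullback from the point, the pullbacks of $L$ along $\psi_G$ and along $\phi_G$ agree on $Z_{G^\vee}$, and under the chain
$K^{T_s\times G^\vee}(Z_{G^\vee})\simeq K^{T_s\times P^\vee}(C_{\pen^\vee})\simeq K^{T_s\times H^\vee}(C_{\hen^\vee})$ this common pullback corresponds to $Res_{G^\vee}^{H^\vee}(L)\in R_{H^\vee}$ acting by tensor product. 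Combining the ordinary projection formula for the proper morphism $\psi_G$ with the multiplicativity of $L\phi_G^*$ and of the induction isomorphism then yields
\begin{equation*}
L\otimes\mu(x)=\mu\bigl(Res_{G^\vee}^{H^\vee}(L)\otimes x\bigr).
\end{equation*}
Applying $\Theta_{R,G}$ and (\ref{D:diagGKarp6}) once more gives
\begin{equation*}
\Theta_{R,G}(L\cdot\mu(x))=\dot{Ind}_H^G\bigl(\tau(Res_{G^\vee}^{H^\vee}(L))\cdot\tau(x)\bigr),
\end{equation*}
where the product on the right is taken in the group ring $\CC[P^\vee]\subset\UU_{R,H}$.

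It remains to transport the Hecke action across $\dot{Ind}_H^G$. By (\ref{E:compathecke2}) the right hand product is precisely the Hecke action of $Res_{G^\vee}^{H^\vee}(L)\in Rep_{H^\vee}$ on $\tau(x)$; the compatibility (\ref{E:compathecke}) then gives $Ind_H^G\circ h_x(Res_{G^\vee}^{H^\vee}(L))=h_x(L)\circ Ind_H^G$ on $\UU_{R,H}$. The twist defining $\dot{Ind}_H^G$, namely multiplication by $1_{2(g-1)\rho^\vee}$ in $\UU_{R,H}$, commutes with the Hecke action there because both are given by multiplication in the commutative ring $\CC[P^\vee]$; hence the same intertwining property passes to $\dot{Ind}_H^G$, and the required identity follows. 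I expect the main obstacle to be the careful verification of the projection formula for $\mu$: one has to identify the pullback of $L\in R_{G^\vee}$ along the two legs $\phi_G$ and $\psi_G$ of the correspondence $Z_{G^\vee}$, and match this class, via the Levi reduction from $P^\vee$- to $H^\vee$-equivariant K-theory on $C_{\hen^\vee}$, with the restriction $Res_{G^\vee}^{H^\vee}(L)$.
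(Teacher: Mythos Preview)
Your proof is correct and follows essentially the same strategy as the paper: both arguments reduce to showing that every map in the chain defining $\Theta_{R,G}$ is $R_{G^\vee}$-linear (where $R_{G^\vee}$ acts on the $H^\vee$-side through the embedding $R_{G^\vee}\subset R_{H^\vee}$), and both invoke (\ref{E:compathecke}) and (\ref{E:compathecke2}) on the automorphic side.

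The one methodological difference worth noting is in how you establish $R_{G^\vee}$-linearity of $\mu$. You argue geometrically via a projection formula for the correspondence $Z_{G^\vee}$, which requires tracking the pullback of $L$ along both legs and matching it with $Res_{G^\vee}^{H^\vee}(L)$ under Levi reduction---exactly the step you flag as the main obstacle. The paper bypasses this entirely: since $\bar\Cb_{G^\vee}$ is by definition torsion-free, the restriction $\gamma_{G^\vee}$ is injective on it, so it suffices to check $R_{G^\vee}$-linearity after applying $\gamma_{G^\vee}$. But $\gamma_{G^\vee}\circ\mu=\nu\circ\gamma_{H^\vee}$, and $\nu$ is visibly $R_{G^\vee}$-linear from its explicit formula (\ref{E:KHallform2}) (the kernel $k_{\Delta^\vee}$ is $W$-symmetrized, so multiplying the input by a $W$-invariant element simply multiplies the output by the same element). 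This shortcut trades your geometric verification for a one-line inspection of the symmetrization formula, at the cost of relying on the torsion-free quotient rather than proving the identity on $\Vb_{G^\vee}$ itself. Your route is slightly more robust (it works before killing torsion), the paper's is slightly quicker.
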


\begin{proof}
The restriction map $\gamma_{G^\vee} : \Vb_{G^\vee}\to R_{G^\vee}$ 
is $R_{G^\vee}$-linear. Likewise, the map
$\gamma_{H^\vee} : \Vb_{H^\vee}\to R_{H^\vee}$ is $R_{H^\vee}$-linear. 
Letting $R_{G^\vee}$ act on 
$\Vb_{H^\vee}$ and 
$R_{H^\vee}$ by means of the embedding $R_{G^\vee} \subset R_{H^\vee}$, we see
from (\ref{E:KHallform2}) that $\mu, \nu$ are $R_{G^\vee}$-linear as well, 
and hence that $j_{G^\vee}$ is $R_{G^\vee}$-linear.
By (\ref{E:compathecke2}) the identification 
$\UU_{R,H} \simeq R[P^\vee]$ is $R_{H^\vee}$-linear. 
Let $R_{G^\vee}$ act on $R[P^\vee]$ and $\UU_{R,H}$ by means of the embedding 
$R_{G^\vee} \subset R_{G^\vee}$. By construction, the map $\dot{\Psi}_G$ is 
$R_{G^\vee}$-linear. By (\ref{E:compathecke}), the map $\dot{Ind}^G_H$ is also 
$R_{G^\vee}$-linear. But then $\dot{\iota}_G$ is $R_{G^\vee}$-linear as well. 
We have shown that $j_{G^\vee}$ and $\dot{\iota}_G$ are $R_{G^\vee}$-linear. 
The identification $\dot{\mathbf{A}}_G \simeq \mathbf{B}_{G^\vee}$ is 
obviously $R_{G^\vee}$-linear. The theorem follows.
\end{proof}

\vspace{.2in}

\paragraph{\textbf{B.6.}} To finish we describe, as in Section~3.3, 
the image under the isomorphism 
$\Theta_{K,G}$ of the skyscraper sheaf
$[\mathcal{O}_{\{0\}}] \in 
\Vb_{G^\vee}\otimes_R K$. 
The proof is the same as in 
the case of $\mathfrak{g}=\mathfrak{gl}_r$, see Proposition~\ref{P:cohtriv}. 
Let us denote by $\mathcal{B}=G/B$ the flag variety of $G$.

\begin{prop}\label{P:cohtrivgeneral} We have
\begin{equation}\label{E:cohtrivG}
\Theta_{K,G}([\mathcal{O}_{\{0\}}])=
(-1)^{dim\;\mathcal{B}}\frac{q^{-g\;dim \mathcal{B}}}{\#\mathcal{B}(\fq)} \dot{Ind}_H^G \bigg(1_{-2\rho^\vee} \prod_{\substack{\sigma \in \Delta^+ }}
\prod_{l=1}^g (1-\a_l 1_{\sigma^\vee})(1-\bar\a_l1_{\sigma^\vee})\bigg)
\end{equation}
where $\Delta^+$ is the set of positive roots of $(G,B)$.
\end{prop}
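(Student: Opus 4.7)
The plan is to mimic the proof of Proposition~\ref{P:cohtriv} verbatim, substituting root-system-level data for $G$ in place of the $GL_r$-specific combinatorics. Three ingredients are needed: the image of $[\mathcal{O}_{\{0\}}]$ under the localization map $\gamma_{G^\vee}$, an explicit preimage under the shuffle map $\nu$ of Proposition~\ref{P:KHall2}, and the commutative diagram (\ref{D:diagGKarp6}).

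First, I would compute $\gamma_{G^\vee}([\mathcal{O}_{\{0\}}])$. Pulling back to the origin via the Koszul resolution of $\{0\}$ inside the ambient vector space $(\mathfrak{g}^\vee)^g\oplus(\mathfrak{g}^\vee)^g$, together with the weight decomposition $\mathfrak{g}^\vee=\mathfrak{h}^\vee\oplus\bigoplus_{\a\in\Delta^\vee}\mathfrak{g}^\vee_\a$ under $H^\vee$, yields
$$\gamma_{G^\vee}([\mathcal{O}_{\{0\}}])=\Lambda_{-1}(x\mathfrak{g}^\vee\oplus y\mathfrak{g}^\vee)=\prod_{l=1}^g(1-x_l)^{\mathrm{rk}}(1-y_l)^{\mathrm{rk}}\prod_{\a\in\Delta^\vee}\kappa(e^\a),$$
where $\mathrm{rk}=\dim\mathfrak{h}^\vee$ and $\kappa(z)=\prod_l(1-x_lz)(1-y_lz)$.

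Next, I would exhibit a preimage under the shuffle map $\nu$. The form of the statement forces the candidate
$$Q=e^{-2\rho^\vee}\prod_{\beta\in\Delta^{\vee+}}\prod_l(1-x_l^{-1}e^\beta)(1-y_l^{-1}e^\beta)$$
(possibly after swapping $e^\beta\leftrightarrow e^{-\beta}$ in one of the two families so that no squared factor appears when combined with $k_{\Delta^\vee}$). The computation of $\nu(Q)=\sum_{w\in W}w(k_{\Delta^\vee}Q)$ then splits naturally. The $k(e^\beta)$-factors combined with the matching factors of $Q$ produce, via the manipulation $\prod_l(1-x_l^{-1}u)(1-x_l^{-1}u^{-1})=x_l^{-2}\prod_l(1-x_lu)(1-x_lu^{-1})$ and the relation $x_ly_l=p=q^{-1}$, a Weyl-invariant product equal to $q^{2g|\Delta^{\vee+}|}\prod_{\a\in\Delta^\vee}\kappa(e^\a)$. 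The remaining non-invariant rational factor is handled by the classical Macdonald--Poincar\'e identity
$$\sum_{w\in W}w\bigg(\prod_{\beta\in\Delta^{\vee+}}\frac{1-qe^{-\beta}}{1-e^{-\beta}}\bigg)=\sum_{w\in W}q^{l(w)}=\#\mathcal{B}(\fq),$$
combined with the Weyl denominator rewriting $\prod_\beta(1-e^\beta)=(-1)^{|\Delta^{\vee+}|}e^{2\rho^\vee}\prod_\beta(1-e^{-\beta})$. Together these show that $\nu(Q)$ is an explicit scalar multiple of $\gamma_{G^\vee}([\mathcal{O}_{\{0\}}])$.

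Finally, I would conclude via diagram (\ref{D:diagGKarp6}): $\Theta_{K,G}\circ\mu=\dot{Ind}_H^G$ under the identification $\Vb_{H^\vee}\simeq\UU_{K,H}$, which as in the $GL_r$ case differs from $\gamma_{H^\vee}$ by the normalizing factor $\prod_l(1-x_l)^{\mathrm{rk}}(1-y_l)^{\mathrm{rk}}$ that converts $[\mathcal{O}_{\{0\}}]$-type classes to $\mathbf{1}^{ss}$-type generators. Assembling the constants yields $\Theta_{K,G}([\mathcal{O}_{\{0\}}])=c\cdot\dot{Ind}_H^G(Q)$ with $c=(-1)^{\dim\mathcal{B}}q^{-g\dim\mathcal{B}}/\#\mathcal{B}(\fq)$. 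The main technical obstacle is precisely this bookkeeping of signs, of powers of $q$, and of the normalizing $\prod_l(1-x_l)^{\mathrm{rk}}(1-y_l)^{\mathrm{rk}}$-factor needed to reproduce the exact prefactor in (\ref{E:cohtrivG}); the conceptual core, namely the Macdonald--Poincar\'e identity, is classical and causes no difficulty.
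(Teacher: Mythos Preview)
Your proposal is correct and follows exactly the route the paper intends: the paper's own proof consists of the single sentence ``The proof is the same as in the case of $\mathfrak{g}=\mathfrak{gl}_r$, see Proposition~\ref{P:cohtriv},'' and your three steps (Koszul localization, preimage under $\nu$ via a Weyl-symmetrization identity, conclusion through diagram~(\ref{D:diagGKarp6})) are precisely the generalization of that argument. You have moreover made explicit the one point the paper leaves tacit even in the $GL_r$ case, namely that the ``direct computation'' producing $[r]!$ in~(\ref{E:triv1}) is the type-$A$ instance of the Macdonald--Poincar\'e formula $\sum_{w\in W} w\big(\prod_{\beta\in\Delta^{+}}\frac{1-qe^{-\beta}}{1-e^{-\beta}}\big)=\#\mathcal{B}(\fq)$, which is exactly what is needed for general $G$.
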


\vspace{.2in}

\small{}

\vspace{4mm}

\noindent
O. Schiffmann, \texttt{olive@math.jussieu.fr},\\
D\'epartement de Math\'ematiques, Universit\'e de Paris 6, 175 rue du Chevaleret, 75013 Paris, FRANCE.

\vspace{.1in}

\noindent
E. Vasserot, \texttt{vasserot@math.jussieu.fr},\\
D\'epartement de Math\'ematiques, Universit\'e de Paris 7, 175 rue du Chevaleret, 75013 Paris, FRANCE.


\begin{thebibliography}{99}

\bibitem[BS]{BS}
I. Burban, O. Schiffmann, \emph{On the Hall algebra of an elliptic curve, I.}, preprint math.AG/0505148, (2005).

\bibitem[CG]{CG}
N. Chriss, V. Ginzburg, \emph{Representation theory and complex geometry}, Birkha\" user (1996).

\bibitem[D]{De}
P. Deligne, \emph{Constantes des \'equations fonctionelles des fonctions $L$}, in: Lecture Notes in Math., \textbf{349}, p. 501--597, Springer-Verlag (1973).

\bibitem[FJMM]{Feigin}
B. Feigin, M. Jimbo, T. Miwa, E. Mukhin, \emph{Symmetric polynomials vanishing on the shifted diagonals and Macdonald polynomials}, Int. Math. Res. Not. 2003, no. \textbf{18}, 1015--1034. 

\bibitem[FO]{FO}
B. Feigin, A. Odesskii, \emph{Vector bundles on an elliptic curve and Sklyanin algebras}, Topics in quantum groups and finite-type invariants, 65--84, Amer. Math. Soc. Transl. Ser. 2, \textbf{185}, (1998).

\bibitem[FT]{FT}
B. Feigin, A. Tsymbaliuk, \emph{Heisenberg action in the equivariant $K$-theory of Hilbert schemes via Shuffle Algebra}, preprint arXiv:0904.1679 (2009).

\bibitem[F1]{Fratila}
D. Fratila, \emph{On the whole Hall algebra of an elliptic curve}, in preparation.

\bibitem[F2]{Frenkel}
E. Frenkel, \emph{ Lectures on the Langlands program and conformal field theory}, Frontiers in number theory, physics, and geometry. II, 387--533, Springer, Berlin, (2007).

\bibitem[FGV]{FGV}
E. Frenkel, D. Gaitsgory, K. Vilonen, \emph{On the geometric Langlands conjecture},
J. Amer. Math. Soc. \textbf{15} (2002), no. 2, 367--417.

\bibitem[Ga]{Gaitsgory}
D. Gaitsgory, \emph{Automorphic sheaves and Eisenstein series}, Thesis, Tel-Aviv University (1997).

\bibitem[Gr]{Groj}
I. Grojnowski, \emph{ Instantons and affine algebras I: the Hilbert scheme and vertex operators}, Math. Res. Letters \textbf{3} (1996), 275--291.

\bibitem[H]{Casselmann}
G. Harder, \emph{Chevalley Groups over Function Fields and Automorphic
Forms}, Ann. Math., Vol. \textbf{100}, No. 2, pp. 249-306

\bibitem[HN]{HN}
G. Harder, M.S. Narasimhan,
\emph{On the cohomology groups of moduli spaces of vector bundles on curves}, 
Math. Ann. \textbf{212} (1974/75), 215--248.

\bibitem[K]{Kap}
M. Kapranov, \emph{Eisenstein series and quantum affine algebras},
Algebraic geometry, \textbf{7}. J. Math. Sci. (New York) \textbf{84} (1997), no. 5, 1311--1360.

\bibitem[L1]{GindiKarp}
R. P. Langlands, \emph{Euler products}, A James K. Whittemore Lecture in Mathematics given at 
Yale University, 1967. Yale Mathematical Monographs, 1. Yale University Press, New Haven, 
Conn.-London, 1971. 

\bibitem[L2]{Lascoux}
A. Lascoux, \emph{Symmetric functions and combinatorial operators on polynomials}, CBMS Regional Conference Series in Mathematics, \textbf{99}, (2003).

\bibitem[L3]{Laumon}
G. Laumon, \emph{Faisceaux automorphes asoci\'ees aux s\'eries d'Eisenstein}, Automorphic forms, Shimura varieties, and $L$-functions, Vol. I (Ann Arbor, MI, 1988),  227--281.

\bibitem[L4]{LaumonICM}
G. Laumon, \emph{La transformation de Fourier g\'eom\'etrique et ses applications},  Proceedings of the International Congress of Mathematicians, Vol. I, II (Kyoto, 1990),  437--445, Math. Soc. Japan, Tokyo, 1991.

\bibitem[L5]{Lusbook}
G. Lusztig, \emph{Introduction to quantum groups}, Birkha\"user, (1992).

\bibitem[M1]{Mac}
I.G. Macdonald , \emph{Symmetric functions and Hall polynomials, second edition}, Oxford Math. Mon., (1995).

\bibitem[M2]{Milne}
J.S. Milne, \emph{Abelian varieties}, in Arithmetic geometry (Storrs, Conn., 1984),  103--150, Springer, New York, (1986).

\bibitem[R1]{Ringel}
C. Ringel, \emph{Hall algebras and quantum groups}, Invent. Math. \textbf{101}  (1990),  no. 3, 583--591.

\bibitem[R2]{Rosso}
M. Rosso, Marc, \emph{Quantum groups and quantum shuffles},  Invent. Math. \textbf{133} (1998), no. 2, 399--416.
 
\bibitem[SGA6]{SGA6}
P. Berthelot, A. Grothendieck, A. Illusie,
\emph{Th\'eorie des intersections et Th\'eor\`eme de Riemann-Roch},
Lect. Notes Math. \textbf{162} (1971).

\bibitem[S1]{SLectures}
O. Schiffmann, \emph{Lectures on Hall algebras I, II}, preprint arXiv:0611617, to appear in Comptes-Rendus de l'\'ecole doctorale de Grenoble, (2006).

\bibitem[S2]{Ssemistables}
O. Schiffmann, \emph{Spherical Hall algebras of curves and Harder-Narasimhan stratas}, preprint arXiv:1002.0987 (2010).

\bibitem[S3]{Scano}
O. Schiffmann, \emph{On the Hall algebra of an elliptic curve, II}, preprint arXiv:math/0508553 (2005).

\bibitem[SV1]{SV1}
O. Schiffmann, E. Vasserot, \emph{The elliptic Hall algebra, Cherednick Hecke algebras and Macdonald polynomials}, preprint arXiv:0802.4001, to appear in Compositio, (2008).

\bibitem[SV2]{SV2}
O. Schiffmann, E. Vasserot, \emph{The elliptic Hall algebra and the K-theory of the Hilbert scheme of $\mathbb{A}^2$}, preprint arXiv:0905.2558 (2009).

\bibitem[T]{T}
R.W. Thomason, \emph{Les K-groupes d'un sch\'ema \'eclat\'e et une
formule d'intersection exc\'edentaire},
Invent. Math. \textbf{112} (1993),
195--215.

\bibitem[V]{V}
E. Vasserot, \emph{Sur l'anneau de cohomologie du sch\'ema de Hilbert de $\bold C\sp 2$},. C. R. Acad. Sci. Paris S\'er. I Math. \textbf{332} (2001), no. 1, 7--12.

\bibitem[VV]{VV}
M. Varagnolo, E. Vasserot, \emph{On the K-theory of the cyclic quiver variety.}  Internat. Math. Res. Notices (1999), no. \textbf{18}, 1005--1028.

\end{thebibliography}
\end{document}